\newtheorem{theorem}{Theorem}[section]
\newtheorem{proposition}[theorem]{Proposition}
\newtheorem{lemma}[theorem]{Lemma}
\newtheorem{corollary}[theorem]{Corollary}
\theoremstyle{definition}
\newtheorem{definition}[theorem]{Definition}
\newtheorem{example}[theorem]{Example}
\newtheorem{remark}[theorem]{Remark}
\newtheorem{construction}[theorem]{Construction}
\renewenvironment{proof}{{\noindent\bf Proof.}}{\hfill $\Box$\par\vskip3mm}
\begin{document}

\def\cC{{\mathcal{C}}}
\def\cE{{\mathcal{E}}}
\def\cP{{\mathcal{P}}}
\def\cF{{\mathcal{F}}}
\def\cS{{\mathcal{S}}}
\def\cH{{\mathcal{H}}}
\def\cA{{\mathcal{A}}}
\def\cM{{\mathcal{M}}}
\def\cG{{\mathcal{G}}}
\def\cD{{\mathcal{D}}}

\def\vG{{\vec{{\mathcal{G}}}}}
\def\vGp{{\vec{{\mathcal{G}'}}}}

\title[Filtered screens and augmented Teichm\"uller space]
{Filtered screens and augmented Teichm\"uller space}

\author{Douglas J. LaFountain}
\address{Department of Mathematics \\ Western Illinois University}
\email{d-lafountain@wiu.edu}

\author{R.C. Penner}
\address{IHES\\ {\small and} Departments of Mathematics and Theoretical Physics\\ Caltech}
\email{rpenner@caltech.edu}

\keywords{Riemann surfaces, decorated Teichm\"uller theory, Riemann moduli space}

\thanks{Both authors
have been supported by the Center 
for Quantum Geometry of Moduli Spaces
funded by the Danish National Research Foundation.
It is furthermore a pleasure to thank Scott Wolpert for helpful input.}

\begin{abstract}  We study a new bordification of the decorated Teichm\"uller space for a multiply punctured surface $F$ by a space of {\it filtered screens} on the surface that arises from a natural elaboration of earlier
work of McShane-Penner.  We identify necessary and sufficient conditions for paths in this space of filtered screens to yield short curves having vanishing length in the underlying surface $F$.  As a result, an appropriate quotient of this space of filtered screens on $F$ yields a {\it decorated augmented Teichm\"uller space} which is shown to admit a  
CW decomposition that naturally projects to the augmented Teichm\"uller space by forgetting decorations and whose strata are indexed by a new object termed {\it partially oriented stratum graphs}.  
\end{abstract}

\maketitle

%*********************************************************
\section{Introduction}
\label{sec:intro}

Fatgraphs have proven to be useful tools for a number 
of enumerative problems related to the homology of
Riemann's moduli space \cite{AC,Harer86,HZ,Igusa1,Igusa2,Kontsevich,Mondello,Pert,Pen93b}.  A primary reason for this is that the decorated Teichm\"uller space of a punctured surface, where the decorations correspond to lengths of horocycles about each puncture, admits a mapping class group-invariant simplicial decomposition where simplices are indexed by isotopy classes of fatgraphs, with one barycentric coordinate per edge of the corresponding fatgraph in any given simplex.

A natural way to construct a mapping class group-invariant bordification of decorated Teichm\"uller space is to consider simplices corresponding to fatgraphs, and allow sub-fatgraphs of simplicial coordinates to vanish.  The results are then interpreted as nodal surfaces, where irreducible components are component sub-fatgraphs whose simplicial coordinates all vanish at the same rate.  The resulting bordification is a space of partially paired fatgraphs having punctures paired at nodes, and admits a CW decomposition where cells are products of simplices corresponding to irreducible component fatgraphs that necessarily have both decorated and undecorated punctures.

The idea of this combinatorial construction is not new, having appeared in some form in earlier published works of Kontsevich and Looijenga \cite{Kontsevich, [L]}.  However, what is new in the current work is that we relate the topology of this combinatorial space of partially paired fatgraphs directly to the topology of the well-studied augmented Teichm\"uller space, wherein short curves are allowed to have zero length.  Specifically, the above combinatorial construction begs a key question which has not been asked by previous authors:  In allowing sub-fatgraphs of simplicial coordinates to vanish, how does the resulting combinatorial path in the space of partially paired fatgraphs relate to the induced path of hyperbolic structures in augmented Teichm\"uller space? Specifically, are the relative boundaries of the vanishing sub-fatgraphs in actuality the short curves resulting in nodal double points in the augmented Teichm\"uller space, or are there other short curves which arise?  In fact we do show that these relative boundaries of vanishing sub-fatgraphs are precisely the short curves arising at limiting points in augmented Teichm\"uller space; this is the content of our first main Theorem \ref{thm:FSversionMcShanePenner}. The proof of this is non-trivial and is the main goal of our Section \ref{sec:Screen} which concerns {\it screens} and {\it filtered screens} on fatgraphs.  In brief, what is required is to relate the asymptotics of local simplicial coordinates to the asymptotics of global lambda lengths (or Penner coordinates) which relate directly to the hyperbolic lengths of embedded arcs in the punctured surface, and which can thus be used to identify precisely the short curves by applying previous work of McShane-Penner \cite{[MP]}. In this sense the current work is a sequel to, and completion of, the McShane-Penner paper

This leads to the second new aspect of our work in Section \ref{sec:DecaugTeich} which is to describe an appropriate quotient of the space of filtered screens which yields a {\it decorated augmented Teichm\"uller space}.  This space is shown to admit a mapping class group-invariant CW decomposition, the description of which is the content of our second main Theorem \ref{thm:maincelldecomp}.   An important question is whether the quotient of this space by the mapping class group has the same homotopy type as the Deligne-Mumford compactification of Riemann's moduli space, $\bar{\mathcal M}(F)$.  This topic has received much attention \cite{Gadgil}, and just recently a newly published manuscript of Z\'u\~niga presents a positive answer to the question of homotopy equivalence \cite{Zuniga}.  An open problem then remains as to determine how this quotient space could be potentially used to find explicit non-trivial cycles with which to broaden our understanding of the homology of $\bar{\mathcal M}(F)$.

The structure of the paper is as follows:  Section \ref{sec:background} serves to review needed aspects of decorated Teichm\"uller theory, in particular as it pertains to punctured surfaces where proper subsets of the punctures are decorated.  Section \ref{sec:Screen} introduces {\em filtered screens} on fatgraphs, and establishes the fact that filtered screens are necessary and sufficient to identify short curves with vanishing hyperbolic length for paths in decorated Teichm\"uller space.  It is thus filtered screens with which a bordification of decorated Teichm\"uller space, namely the {\em space of filtered screens} $\cF\cS(F)$, is constructed.  In Section \ref{sec:DecaugTeich}, an appropriate quotient of $\cF\cS(F)$ then yields the decorated augmented Teichm\"uller space $\widehat T(F)$, whose CW decomposition is succinctly described using the combinatorial objects {\it partially oriented stratum graphs}.

%--------------------------------------------------------

%*********************************************************
\section{Background and notation}
\label{sec:background}

This section recalls the decorated Teichm\"uller theory of punctured surfaces from \cite{[MP],[P]} which is systematically treated among other topics in the monograph \cite{[P3]}, to which we refer for further detail.

\subsection{Decorated Teichm\"uller Spaces}
The Teichm\"uller space 
$T(F)$ of $F=F_g^s$ is made up of all conjugacy classes of
discrete and faithful representations of $\pi_1(F)$ in $PSL_2(\mathbb{R})$ so that peripheral elements map to parabolics.  
The decorated Teichm\"uller space is the trivial bundle 
$$\widetilde{T}(F)=T(F) \times \mathbb{R}_{>0}^s\to T(F),$$ where the fiber $\mathbb R_{>0}^s$ over a point is interpreted geometrically as an $s$-tuple of hyperbolic lengths of not necessarily embedded horocycles, one about each puncture.  

\begin{definition}[Decorated and undecorated punctures, $\widetilde{T}_{\mathcal P}(F)$]
Consider a non-empty collection $\mathcal{P}$ of punctures on $F$, which we will refer to as {\em decorated punctures}, with an assignment of length of horocycle the associated {\em decorations}; any puncture not in $\mathcal{P}$ will be said to be {\em undecorated}.  The ${\mathcal P}$-{\em decorated Teichm\"uller space} is then defined to be
$\widetilde{T}_{\mathcal P}(F)=T(F) \times \mathbb{R}_{>0}^{{\mathcal P}}$.
\end{definition}

The union
$\widetilde T_*(F)=\sqcup_{{\mathcal P}\neq\emptyset}~ \widetilde{T}_{\mathcal P}(F)$ inherits a topology from
$T(F)\times {\mathbb R}_{\geq 0}^s$, where a horocycle is taken to be absent if it has length zero.
The mapping class group $MC(F)$ of isotopy classes of orientation-preserving homeomorphisms of $F$ acts on $T(F)$ by push forward of metric and on $\widetilde T(F)$ and $\widetilde T_*(F)$ by push forward of metric and decoration whereas only the subgroup of $MC(F)$ setwise fixing ${\mathcal P}$ acts on $\widetilde T_{\mathcal P}(F)$.

It is often convenient to projectivize decorations, for which we introduce the
following notation. For each $m \geq 0$, let $P\mathbb{R}_{\geq 0}^{m+1}$ denote  the standard closed $m$-simplex in $\mathbb{R}_{\geq 0}^{m+1}$ with barycentric coordinates $t_i\geq 0$, for $i=1,\ldots, m+1$, satisfying $\sum_{i=1}^{m+1} t_i = 1$.
Define {\em projectivized decorated Teichm\"{u}ller space}  $P\widetilde{T}(F) = \widetilde T(F)/{\mathbb R}_{>0}\approx T(F)\times P\mathbb{R}_{>0}^s$ and likewise for $P\widetilde{T}_{\mathcal P}(F) \approx T(F)\times P\mathbb{R}_{>0}^{\cP}$ and
$P\widetilde{T}_*(F) \approx T(F)\times P\mathbb{R}_{\geq0}^s$,
where in each case
we decorate punctures with a projectivized tuple of hyperbolic lengths of all horocycles summing to one.

\begin{definition}[Ideal arc, quasi cell decomposition]
An {\em ideal arc} $e$ in $F$ is (the isotopy class of) an arc embedded in $F$ with its endpoints at the punctures.  A 
{\it quasi cell decomposition} or q.c.d.\  of $F$ is a collection of ideal arcs pairwise disjointly embedded in $F$ except perhaps at their endpoints, no two of which are parallel,  so that each complementary component is either a polygon or an exactly once-punctured polygon.  A q.c.d.\ is said to be {\it based} at the collection of punctures arising as endpoints of its arcs, with these punctures regarded as the subset of decorated punctures $\mathcal{P}$.  
\end{definition}

A q.c.d.\ is in particular
an {\it ideal cell decomposition} or i.c.d.\ if each complementary region is unpunctured.  A maximal i.c.d.\  (and q.c.d.\ respectively) 
has only complementary triangles (as well as once-punctured monogons) and is 
called an {\it ideal triangulation} (and a {\it quasi triangulation}). 

\subsection{Coordinates and Parameters}
In this section we review the definitions of {\em lambda lengths, h-lengths} and {\em simplicial coordinates}.

\begin{definition}[Lambda lengths]
Associated to $\widetilde{\Gamma} \in \widetilde{T}(F)$ and an ideal arc $e$ in $F$, we may assign a positive real number called its {\em lambda length} or {\it Penner coordinate} as follows:  Straighten $e$ to a $\Gamma$-geodesic  connecting punctures in $F$, truncate this bi-infinite geodesic  to a finite geodesic arc using the horocycles from $\widetilde{\Gamma}$, let $\delta$ denote the signed hyperbolic length of this truncated geodesic (taken with positive sign if and only if the horocycles are disjoint), and set 
$$\lambda(e;\widetilde{\Gamma}) = \exp{\delta\over 2}.$$ When $\tilde\Gamma\in\widetilde T(F)$ is fixed or understood, we may write simply $\lambda(e)$.
\end{definition}

\begin{theorem}
\label{thm:lambdacoords} {\rm [Theorem 2.2.5 of \cite{[P3]}]}
For any quasi triangulation $E$ of $F$ based at a set ${\mathcal P}\neq\emptyset$ of punctures, the natural mapping $$\Lambda_E:\widetilde{T}_{\mathcal P}(F)\rightarrow \mathbb{R}_{>0}^{E}$$ given by
\[
\widetilde{\Gamma} \mapsto (e \mapsto \lambda(e;\widetilde{\Gamma}))
\]
is a real-analytic homeomorphism.
\end{theorem}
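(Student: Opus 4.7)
The plan is to reduce to the classical lambda-length coordinatization of $\widetilde{T}(F)$ on an ideal triangulation (the case in which $\mathcal{P}$ is the full puncture set) and then to peel off the horocycle data at the undecorated punctures by a fibered argument. Let $p_1, \ldots, p_M$ denote the undecorated punctures, where $M = s - |\mathcal{P}|$. Since $E$ is a quasi triangulation based at $\mathcal{P}$, each $p_i$ lies in the interior of a unique complementary once-punctured monogon, whose boundary is an edge $e_i \in E$ with both endpoints at some decorated vertex $v_i \in \mathcal{P}$. For each such monogon, adjoin an ideal arc $f_i$ from $v_i$ to $p_i$ running through the interior; this subdivides the monogon into a single ideal triangle whose boundary traverses $e_i$ once and $f_i$ twice. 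An Euler-characteristic count shows that $E' := E \cup \{f_1, \ldots, f_M\}$ is an ideal triangulation of $F$ based at all $s$ punctures.

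Applying the classical case of the theorem to $E'$ (whose proof proceeds by reconstructing each decorated ideal triangle from its three lambda lengths via the Minkowski-model formula $\lambda(e) = \sqrt{-\langle u, v\rangle/2}$ and gluing triangles along shared edges of matching lambda length), we obtain that $\Lambda_{E'} : \widetilde{T}(F) \to \mathbb{R}_{>0}^{E'}$ is a real-analytic homeomorphism. Now use the tautological product decomposition $\widetilde{T}(F) = \widetilde{T}_{\mathcal{P}}(F) \times \mathbb{R}_{>0}^M$, in which the second factor records the horocycle length $\mu_i$ at each $p_i$. Since every arc of $E$ has both endpoints in $\mathcal{P}$, its lambda length depends only on the $\widetilde{T}_{\mathcal{P}}(F)$-coordinate, and moreover $\lambda(f_i)$ depends only on the $\widetilde{T}_{\mathcal{P}}(F)$-coordinate and on $\mu_i$ (since $p_j$ for $j \neq i$ is not an endpoint of $f_i$). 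Under the natural identification $\mathbb{R}_{>0}^{E'} = \mathbb{R}_{>0}^E \times \mathbb{R}_{>0}^M$, we therefore have
\[
\Lambda_{E'}(\widetilde{\Gamma}, \mu) = \bigl(\Lambda_E(\widetilde{\Gamma}),\, L_1(\widetilde{\Gamma}, \mu_1), \ldots, L_M(\widetilde{\Gamma}, \mu_M)\bigr).
\]
From the formula $\lambda(f_i) = \exp(\delta_i/2)$ for the signed truncated length along $f_i$ between the two bounding horocycles, one checks directly that for fixed $\widetilde{\Gamma}$ the map $\mu_i \mapsto L_i(\widetilde{\Gamma}, \mu_i)$ is a monotone real-analytic homeomorphism of $\mathbb{R}_{>0}$ onto itself.

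The conclusion is then formal. For injectivity, if $\Lambda_E(\widetilde{\Gamma}_1) = \Lambda_E(\widetilde{\Gamma}_2)$, choose $\mu^{(j)} \in \mathbb{R}_{>0}^M$ so that $L_i(\widetilde{\Gamma}_j, \mu_i^{(j)}) = 1$ for all $i$ and $j = 1, 2$; the two images $\Lambda_{E'}(\widetilde{\Gamma}_j, \mu^{(j)})$ then coincide, so injectivity of $\Lambda_{E'}$ forces $\widetilde{\Gamma}_1 = \widetilde{\Gamma}_2$. For surjectivity, any $x \in \mathbb{R}_{>0}^E$ is realized by $\widetilde{\Gamma} := \pi\bigl(\Lambda_{E'}^{-1}(x, 1, \ldots, 1)\bigr)$, where $\pi : \widetilde{T}(F) \to \widetilde{T}_{\mathcal{P}}(F)$ is the projection. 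Continuity and real-analyticity of $\Lambda_E$ are immediate as a coordinate projection of $\Lambda_{E'}$, and the same properties of the inverse follow from the explicit description $\Lambda_E^{-1}(x) = \pi \circ \Lambda_{E'}^{-1}(x, 1, \ldots, 1)$. The principal substantive input is thus the classical ideal-triangulation case of the theorem; once that is in hand, the extension to quasi triangulations is a clean fibered argument, and its main subtlety is merely verifying that the completion $E \mapsto E'$ exists and respects the product decomposition of $\widetilde{T}(F)$.
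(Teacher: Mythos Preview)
The paper does not supply a proof of this theorem; it is quoted as background from \cite{[P3]} (Theorem~2.2.5 there) and immediately used. So there is no in-paper argument against which to compare.

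Your reduction is sound. The completion $E\mapsto E'$ by inserting one arc $f_i$ into each once-punctured monogon does produce an ideal triangulation based at the full puncture set, and the lambda length of each $e\in E$ is insensitive to the horocycle data at the undecorated $p_i$ since those punctures are not endpoints of any arc in $E$. The fibered decomposition $\Lambda_{E'}=(\Lambda_E,L_1,\ldots,L_M)$ is therefore legitimate, and the monotonicity of $\mu_i\mapsto L_i(\widetilde\Gamma,\mu_i)$ is immediate from the definition $\lambda=\exp(\delta/2)$: shrinking the horocycle at $p_i$ increases the signed truncated distance along $f_i$, and the limits $\mu_i\to 0^+$ and $\mu_i\to\infty$ send $\lambda(f_i)$ to $\infty$ and $0$ respectively. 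Your injectivity and surjectivity steps are then formal, and real-analyticity of the inverse follows from that of $\Lambda_{E'}^{-1}$.

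Two minor remarks. First, the normalization you quote for the Minkowski inner product, $\lambda(e)=\sqrt{-\langle u,v\rangle/2}$, differs from the paper's $\lambda=\sqrt{-u\cdot v}$; this is harmless for the argument but worth reconciling if you intend to use explicit formulas later. Second, you are of course invoking the ideal-triangulation case of the very theorem you are proving; this is fine provided you are explicit that the substantive content (the convex-hull reconstruction of a decorated hyperbolic structure from lambda lengths on an ideal triangulation) is being imported from the cited source, and that your contribution is the extension to quasi triangulations.
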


\begin{definition}[h-lengths]
Given a q.c.d.\ $E$ of $F$ and a point $\tilde\Gamma\in\widetilde T(F)$, the hyperbolic length of a sub-arc of a horocycle subtending an ideal polygon is its {\em h-length}.
\end{definition}

We shall typically let lower-case Roman letters denote lambda lengths and lower-case Greek letters denote h-lengths.  Formulas for h-lengths of horocyclic sub-arcs in terms of nearby lambda lengths in ideal triangles and once-punctured monogons are shown in Figure~\ref{fig:hlengthB}a.  Note that the hyperbolic length of the horocycle itself is by definition the sum of h-lengths of its  
horocyclic sub-arcs.

\begin{figure}[htbp]
	\centering
		\includegraphics[width=0.75\textwidth]{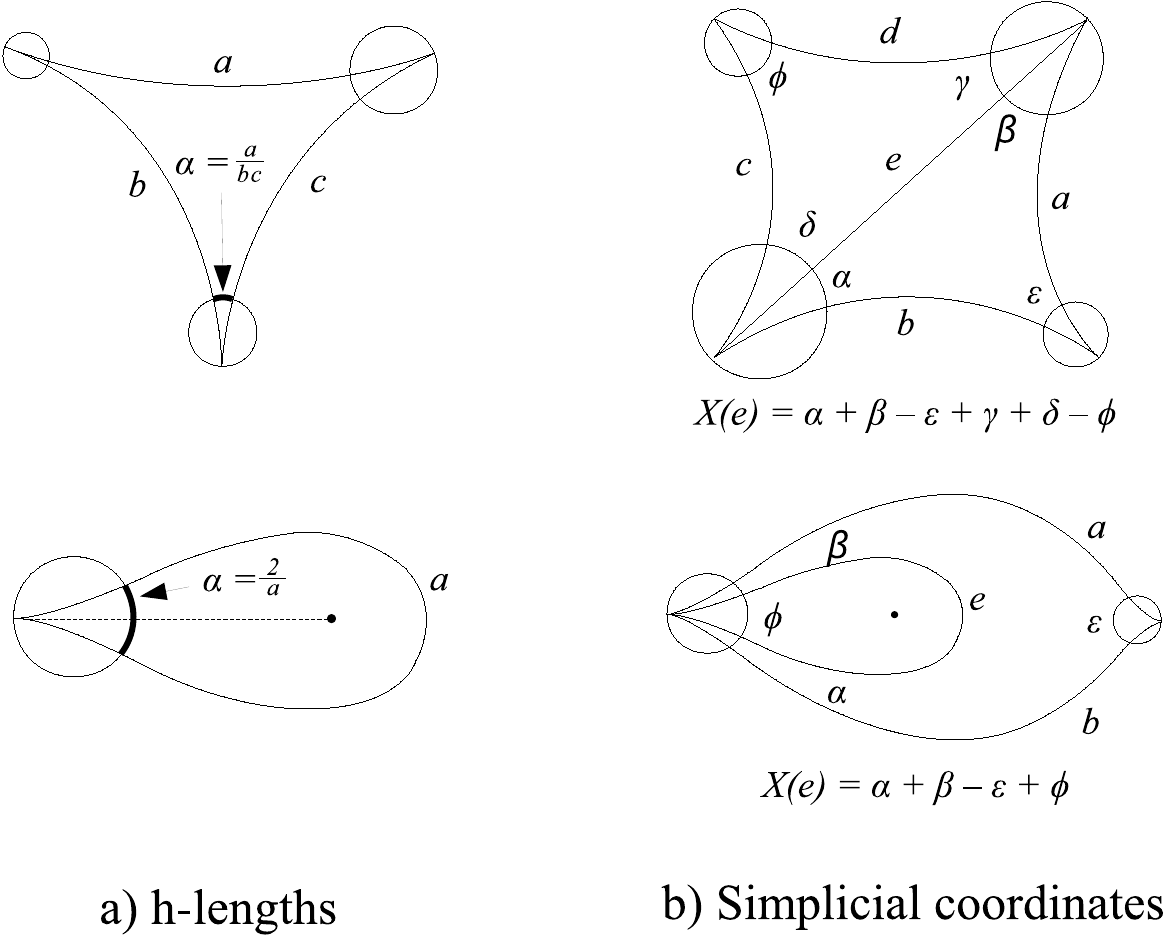}
	\caption{{\small  h-lengths and simplicial coordinates.}}
	\label{fig:hlengthB}
\end{figure}

\begin{definition}[Simplicial coordinates]
Suppose that $e$ is the diagonal of a decorated ideal quadrilateral with frontier edges $a,b,c,d$.  
The {\em simplicial coordinate} $X(e)$ of $e$ given by:
\[
X(e) = \alpha + \beta - \epsilon + \gamma + \delta - \phi=\frac{a^2 + b^2 - e^2}{abe} + \frac{c^2 +d^2 - e^2}{cde}
\]
again in the notation of Figure~\ref{fig:hlengthB}b top with
\[
X(e) =  \alpha + \beta - \epsilon  +  \phi = \frac{a^2 + b^2 - e^2}{abe} + \frac{2}{e}
\]
in case $e$ bounds a once-punctured monogon as in Figure~\ref{fig:hlengthB}b bottom. 
\end{definition}

The geometric significance
of simplicial coordinates will be discussed in subsection \ref{subsec:convex}.

It follows from Theorem \ref{thm:lambdacoords} that projectivized lambda lengths give coordinates on the
projectivized decorated Teichm\"uller spaces.  Simplicial coordinates and h-lengths
likewise descend to projective classes by homogeneity.
As a further point of notation, we may let $\lambda$, $\alpha$ and $X$ denote 
tuples of lambda lengths, h-lengths and simplicial coordinates 
with respective
projectivized tuples $\bar\lambda$, $\bar\alpha$, and $\bar X$.

\subsection{Dual punctured fatgraphs}

\begin{definition}[Punctured fatgraph and vertices]
A {\em fatgraph} is a graph together with a cyclic ordering of the half-edges incident on each vertex.  A {\em punctured fatgraph} $G$ is a fatgraph with certain of its vertices colored with a $\star$ icon, which we call {\em punctured vertices}. 
\end{definition}

We will consider only punctured fatgraphs in which univalent and bivalent vertices are necessarily punctured.  A punctured fatgraph $G$ then determines a punctured surface $F(G)$ as follows.  To each $k$-valent vertex of $G$ is associated an oriented ideal $k$-gon, which is once-punctured if and only if the corresponding vertex is punctured, and which may therefore be a monogon or bigon.  Sides of these polygons are identified in pairs, one such pair corresponding to each edge of $G$, so as to preserve orientations.  The resulting surface $F(G)$ is a punctured surface, where the punctures come in two varieties, either punctured vertices in $G$, or ideal boundary components arising from the gluing together of ideal polygons to form the surface. 

\begin{definition}[Boundary component of a punctured fatgraph]
By a {\em boundary component} of a punctured fatgraph $G$, we will mean an ideal boundary component arising from the construction of $F(G)$. 
\end{definition}

For each q.c.d.\ $E$ of $F$, there is a {\em dual punctured fatgraph} $G(E)$, whose vertices and edges, respectively, are in one-to-one correspondence with components of  $F-E$ and components of $E$.  Specifically, an edge connects two vertices of $G(E)$ if its corresponding dual ideal arc lies in the common frontier of the corresponding complementary regions, where the cyclic ordering is induced from the orientation of $F$; see Figure \ref{fig:StandardQuadB}.  This figure also illustrates that vertices associated to punctured complimentary regions are colored by $\star$.
In particular, vertices of $G(E)$ of valence one or two are necessarily punctured.
Any q.c.d.\  $E'$ may be completed to a quasi triangulation $E$ based at the same punctures, and $G(E')$ arises from $G(E)$ in this case by collapsing a forest of edges of $G(E)$, i.e., a collection of edges so that each component is contractible and contains at most a single punctured vertex.  Conversely as above, a punctured fatgraph $G$ determines a corresponding
q.c.d.\ $E(G)$ in the surface $F(G)$ it determines.  Given  $\widetilde{\Gamma}$, it is convenient to associate lambda lengths or simplicial coordinates  to elements of $E$ or their dual edges in $G(E)$, and likewise for h-lengths associated to sectors between edges.  Figure \ref{fig:StandardQuadB} indicates our standard notation for h-lengths nearby an edge of $G(E)$.  We shall sometimes abuse notation and let the Roman letters denote at once an ideal arc in $E$, its dual edge in $G(E)$, or its lambda length for some specified
 $\tilde\Gamma$.

\begin{figure}[htbp]
	\centering
		\includegraphics[width=0.650\textwidth]{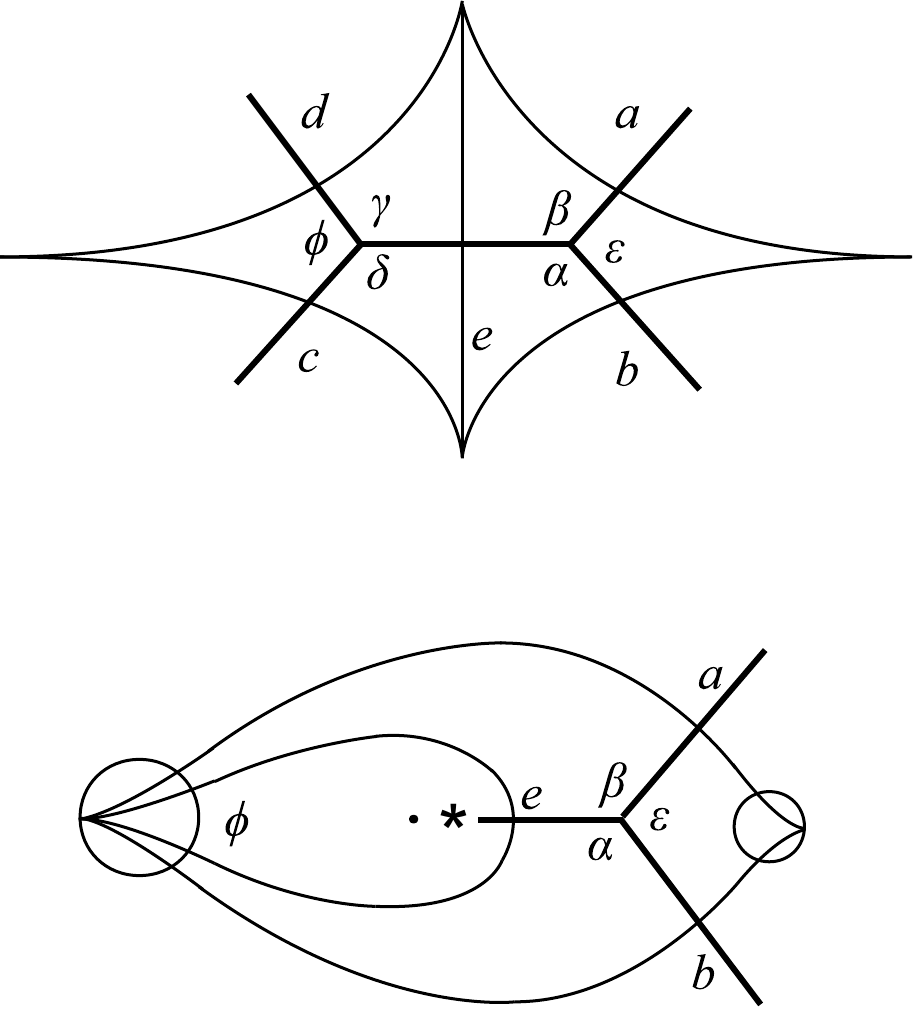}
	\caption{{\small Standard notation near an edge $e$.}}
	\label{fig:StandardQuadB}
\end{figure}

\begin{definition}[Quasi efficient, simple cycles and horocycles]
We say that a cycle  $\gamma$ in a possibly punctured fatgraph is {\em quasi efficient} if it 
never consecutively traverses an edge followed by its reverse unless the intervening vertex is punctured.
If a quasi efficient cycle $\gamma$ in a fatgraph is a chain of distinct edges having only bivalent vertices, none of which is punctured, we shall say that $\gamma$ is a {\em simple cycle}.  A simple cycle which is boundary-parallel to a horocycle on $F$ we will refer to as a {\em horocycle}.
\end{definition}

\begin{lemma} [Telescoping Lemma]
\label{lem:telescope}
Suppose that $\gamma$ is a quasi efficient cycle in a possibly punctured fatgraph consecutively traversing the edges $e_1,\cdots,e_n$ and let $\alpha_1,\cdots,\alpha_n$ be the included h-lengths between consecutive edges along $\gamma$.  Then
\[
\sum_{i=1}^n X(e_i) = \sum_{i=1}^n \alpha_i.
\]
\end{lemma}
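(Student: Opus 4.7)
The plan is to decompose each $X(e_i)$ as a sum of two \emph{local contributions}, one at each of its endpoints in the fatgraph, and then to regroup those contributions by transition vertex of the cycle. Writing $C(e,v)$ for the contribution of $X(e)$ at an endpoint $v$ of $e$, and letting $v_j$ denote the vertex shared by consecutive edges $e_j$ and $e_{j+1}$ (so that $e_i$ runs from $v_{i-1}$ to $v_i$), the decomposition $X(e_i) = C(e_i, v_{i-1}) + C(e_i, v_i)$ together with the cyclic reindexing $j = i - 1$ yields
\[
\sum_{i=1}^n X(e_i) = \sum_{j=1}^n \bigl[C(e_j, v_j) + C(e_{j+1}, v_j)\bigr].
\]
The lemma thus reduces to the local identity $C(e_j, v_j) + C(e_{j+1}, v_j) = 2\alpha_j$ at each transition vertex $v_j$.

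The decomposition of $X(e)$ is visible directly in the formulas preceding the lemma. At a trivalent unpunctured endpoint, $X(e) = (\alpha + \beta - \epsilon) + (\gamma + \delta - \phi)$ exhibits the local contribution as (sum of h-lengths of the two sectors adjacent to $e$) minus (h-length of the opposite sector). At a univalent punctured endpoint, $X(e) = (\alpha + \beta - \epsilon) + \phi$ exhibits the contribution as simply $+\phi = 2/e$. Since a quasi triangulation has only these two endpoint types (trivalent unpunctured coming from triangles, univalent punctured coming from once-punctured monogons), the decomposition is well defined for every edge.

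The local identity at $v_j$ is then a direct computation. If $v_j$ is trivalent unpunctured, quasi-efficiency forces $e_j \neq e_{j+1}$, and the three sectors at $v_j$ carry h-lengths $\alpha_j$ (the sector between $e_j$ and $e_{j+1}$) together with $u$ and $v$ flanking the third incident edge; then $C(e_j, v_j) = \alpha_j + u - v$ and $C(e_{j+1}, v_j) = \alpha_j + v - u$, which sum to $2\alpha_j$ by the cancellation of $\pm u$ and $\pm v$. If instead $v_j$ is univalent punctured, quasi-efficiency permits the cycle to backtrack, so $e_j = e_{j+1}$ is the unique incident edge $e$, the lone sector has h-length $\alpha_j = \phi = 2/e$, and both contributions equal $\phi$, again summing to $2\alpha_j$. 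I expect no serious obstacle: the proof is a bookkeeping argument powered by the algebraic identity $(\alpha + u - v) + (\alpha + v - u) = 2\alpha$, with the quasi-efficiency hypothesis arranged precisely so that each transition falls into exactly one of these two cleanly computable cases.
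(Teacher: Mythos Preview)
Your proof is correct and is precisely the explicit unpacking of what the paper compresses into the single phrase ``this follows immediately from the formula for simplicial coordinates via local cancellation.'' The endpoint decomposition $X(e_i)=C(e_i,v_{i-1})+C(e_i,v_i)$ followed by regrouping at each transition vertex is exactly the local cancellation the paper has in mind, and your case analysis (trivalent unpunctured versus univalent punctured, with quasi-efficiency ensuring these exhaust the possibilities) fills in the details the paper leaves to the reader.
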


\begin{proof}
This follows immediately from the formula for simplicial coordinates via local cancellation.
\end{proof}

\subsection{Convex Hull Construction and Cell Decomposition}
\label{subsec:convex}

For conceptual completeness, we finally briefly recall the convex hull construction
\cite{EP} in 
Minkowski 3-space (i.e.,
$\mathbb R^3$ with bilinear pairing $(x,y,z)\cdot(x',y',z')= xx' + yy' -zz'$) and the cell decomposition of
decorated Teichm\"uller spaces \cite{[P]}.  Affine duality $u\mapsto h(u)=\{ w\in \mathbb H: w\cdot u=-2^{-{1\over 2}}\}$,
where $\mathbb H=\{ w\in \mathbb R^3:w\cdot w=-1 \textrm{ and } z > 0\}$,
identifies $L^+=\{ u\in\mathbb R^3:u\cdot u=0 \textrm{ and } z > 0\}$ with the collection of all horocycles in
the model $\mathbb H$ of the hyperbolic plane.  

In fact, the lambda length between two horocycles $h(u)$ and $h(v)$ is computed
to be simply $\sqrt{-u\cdot v}$, so the identification with $L^+$ is geometrically natural.
Furthermore, the simplicial coordinate can be computed to be $2^{3\over 2}abcd$ times the volume of the tetrahedron
spanned by the vertices of a lift of a decorated quadrilateral to $L^+$ in the notation of Figure \ref{fig:StandardQuadB},
where a vertex in $L^+$ corresponding to an undecorated puncture is taken to infinity in $L^+$.  

Any punctured hyperbolic surface $F$ is manifest as $\mathbb{H}/\Gamma$, where  $\Gamma<SO(2,1)$ is a group of Minkowski isometries 
(respecting orientation and preserving $\mathbb H$).  Moreover, using the above identification of $L^+$ with horocycles in $\mathbb{H}$, a set of partial decorations (or horocycles) are a $\Gamma$-invariant discrete set $\mathcal B\subset L^+$ corresponding
to that particular partial decoration.  The Euclidean closed convex hull of $\mathcal B$ in $\mathbb R^3$ is shown to have only elliptic and parabolic support planes as conic sections, and the edges in the frontier of this convex body project in the natural way to a family $E(\tilde\Gamma)$ of arcs connecting punctures in the underlying surface $F=\mathbb H/\Gamma$.

\begin{theorem}
\label{thm:icd}{\rm [Theorem 5.5.9 of \cite{[P3]}]}
For any non-empty set $\mathcal P$ of punctures of $F$ and
any $\tilde{\Gamma} \in \widetilde{T}_{\mathcal P}(F)$, the collection
$E(\tilde{\Gamma})$ is a q.c.d.\ of $F$ based at ${\mathcal P}$.
\end{theorem}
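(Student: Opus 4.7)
The plan is to pass to the universal cover and use the Minkowski model identification $L^+ \leftrightarrow \{\text{horocycles in }\mathbb{H}\}$ described just above the statement. A point $\tilde{\Gamma}\in\widetilde{T}_{\mathcal P}(F)$ yields a Fuchsian group $\Gamma<SO(2,1)$ together with a $\Gamma$-invariant discrete set $\mathcal B\subset L^+$, where $\mathcal B$ lifts precisely the decorated punctures in $\mathcal P$ (other punctures correspond to parabolic fixed points for which no horocycle has been selected). I would form the Euclidean closed convex hull $C=\mathrm{conv}(\mathcal B)\subset\mathbb R^3$ and define $E(\tilde{\Gamma})$ as the projection to $F=\mathbb H/\Gamma$ of the edges of the frontier of $C$. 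The strategy is to read the required combinatorial structure directly off $C$: arcs from its edges, complementary regions from its faces, disjointness from the fact that edges of a convex body meet only at vertices.

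First I would classify the support planes of $C$. A linear functional $\ell$ on $\mathbb R^3$ attaining its maximum on $\mathcal B$ has level set either an elliptic plane (which meets $\mathbb H$ in a geodesic and cuts off a finite-area hyperbolic half-plane) or a parabolic plane (tangent to $L^+$ along the ray through a single point of $\mathcal B$). Hyperbolic support planes are ruled out: such a plane would meet $L^+$ in two rays, and $\Gamma$-invariance together with discreteness of $\mathcal B$ would force infinitely many points of $\mathcal B$ in the corresponding strip, contradicting extremality. Elliptic faces have finitely many vertices in $\mathcal B$, and project to finite-sided ideal polygons bounded by $\Gamma$-geodesics; parabolic faces are spanned by the orbit of a single $\mathcal B$-point under the parabolic stabilizer of that point, and project to once-punctured polygons whose puncture is precisely the undecorated puncture corresponding to that parabolic fixed point.

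Next I would verify the q.c.d.\ axioms. Pairwise disjointness of the arcs away from endpoints is immediate since distinct frontier edges of $C$ share at most a vertex of $\mathcal B$, and the action of $\Gamma$ is by isometries of $\mathbb R^3$. No two projected arcs are parallel: if two arcs cobounded a bigon, a lift of that bigon would give two distinct edges of the convex hull with the same pair of endpoints in $\mathcal B$, which is impossible for a convex body in $\mathbb R^3$. The complementary regions are exactly the projections of the (elliptic or parabolic) faces, hence polygons or exactly once-punctured polygons, and the endpoints of $E(\tilde{\Gamma})$ lie in $\mathcal P$ by construction. Finally, properness and $\Gamma$-equivariance of the construction show that in $F$ only finitely many isotopy classes of arcs arise and that $E(\tilde{\Gamma})$ descends well-defined.

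The main obstacle is controlling the parabolic faces when $\mathcal P\subsetneq\{\text{all punctures}\}$. For a fully decorated surface (the classical setting of \cite{[P]}), all support planes are elliptic and one obtains an i.c.d.; here one must show that each undecorated puncture genuinely contributes exactly one complementary once-punctured region, not more and not fewer. I would establish this by studying the local picture near an undecorated cusp: in a horoball neighborhood of such a puncture, the lifts in $L^+$ of decorated punctures accumulate at the corresponding point at infinity, and a convexity argument in Minkowski space identifies a unique extremal parabolic direction per orbit. This gives a well-defined parabolic support plane per undecorated puncture, whose face projects to the desired once-punctured monogon-type region, completing the verification that $E(\tilde{\Gamma})$ is a q.c.d.\ based at $\mathcal P$.
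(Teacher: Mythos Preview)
The paper does not give its own proof of this theorem; it is quoted verbatim as Theorem~5.5.9 of \cite{[P3]}, with the preceding paragraph merely summarizing the Minkowski convex hull construction on which that reference is based. Your proposal is precisely that standard argument, so there is nothing to compare---you have reconstructed the outline of the cited proof.

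One small imprecision worth flagging: your description of a parabolic face as ``spanned by the orbit of a single $\mathcal B$-point under the parabolic stabilizer of that point'' presumes the complementary region at an undecorated puncture is a once-punctured monogon, but in general it can be a once-punctured $k$-gon with $k>1$, in which case the relevant face carries several $\Gamma_p$-orbits of $\mathcal B$-points (one per boundary arc). Your argument for ruling out hyperbolic support planes is also not the standard one (which uses density of parabolic fixed points in the limit set rather than a strip-accumulation argument), though the conclusion is of course correct. Neither issue constitutes a gap in the strategy.
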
 

We emphasize that a puncture of $F$ is {\em not} in $\mathcal P$, and hence undecorated, if and only if it is represented by a punctured vertex in the dual punctured fatgraph $G(E(\widetilde{\Gamma}))$ arising from the convex hull construction.

Since the convex hull construction is invariant under homothety in Minkowski space, it is independent of global scaling of lengths of horocycles and therefore descends to the projectivized spaces. For any q.c.d.\ $E$ of $F$, we are led to define
$$\cC(E) = \{\widetilde{\Gamma}\in P \widetilde T_*(F) : E(\widetilde{\Gamma})=E\}~~\subset~~  \bar{\cC}(E) = \{\widetilde{\Gamma} \in P\widetilde T_*(F): E(\widetilde{\Gamma})\subseteq E\},
$$
where equality and inclusion of q.c.d.'s are understood only up to proper isotopy fixing the punctures. 

\begin{theorem}
\label{thm:celldecompdecTeich}{\rm [Theorem 5.5.9 of \cite{[P3]}]}
For any surface $F$ of negative Euler characteristic with $s >0$ punctures, the collection
\[
\{\cC(E) : E \textrm{ {\em is a q.c.d.\ of }} F\}
\]
of subsets is an $MC(F)$-invariant ideal simplicial decomposition for $P\widetilde{T}_*(F)$.  Indeed, 
for any quasi triangulation $E$, projectivized simplicial coordinates on arcs as barycentric coordinates give $\cC(E)$ the natural structure of an open simplex in $P\widetilde{T}_*(F)$, and this simplicial structure extends to $\bar{\cC}(E)$ by adding certain faces, namely, those faces corresponding to sub-arc families of $E$ with simplicial coordinates whose support is a q.c.d.\ of $F$.
\end{theorem}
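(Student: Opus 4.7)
The plan is to break the theorem into three layers: (i) the partition of $P\widetilde{T}_*(F)$ by cells $\cC(E)$ is a well-defined decomposition indexed by q.c.d.'s, (ii) each cell, for a quasi triangulation $E$, is parametrized as an open simplex by projectivized simplicial coordinates, and (iii) the boundary of the closed simplex $\bar\cC(E)$ consists exactly of those sub-arc families whose positive-coordinate support is still a q.c.d. I would rely throughout on the Minkowski-model interpretation: lifting horocycles to $\mathcal B\subset L^+$ and using the formula given in the text that $X(e)=2^{3/2}abcd\cdot {\rm Vol}(\text{tet})$, where the tetrahedron is spanned by the four lifts of the punctures of the quadrilateral adjacent to $e$, with undecorated vertices sent to $\infty\in L^+$ so that the volume becomes a (signed) triangle area.

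For (i), starting with any $\widetilde\Gamma\in \widetilde T_{\mathcal P}(F)$, Theorem~\ref{thm:icd} produces a q.c.d.\ $E(\widetilde\Gamma)$ based at $\mathcal P$, giving a partition of $P\widetilde T_*(F)$ into nonempty cells indexed by q.c.d.'s based at some $\mathcal P\neq\emptyset$. The key geometric fact that drives the rest is the \emph{convex hull criterion}: an arc $e$ lies in $E(\widetilde\Gamma)$ precisely when its two adjacent triangles form a locally convex portion of the Euclidean boundary of the hull of $\mathcal B$, which in the volume formula translates to $X(e)>0$; while $X(e)=0$ corresponds to coplanarity of the four vertices (so that $e$ can be removed to form a larger complementary polygon) and $X(e)<0$ signals that $e$ should be flipped. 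Hence for a quasi triangulation $E$,
\[
\cC(E)=\{\widetilde\Gamma\in P\widetilde T_*(F):X(e;\widetilde\Gamma)>0\text{ for every }e\in E\},
\]
and $\widetilde\Gamma\in\bar\cC(E)$ corresponds to $X(e;\widetilde\Gamma)\ge 0$ with the positive support being precisely $E(\widetilde\Gamma)$.

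For (ii), I would show that on a fixed quasi triangulation $E$, the assignment $\widetilde\Gamma\mapsto(X(e;\widetilde\Gamma))_{e\in E}$ is a homeomorphism between $\cC(E)$ (before projectivization, inside $\widetilde T_{\mathcal P}(F)$) and the positive orthant in $\mathbb R^E$. Injectivity and continuity are automatic from the defining algebraic formulas; surjectivity is the content, and I would derive it by inverting the simplicial-coordinate equations back to lambda lengths and invoking Theorem~\ref{thm:lambdacoords}. The Telescoping Lemma~\ref{lem:telescope}, applied to the boundary cycles of each once-punctured monogon or horocyclic loop around each puncture, gives $\sum_{e\in E} X(e) = 2\sum_{p\in{\mathcal P}}\ell(p)$, so after projectivizing both horocyclic lengths and simplicial coordinates the constraint becomes $\sum X(e)=1$, making $\cC(E)$ canonically an open simplex. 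For a q.c.d.\ $E'$ that is not a triangulation, one embeds it in any quasi triangulation $E\supset E'$, and $\cC(E')$ is the face where $X(e)=0$ for $e\in E\setminus E'$, independently of the chosen completion.

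For (iii), the faces of the closed simplex $\bar\cC(E)$ are obtained by setting some subset $S\subset E$ of simplicial coordinates to zero; but such a boundary point in $P\widetilde T_*(F)$ actually lies in some $\cC(E'')$ iff the support $E\setminus S$ is itself a q.c.d., which is precisely the claimed restriction on which faces appear. The $MC(F)$-invariance is automatic because the convex hull construction in $\mathbb H/\Gamma$ is natural under lifting by an orientation-preserving homeomorphism. The main obstacle I anticipate is the sharpness of the convex hull criterion, that is, reducing ``$e$ is a hull edge'' to ``$X(e)>0$'' and showing that a boundary stratum with $X(e)=0$ for $e\in S$ really does arise from the degeneration in which the four vertices across each such $e$ become affinely dependent; once this geometric statement is in hand, the simplicial parametrization and the face structure follow from elementary manipulations of the Ptolemy and volume formulas together with Theorem~\ref{thm:lambdacoords}.
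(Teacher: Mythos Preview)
The paper does not supply its own proof of this theorem; it is quoted verbatim as Theorem~5.5.9 of \cite{[P3]} and treated as background. So there is no in-paper argument to compare against, only the original source.

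Your outline follows the standard route from \cite{[P],[P3]} and the main architecture is right: convex hull gives the q.c.d., the sign of $X(e)$ detects local convexity across $e$, and projectivized simplicial coordinates furnish barycentric coordinates on each top cell. Two places deserve tightening. First, in (ii) you say ``injectivity and continuity are automatic from the defining algebraic formulas''; this is not quite so. The passage from $\widetilde\Gamma$ to its tuple of simplicial coordinates is visibly continuous, but showing it is a \emph{bijection} onto the positive orthant is exactly the nontrivial step (Penner's ``arithmetic problem''): one must produce, from an arbitrary positive tuple $(X(e))_{e\in E}$, a unique solution in lambda lengths. This is not a simple algebraic inversion; in \cite{[P3]} it is handled by a continuity/degree argument together with a direct variational or fixed-point construction, and your sketch should acknowledge that this is where the real work lies rather than deferring it to Theorem~\ref{thm:lambdacoords}. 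Second, the equivalence ``$e\in E(\widetilde\Gamma)$ iff $X(e)>0$'' in your (i) has a global component: positivity of all $X(e)$ is a \emph{local} convexity condition at each edge, and one must argue that local convexity of the piecewise-linear frontier in Minkowski space implies it is actually the boundary of the convex hull of $\mathcal B$. This is true but uses the fact that the relevant support planes are elliptic or parabolic and hence bound half-spaces containing $L^+$; without this step the characterization of $\cC(E)$ by inequalities is unjustified.

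A minor arithmetic point: your identity $\sum_e X(e)=2\sum_p\ell(p)$ is off by a constant. Summing the Telescoping Lemma over all boundary cycles counts each edge twice, giving $2\sum_e X(e)=2\sum_p\ell(p)$, i.e.\ $\sum_e X(e)=\sum_p\ell(p)$; compare the paper's normalization that horocycle lengths summing to two corresponds to simplicial coordinates summing to one. This does not affect the logic, only the constant in the normalization.
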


\begin{remark}
\label{rem:triangle}
In fact, a point in $\cC(E)$ is determined by the lambda lengths on the edges of $E$, and these lambda lengths  $\lambda (e_i)$ on the frontier edges $e_i$, for $i=1,\ldots ,k$, of a complementary $k$-gon must satisfy the generalized triangle inequalities
$\lambda(e_i)\leq \sum_{j\neq i} \lambda(e_j)$ with strict inequality if the $k$-gon is unpunctured.  One can prove this directly
algebraically using non-negativity of simplicial coordinates on edges of $E$ or geometrically since a plane in Minkowski space
containing three points in $L^+$ is elliptic or parabolic, respectively, if and only if the lambda lengths between them satisfy the strict triangle inequalities or some triangle equality.
\end{remark}

Via the identification of punctured fatgraphs with q.c.d.'s, we shall sometimes equivalently
regard the cells $\cC(G)=\cC(G(E))$ or $\bar\cC(G)=\bar\cC(G(E))$ as indexed by isotopy classes of fatgraphs
embedded in $F$.  In this context, the face relation is generated by collapsing an edge with distinct endpoints at most one
of which is punctured, where if one endpoint is indeed punctured, then the resulting vertex is again punctured.

%************************************************************
\section{Space of filtered screens}
\label{sec:Screen}

We begin by recalling the principal definitions and results from \cite{[MP],[P3]} which are the starting point for
the considerations of this paper and then proceed to the key new definitions and their investigation.

\subsection{Screens}
\label{subsec:screens}

Consider a q.c.d.\ $E$ of a surface $F$ and its dual possibly punctured fatgraph $G=G(E)$.  A subset $A\subset E$ determines a corresponding smallest subgraph $G(A)$ containing the edges $A$.  Note that $G(A)$ may be disconnected and may have univalent or bivalent vertices which may be non-punctured.  

\begin{definition}[Quasi recurrent, recurrent]
We say that $A$ or $G(A)$ is {\it quasi recurrent} provided
each univalent vertex of $G(A)$ is punctured.  If $G(A)$ is quasi recurrent without punctured vertices, we simply say that $G(A)$ is {\em recurrent}\footnote{1-particle irreducible graphs from quantum field theory are those without univalent vertices that furthermore have no separating edges.  These play a role in renormalization that is analogous to the role played here by recurrent fatgraphs.}.
\end{definition}

\begin{lemma}
\label{lem:rec}
A subset $A$ of a q.c.d.\ is quasi recurrent if and only if for each $e\in A$ there is a quasi efficient cycle
in $G(A)$ which traverses $e$.
\end{lemma}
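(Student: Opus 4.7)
I would prove both directions separately, with the forward direction being the substantive part.

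For the reverse direction ($\Leftarrow$), I would argue directly. Suppose every edge of $A$ is traversed by a quasi efficient cycle, and let $v$ be a univalent vertex of $G(A)$ with unique incident edge $e$. A quasi efficient cycle through $e$ must visit $v$ while traversing $e$, and since $v$ is univalent the only transition available at $v$ is to re-traverse $e$ in reverse. Quasi efficiency then forces $v$ to be punctured, so every univalent vertex of $G(A)$ is punctured and $A$ is quasi recurrent.

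For the forward direction ($\Rightarrow$), assume $A$ is quasi recurrent. The key observation is that quasi recurrence guarantees a legal continuation of any quasi efficient walk at every vertex: at a non-punctured vertex $v$ quasi recurrence forces $\mathrm{val}(v)\ge 2$, so some non-backtracking continuation exists; at a punctured vertex any continuation (including backtracking) is admissible. Hence, given $e\in A$ with an orientation $\vec e$, one may extend walks beginning or ending at $\vec e$ indefinitely in both directions, and finiteness of the oriented-edge set of $G(A)$ together with the pigeonhole principle yields a repetition in any such bi-infinite walk, producing a quasi efficient cycle.

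The main obstacle is ensuring the extracted cycle actually traverses $e$ and not merely some other edge of $G(A)$. I would handle this by induction on $|A|$: if $G(A)$ contains a univalent (hence punctured) leaf whose edge is not $e$, trim it and mark the surviving endpoint punctured if necessary to preserve quasi recurrence, apply the inductive hypothesis to obtain a quasi efficient cycle through $e$ in the reduced graph, and lift the cycle to $G(A)$ by splicing in the local excursion across the trimmed leaf, which is itself quasi efficient precisely because the leaf vertex is punctured. When $G(A)$ has no univalent vertices, every vertex has valence $\ge 2$ and a non-backtracking quasi efficient cycle through $\vec e$ can be constructed directly, with the involution $\vec f\mapsto \vec f^{-1}$---an anti-automorphism of the transition digraph pairing forward with reversed backward walks---providing the symmetry needed to close the walk back to $\vec e$. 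The delicate bookkeeping of the trimming step, including verifying that the lifted cycle remains quasi efficient at both the spliced endpoints and across the rest of the graph, is the main technical difficulty.
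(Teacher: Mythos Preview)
Your reverse direction matches the paper's.

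For the forward direction, your approach is genuinely different from the paper's and considerably harder. The paper exploits the one piece of structure you never use: $G(A)$ is not just a graph but a \emph{fatgraph}, i.e., it sits in the surface $F$ with a cyclic order at every vertex. The paper simply takes the boundary of a regular neighborhood of $G(A)$ in $F$: each edge is replaced by two parallel arcs which are joined at the vertices according to the cyclic order (and around the tip at a punctured univalent vertex). The resulting collection of simple closed curves are precisely the boundary cycles of the ribbon graph; each is a quasi efficient closed walk, and every edge of $G(A)$ is traversed by one or two of them. That is the entire argument.

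Your combinatorial route can be made to work, but as written it has a gap in the base case. When $G(A)$ has no univalent vertices you assert that a non-backtracking closed walk through a prescribed oriented edge $\vec e$ ``can be constructed directly'' using the involution $\vec f\mapsto\vec f^{-1}$. This is not a proof: in the non-backtracking transition digraph every vertex has positive in- and out-degree, but that alone does \emph{not} force every vertex onto a directed cycle (think of a digraph $a\to a\to b\to c\to c$). One actually needs to show that every strongly connected component of this digraph is non-trivial, which for general graphs requires a further argument---for instance that the digraph is strongly connected whenever $G(A)$ is connected and not a simple cycle, and that the simple-cycle case is obvious. You gesture at the anti-automorphism but never say what it buys; pairing forward and backward reachability does not by itself close the walk at $\vec e$.

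The induction/trimming step is fine in principle, and your splicing of excursions at artificially-punctured vertices does produce quasi efficient cycles, but all of that labor (and the unfinished base case) is rendered unnecessary once you remember the cyclic orders: the boundary cycles are handed to you for free.
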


\begin{proof}
There can be no quasi efficient cycle traversing an edge incident on a univalent vertex which is not punctured.  Conversely if there are no univalent vertices which are not punctured,
we may take a regular neighborhood of $G(A)$ in $F$ so as to form a (possibly disconnected) subsurface of $F$ containing $G(A)$.  The resulting boundary of this subsurface, which is formed by connecting parallel copies of all of the edges contained in $A$, is a collection of simple closed curves.  Each component curve of the result represents a quasi efficient cycle in $G(A)$,
and for each edge in $A$, there is either one or two such curves traversing it by construction.
\end{proof}

\begin{corollary} Any subset $A$ of a q.c.d.\  contains a (possibly empty) maximal quasi recurrent subset given by the union of all quasi efficient cycles in $G(A)$.
\end{corollary}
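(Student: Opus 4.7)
The plan is to define the purported maximal quasi recurrent subset explicitly and then verify separately that it is (i) quasi recurrent and (ii) maximal among quasi recurrent subsets of $A$, in both cases reducing to the characterization in Lemma \ref{lem:rec}.

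First I would let
\[
A' = \bigcup \{e \in A : e \text{ lies on some quasi efficient cycle in } G(A)\},
\]
where $A' = \emptyset$ if no such cycle exists. This is automatically a subset of $A$, and it is the obvious candidate suggested by the lemma. The main content is showing that $A'$ itself is quasi recurrent, rather than merely a subset of $A$ whose edges lie on quasi efficient cycles of the larger graph $G(A)$.

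For quasi recurrence of $A'$, by Lemma \ref{lem:rec} it suffices to exhibit, for each $e \in A'$, a quasi efficient cycle in $G(A')$ traversing $e$. The key observation is that if $\gamma$ is a quasi efficient cycle in $G(A)$ traversing $e$, then every edge of $\gamma$ is itself traversed by a quasi efficient cycle in $G(A)$, namely $\gamma$ itself, and therefore every edge of $\gamma$ belongs to $A'$. Hence $\gamma$ is in fact a cycle in $G(A')$, and since quasi efficiency is a local condition on consecutive half-edges at each vertex, it is preserved on passage to the subgraph $G(A')$. This is the step I expect to require the most care, since one must be confident that no edge of $\gamma$ has been inadvertently excluded from $A'$ and that quasi efficiency is inherited; however both points follow immediately from the definition of $A'$ and the definition of quasi efficiency.

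Finally, for maximality, suppose $B \subseteq A$ is any quasi recurrent subset. By Lemma \ref{lem:rec} applied inside $G(B)$, each $e \in B$ lies on a quasi efficient cycle in $G(B) \subseteq G(A)$, so $e \in A'$ by the defining property of $A'$. Thus $B \subseteq A'$, which combined with the previous step shows that $A'$ is the unique maximal quasi recurrent subset of $A$, as claimed.
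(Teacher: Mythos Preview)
Your proof is correct and is precisely the natural argument the paper has in mind: the corollary is stated immediately after Lemma~\ref{lem:rec} without proof, and your two-step verification (quasi recurrence of $A'$ via the observation that any quasi efficient cycle in $G(A)$ already lies in $G(A')$, and maximality via Lemma~\ref{lem:rec} applied to any quasi recurrent $B\subseteq A$) is exactly the intended unpacking.
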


\begin{definition}[Screen]
A {\em screen} $\cA$ on a q.c.d.\ $E$ or its dual fatgraph $G(E)$ is a collection of subsets of $E$ with the following properties:
\begin{itemize}
	\item $E \in \mathcal{A}$;
	\item each $A \in \mathcal{A}$ is quasi recurrent;
	\item if $A,B \in \mathcal{A}$ with $A \cap B \neq \emptyset$, then either $A \subseteq B$ or $B \subseteq A$;
	\item for each $A \in \mathcal{A}$, we have $\bigcup \{B \in \mathcal{A}\ : \ B \subsetneq A\}\subsetneq A$.
\end{itemize}
\end{definition}

For each $A\in \cA$, there  is a 
maximal chain of proper inclusions $A=A^n \subset A^{n-1} \subset \cdots \subset A^0 = E$, where each $A^i \in \cA$,
and we define the {\it depth} of $A$ to be $n$.  Each edge $e\in E$ has its {\it depth} defined to be $\max\{\textrm{depth of }A\in\cA : e \in A\}$.
In fact, each element $A \in \mathcal{A}$ other than $A=E$ has an immediate predecessor $A'$.
We may furthermore consider the subsurface $F(G(A)) \subset F(G(A'))$, and define the {\em relative boundary} $\partial_\mathcal{A} A$ to be those boundary components of $F(G(A))$ which are neither puncture-parallel nor homotopic to a boundary component of $F(G(A'))$ (see Figure \ref{fig:New2B}); moreover, if $G(A)$ includes a simple cycle component which is not a component in $G(A')$, we simply include that simple cycle in $\partial_\mathcal{A} A$ (rather than two parallel copies arising from the boundary of the annular subsurface).  Finally, the {\em boundary} of the screen itself is $\partial\mathcal{A} = \bigcup_{A \in \mathcal{A}-\{E\}} \partial_\mathcal{A} A$. 

\begin{figure}[htbp]
	\centering
		\includegraphics[width=0.95\textwidth]{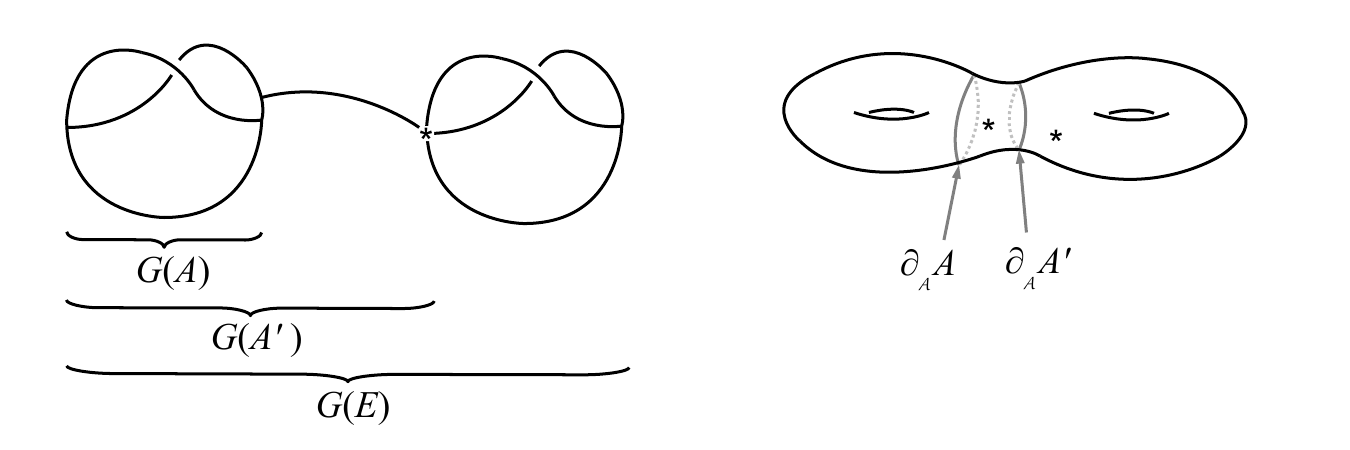}
	\caption{{\small A screen $\{A, A', E\}$ along with its boundary.}}
	\label{fig:New2B}
\end{figure} 

\begin{definition}[Stable path]
Suppose that $f_t:E\to{\mathbb R}_{>0}$, i.e., $f_t\in {\mathbb R}_{>0}^E$, for $t>0$, is a continuous one-parameter family of functions on the edges of some q.c.d.\ $E$.  Letting $P$ denote projectivization, there is an induced $\bar f_t\in P({\mathbb R}_{>0}^E)$, and
compactness of  $P{\mathbb R}_{\geq 0}^E\supset P({\mathbb R}_{>0}^E)$ guarantees the existence of accumulation points of ${\rm lim}_{t\to\infty} \bar f_t$ in $P{\mathbb R}_{\geq 0}^E$.  We say that $f_t$ is {\it stable} provided there is a unique such limit point and for any $e_1,e_2 \in E$ that $\lim_{t \to \infty} \frac{f_t(e_1)}{f_t(e_2)}$ exists, where here we allow the possibility that $\lim_{t \to \infty} \frac{f_t(e_1)}{f_t(e_2)}=\infty$.
\end{definition}

\begin{theorem}[McShane-Penner, \cite{[MP],[P3]}]
\label{thm:McShanePenner}
The cell $\cC(G)$ in $P\widetilde T_{\cP}(F)$ corresponding to the fatgraph $G$ contains a stable path of lambda lengths whose projection to $T(F)$ is asymptotic to a stable curve with pinch curves $\sigma$ if and only if $\sigma$ is homotopic to the collection of edge-paths $\partial \mathcal{A}$ for some screen $\mathcal{A}$ on $G$.
\end{theorem}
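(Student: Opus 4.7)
The plan is to establish the biconditional by translating the asymptotic behavior of lambda lengths along a stable path into the combinatorial data of a screen, and conversely to realize each screen by an explicit one-parameter family. I will outline each direction.

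Forward direction ($\Rightarrow$): Suppose $\widetilde{\Gamma}_t \in \cC(G)$ is a stable path whose projection to $T(F)$ converges in $\bar T(F)$ to a stable curve with pinch curves $\sigma$, and let $\bar\lambda_t$ be the projectivized lambda lengths with unique limit $\bar\lambda_\infty \in P\mathbb{R}_{\geq 0}^E$. I would set $A_1 = \{e \in E : \bar\lambda_\infty(e) > 0\}$, then peel off the next rate class by rescaling and comparing relative orders of growth of edges in $E \setminus A_1$, iterating to produce nested subsets $E \supsetneq A_1 \supsetneq A_2 \supsetneq \cdots$. The core combinatorial step is showing each $A_i$ is quasi recurrent. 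Here the generalized triangle inequalities of Remark \ref{rem:triangle}, valid throughout $\cC(G)$, are decisive: if $G(A_i)$ had a non-punctured vertex $v$ with only one incident edge $e \in A_i$ coming out of some complementary triangle, then the two other frontier lambda lengths at $v$ would grow strictly slower than $\lambda_t(e)$, eventually violating $\lambda_t(e) \leq \lambda_t(e') + \lambda_t(e'')$. Setting $\mathcal{A} = \{E, A_1, A_2, \ldots\}$, each boundary component of $F(G(A_i))$ is realized as a quasi efficient cycle in $G(A_i)$ by Lemma \ref{lem:rec}, whose hyperbolic length is controlled asymptotically by $\sum_i X(e_i) = 2\sum_i \alpha_i$ via the Telescoping Lemma \ref{lem:telescope}. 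The h-lengths entering the telescoping sum vanish precisely when the cycle lies on $\partial \mathcal{A}$, identifying the pinch locus with $\partial \mathcal{A}$.

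Backward direction ($\Leftarrow$): Given a screen $\mathcal{A}$ with boundary $\partial\mathcal{A}$ homotopic to $\sigma$, I would construct a stable path by prescribing $\lambda_t(e) = t^{d(e)} c(e)$, where $d(e)$ is the depth of $e$ in $\mathcal{A}$ and $c(e) > 0$ is a constant chosen so that the initial simplicial coordinates are strictly positive. Verifying that this path lies in $\cC(G)$ for all large $t$ reduces to checking that simplicial coordinates remain positive; quasi recurrence of each $A_i$ guarantees that whenever an edge $e$ sits strictly deeper than one of its frontier neighbors in some complementary triangle, at least two of the three sides of that triangle share the deeper depth, preserving the generalized triangle inequalities in the limit. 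To identify the pinch curves with $\partial\mathcal{A}$, I would apply the Telescoping Lemma a second time: boundary cycles of $F(G(A_i))$ contributing to $\partial_\mathcal{A} A_i$ traverse configurations whose h-lengths decay like negative powers of $t$, driving hyperbolic length to zero; simple closed curves not in $\partial\mathcal{A}$ can be realized by quasi efficient cycles whose telescoping h-lengths remain bounded below.

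Main obstacle: The principal difficulty is the quasi recurrence step in the forward direction, since deducing a purely combinatorial constraint on $G(A_i)$ from asymptotic geometric data requires a delicate induction on depth that interleaves the triangle inequality on $\cC(G)$ with the convex hull interpretation from Minkowski space recalled in Section \ref{sec:background}. A secondary subtlety is compatibility: one must check that convergence in the projective simplicial structure of $\cC(G)$ in $P\widetilde T_*(F)$ corresponds correctly under the forgetful projection to convergence in $\bar T(F)$. Both steps rely essentially on the fact, derived from the convex hull construction, that degenerations of lambda lengths within $\cC(G)$ occur only along admissible (elliptic or parabolic) configurations, so that the only pinch curves the Telescoping Lemma can detect are exactly those arising from some screen.
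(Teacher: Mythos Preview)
The paper does not prove Theorem~\ref{thm:McShanePenner}; it is quoted as a background result from \cite{[MP],[P3]}, and the surrounding text only remarks that the triangle inequalities force quasi recurrence while ``the difficult part of the proof \ldots\ is estimating the lengths of the curves in $\partial\mathcal{A}$ in terms of lambda lengths.''  So there is no in-paper argument to compare against, and I will comment on your sketch on its own terms.

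There is a bookkeeping slip in the forward direction: with $A_1=\{e:\bar\lambda_\infty(e)>0\}$ you have isolated the \emph{fastest}-diverging edges, and peeling the next rate class from $E\setminus A_1$ yields a set disjoint from $A_1$, not a subset of it; the chain $E\supsetneq A_1\supsetneq A_2\supsetneq\cdots$ as written is inverted relative to the screen convention (deeper members are smaller and diverge faster).  This is easily repaired by accumulating rate classes from the bottom up.

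The substantive gap is the identification of the pinch locus with $\partial\mathcal{A}$.  The Telescoping Lemma~\ref{lem:telescope} gives $\sum X(e_i)=2\sum\alpha_i$ for a quasi efficient cycle, where the $\alpha_i$ are \emph{h-lengths}, i.e., hyperbolic lengths of horocyclic segments.  These are not the hyperbolic lengths of the freely homotopic closed geodesics, and the passage from vanishing h-lengths along a cycle to vanishing geodesic length is precisely the ``difficult part'' the paper flags and defers to \cite{[MP]} (cf.\ Corollary~\ref{cor:boundariesequal}, which invokes Lemma~4.38 of \cite{[MP]} for exactly this step).  Your outline does not supply this estimate, and without it neither direction closes: in the forward direction you cannot conclude that the cycles with vanishing telescoped h-lengths are the pinch curves, and in the backward direction you cannot conclude that your explicit path $\lambda_t(e)=t^{d(e)}c(e)$ pinches exactly $\partial\mathcal{A}$ and nothing else.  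The triangle-inequality argument for quasi recurrence and the depth-weighted construction are both correct in spirit, but the hyperbolic-length estimate is the heart of the theorem and cannot be read off from the Telescoping Lemma alone.
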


The various members of a screen ${\mathcal A}$ correspond to subsets of edges whose lambda lengths diverge at least as fast as others, where $A\subset B\in {\mathcal A}$ if and only if the lambda lengths on edges in $A$ diverge faster than those in $B-A$; on the other hand, a screen does not keep track of the relative rates of divergence of disjoint elements.  In fact, it is the triangle inequalities discussed in Remark \ref{rem:triangle} that obviously force the recurrence condition.  The difficult part of the proof of the previous theorem is estimating the lengths of the curves in $\partial \mathcal{A}$ in terms of lambda lengths.

%------------------------------------------------------------
\subsection{Filtered screens}
\label{subsec:filtscreen}

We next introduce a refinement of screens which plays the central role in this paper, where in contrast to screens, we keep track of rates of vanishing of simplicial coordinates along quasi recurrent subsets of a q.c.d., and we record relative rates of vanishing of disjoint subsets.
We shall consider quasi recurrent subsets $A$ of a q.c.d.\  $E$ and their dual graphs $G(A)$, which as before  may have bivalent vertices which are not punctured.  Lambda lengths and simplicial coordinates of an edge $e\in A$  or $G(A)$ are defined to be those associated to $e$ in the original fatgraph $G(E)$.

\begin{definition}[Filtered screen]
\label{defn:filtscreen}
A {\em filtered screen of total level $n$} on a q.c.d.\ $E$ or its dual fatgraph $G(E)$ is an ordered $(n+1)$-tuple 
$\vec{\cE} = (L^0,L^1,\cdots,L^n)$
of pairwise disjoint and non-empty subsets of $E$ so that:
\begin{itemize}
	\item $\bigcup_{i=0}^n L^i =E$;
	\item for each $0 \leq k \leq n$, $L^{\geq k}=\bigcup_{i=k}^n L^i$ is quasi recurrent.
\end{itemize}
\noindent An arc $e\in L^k$ is said to have {\it level} $k$.
\end{definition}

\noindent Notice that a filtered screen $\vec{\cE}$ in particular determines its underlying q.c.d.\ $E=L^{\geq 0}$.  Moreover, a filtered screen of total level zero on $E$ is simply the q.c.d.\  $\vec{\cE}=(E)$ regarded as a one-tuple.

\begin{definition}[Boundary of a filtered screen]
In analogy to screens, for each $0 \leq k < n$, we have the subsurface $F(G( L^{\geq k+1})) \subset F(G(L^{\geq k}))$ and define the
{\em $(k+1)$-st relative boundary} $\partial_{k+1}\vec{\cE}$ to be those boundary components of $F(G(L^{\geq k+1}))$ which are neither puncture-parallel nor homotopic to a boundary component of $F(G(L^{\geq k}))$; if a component of $G(L^{\geq k+1})$ is a simple cycle which is not a component in $G(L^{\geq k})$, then include just that cycle in $\partial_{k+1}\vec{\cE}$.   The {\em boundary} of $\vec{\cE}$ itself is defined to be $\partial \vec{\cE} = \cup_{i=1}^n \partial_{i}\vec{\cE}$. 
\end{definition}

Given a filtered screen $\vec{\cE}$, there is an {associated screen}
\[
\cA(\vec{\cE})=\{L^{\geq k} : 0 \leq k \leq n\}.
\]
Although $\vec{\cE} = \vec{\cE}'$ if and only if $\cA(\vec{\cE}) = \cA(\vec{\cE}')$, 
not all screens arise in this way, only those which have a unique member at each depth.
Conversely, to each screen $\cA$ on $E$ is associated a filtered screen $\vec{\cE}(\cA)$
where the level of each $e\in E$ is its depth; the resulting 
$$L^{\geq k}=\{ e\in E:{\rm the~depth~of}~e~{\rm is~at~least}~k\}$$ are quasi recurrent because if $e \in L^{\geq k}$, then $e$ lies in a quasi recurrent subset $A\subseteq E$ of depth $i$ for some $k \leq i \leq n$, whence also $A \subset L^{\geq k}$.  

%Insofar as $\partial \cA = \partial \vec{\cE}(\cA)$ and $\partial \vec{\cE} = \partial\cA(\vec{\cE})$ by construction, we have the following reformulation of Theorem \ref{thm:McShanePenner}.

%\begin{corollary}
%\label{thm:FSversionMcShanePenner}
%The cell $\cC(G)$ in $P\widetilde T_{\cP}(F)$ corresponding to the fatgraph $G$ contains a stable path of lambda lengths whose projection to $T(F)$ is asymptotic to a stable curve with pinch curves $\sigma$ if and only if $\sigma$ is homotopic to the collection of edge-paths $\partial \vec{\cE}$ for some filtered screen $\vec{\cE}$ on $G$.
%\end{corollary}

This discussion is somewhat misleading since we shall employ filtered screens to record rates of vanishing of simplicial coordinates
rather than rates of divergence of lambda lengths as for screens, and these notions have yet to be related, cf.\ Proposition \ref{prop:stablepathyieldscreen}.

\begin{figure}[htbp]
	\centering
		\includegraphics[width=0.65\textwidth]{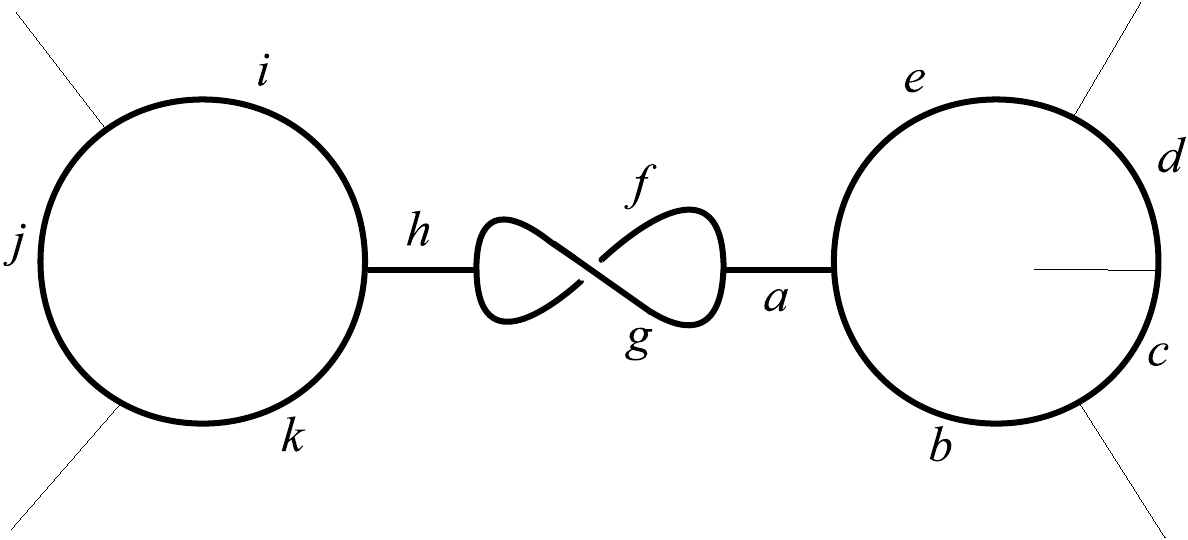}
	\caption{{\small A subfatgraph of $G(E)$.}}
	\label{fig:FiltScreensExampleB}
\end{figure}

\begin{example}
\label{ex:filtscreen}
Consider the subfatgraph of a fatgraph $G(E)$ depicted in Figure \ref{fig:FiltScreensExampleB},
and adopt the notation that ``$m-n$'' means the set of labeled edges lexicographically between $m$ and $n$ and including $m,n$.
The two screens
\[
\cA = \{E, a-k, a-g, i-k, f-g, b-e\}
\]
and
\[
\cA' = \{E, a-k, (a-g)\cup(i-k), (f-g)\cup(b-e)\}
\]
are distinct,
however, $\vec{\cE}(\cA) = \vec{\cE}(\cA') = \vec{\cE}$, where the filtered screen is
\[
\vec{\cE} = (E-(a-k), h, a\cup(i-k), (b-e)\cup(f-g)).
\]
\end{example}

%----------------------------------------------------------------------------------
\subsection{The space of filtered screens}
\label{subsec:spaceFS}

In this section, we construct a simplicial complex whose simplices are indexed by filtered screens on q.c.d.'s based at $\cP$ for a fixed surface $F$.
Let us adopt the notation that for filtered screens $\vec{\cE}$ and $\vec{\cE}'$, the respective sets of arcs of level
$k$ are denoted $L^k$ and $(L')^k$.

\begin{definition}
\label{defn:cellassocfiltscreen}
Given a filtered screen $\vec{\cE}$ of total level $n$, the {\em (open) cell associated to $\vec{\cE}$} is a product of (open) simplices defined as  
\[
\cC(\vec{\cE}) = P\mathbb{R}_{>0}^{L^0} \times P\mathbb{R}_{>0}^{L^1} \times \cdots \times P\mathbb{R}_{>0}^{L^n}
\]
 
\noindent We have included the word {\em open} in the above definition because (as will be made precise in the next definition) the boundary of this cell is not the ordinary boundary of a product of simplices, but rather a blowup of the ordinary boundary.

\end{definition}

\begin{definition}
\label{defn:spaceoffilteredscreens} For any surface $F$, define the
{\em space of filtered screens based at $\cP$} 
\[
\cF\cS_{\cP}(F)= \left[\bigsqcup_{{{\textrm{filtered screens }\vec{\cE}\textrm{ on some}}\atop{{\cP-{\textrm {based q.c.d.\ of}}~F}}}}
\cC(\vec{\cE})\right]/\sim
\]
where the face relation $\sim$ is generated by the following:

\begin{itemize}
\item $\cC(\vec{\cE'})$ is a codimension-one face of $\cC(\vec{\cE})$ if there exists an edge $e \in L^k \subset E$ whose dual in $G(E)$ has distinct endpoints, at least one of which is non-punctured and disjoint from $G(L^{\geq k+1})$, and such that $(L')^k = L^k - \{e\}$ and $(L')^j = L^j$ for $j \neq k$.   
In other words, we pass from $\vec{\cE}$ to $\vec{\cE'}$ by removing such an arc from $E$; alternatively, we collapse the dual edge in $G(E)$.
Removing such an arc corresponds to setting the associated simplicial coordinate to zero in the factor simplex corresponding to $L^k$.
	
\medskip
	
\item $\cC(\vec{\cE'})$ is a codimension-one face of $\cC(\vec{\cE})$ if $\vec{\cE}$ arises from  $\vec{\cE'}$ by combining two adjacent levels and shifting up by one all greater levels, i.e.,  we have
$$
\hskip 6ex \vec{\cE} = ((L')^0,(L')^1,\cdots,(L')^{k-1},(L')^k \cup (L')^{k+1},(L')^{k+2},\cdots,(L')^n),
$$
for some $0 \leq k < n$.  In terms of coordinates, moving from $\cC(\vec{\cE})$ to $\cC(\vec{\cE'})$ corresponds to choosing a quasi recurrent subset $(L')^{\geq k+1}$ such that $L^{\geq k+1} \subsetneq (L')^{\geq k+1} \subsetneq L^{\geq k}$, and then allowing the simplicial coordinates of $(L')^{\geq k+1} - L^{\geq k+1}$ in $P\mathbb{R}_{>0}^{L^k}$ to go to zero all at comparable rates.  In the limit, $(L')^k=L^{\geq k} - (L')^{\geq k+1}$ and $(L')^{k+1}=(L')^{\geq k+1} - L^{\geq k+1}$ are separately projectivized.
\end{itemize}
\end{definition} 

\noindent These face relations will be explicated in the sequel where we demonstrate that they precisely describe the asymptotics of stable paths constrained to lie in cells in $\cP$-decorated Teichm\"{u}ller space.

We shall denote by $\cF\cS(F)$ the space of filtered screens on i.c.d.'s of $F$ for which all punctures are decorated and refer to $\cF\cS(F)$ simply as the {\em space of filtered screens on $F$}.
Note that the cell decomposition for $\cF\cS(F)$ is $MC(F)$-invariant by construction.
The quotient
$\cF\cS(F)/MC(F)$ consists of finitely many cells, since there are finitely many fatgraphs on $F$ modulo the action of $MC(F)$, and then finitely many ways to partition edges on each of these fatgraphs.  The quotient $\cF\cS(F)/MC(F)$ therefore provides a new compactification of Riemann's moduli space ${\mathcal M}(F)$.

\begin{example}
Figure \ref{fig:PPDTB+}a depicts the cell decomposition of $P\widetilde T(F_0^3)$, where the small circles and dashed lines respectively represent zero- and one-simplices that are absent from $P\widetilde T(F_0^3)$.   
Figure \ref{fig:PPDTB+}b illustrates the space of filtered screens for $F_0^3$, where the added faces
are labeled by the corresponding filtered screens.

\begin{figure}[htbp]
	\centering
		\includegraphics[width=0.85\textwidth]{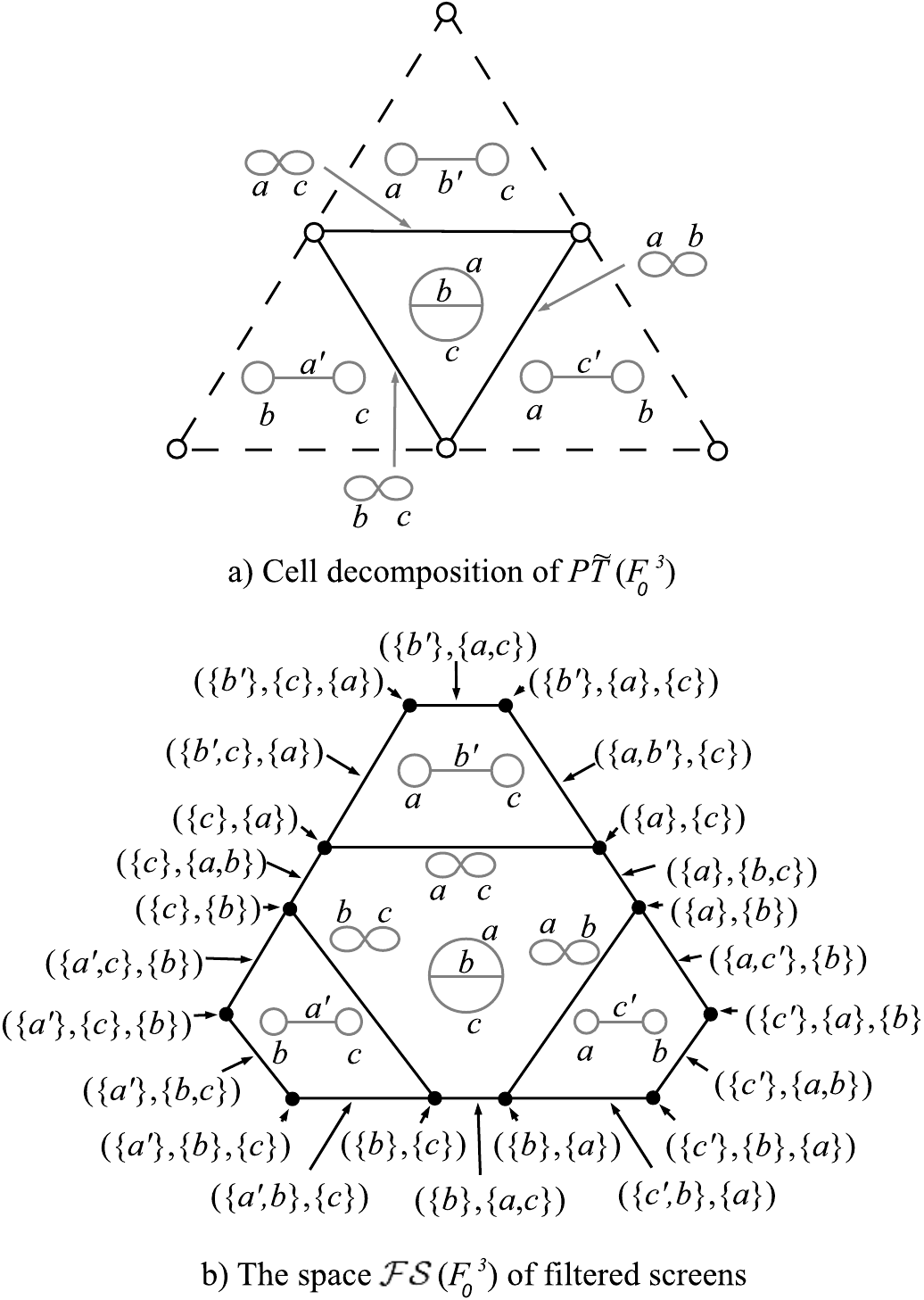}
	\caption{{\small Filtered screen blow up of cell decomposition for $F_0^3$.}}
	\label{fig:PPDTB+}
\end{figure}
\end{example}

%---------------------------------------------------------------------
\subsection{Stable paths and the Filtered IJ Lemma}
\label{subsec:ijlemma}

Fix a q.c.d.\ $E$ of a surface $F$ and consider its corresponding cell $\cC(E)\subset P\widetilde{T}_{\cP}(F)$.  
To each point of $\cC(E)$ is associated its tuple of projectivized simplicial coordinates and lambda lengths on the edges of $E$.  
By adding arcs if necessary to quasi triangulate regions complementary to $E$, we may and shall assume that $E$ is a quasi triangulation.
While the simplicial coordinates of the added arcs vanish, their lambda lengths do not.

We may thus regard  a continuous path in $\cC(E)$ as its associated continuous one-parameter family of projectivized simplicial coordinates  $\bar{X}_t(e)\geq 0$ or lambda lengths $\bar{\lambda}_t(e) >0$, defined for each edge $e \in E$ and all $t\geq 0$.
It will sometimes be convenient to de-projectivize these paths in various ways to produce corresponding one-parameter families
${X}_t(e)\geq 0$ and ${\lambda}_t(e) >0$, which we may also regard as paths in the deprojectivized $\cC(E)$ lying in $\widetilde T_{\cP}(F)$ itself.

Our primary immediate goal is to show that to each stable path of simplicial coordinates in $\cC(E)$, there is a canonical assignment of a point  in a cell $\cC(\vec{\cE})$ associated with a filtered screen based on a q.c.d.\ contained in $E$ which is  also based at $\cP$.  This will be accomplished in the next sub-section after first here recalling and slightly extending several results from \cite{[P3]}.

\begin{lemma}
\label{lemma:triangleinequalities}
Given any point in a deprojectivized $\bar\cC(E)$ for any quasi triangulation $E$, the associated simplicial coordinates on $E$ satisfy the {\rm no quasi vanishing cycle condition}: there is no quasi efficient cycle of edges in $G(E)$ each of whose simplicial coordinates vanishes.
\end{lemma}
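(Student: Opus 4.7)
The plan is to deduce this directly from the Telescoping Lemma \ref{lem:telescope} via a positivity argument. Suppose, for contradiction, that $\gamma$ is a non-trivial quasi efficient cycle in $G(E)$ consecutively traversing edges $e_1, e_2, \ldots, e_n$ with $X(e_i) = 0$ for every $i$. Applying Lemma \ref{lem:telescope} yields
$$0 \;=\; \sum_{i=1}^n X(e_i) \;=\; 2\sum_{i=1}^n \alpha_i,$$
where $\alpha_i$ denotes the h-length of the horocyclic sector at the vertex where $\gamma$ passes from $e_i$ to $e_{i+1}$.

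The crux is then the observation that every $\alpha_i$ is strictly positive. At a non-punctured (necessarily trivalent) vertex this is immediate from the explicit formula expressing an h-length as the opposite lambda length divided by the product of the two adjacent lambda lengths, all of which are positive on any point of $\widetilde{T}_{\cP}(F)$. At a punctured vertex---the only place where quasi efficiency permits $\gamma$ to immediately reverse along an edge---the included horocyclic sector is one of the arcs into which the peripheral horocycle is cut by the incident edge (or the full peripheral horocycle, in the case of a univalent punctured vertex), and hence is again a nondegenerate arc of strictly positive hyperbolic length. Summing these positive quantities then contradicts $\sum \alpha_i = 0$, and so no such cycle can exist.

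I do not anticipate any substantive obstacle; the whole argument is a short application of Telescoping combined with the geometric positivity of horocyclic arc lengths, and in particular does not use the quasi triangulation hypothesis beyond the fact that $X(e)$ and the $\alpha_i$ are well-defined on $\cC(E)$. The only point meriting a careful sanity check is that the convention for the "included h-length at a quasi efficient reversal" used in Lemma \ref{lem:telescope} matches the convention invoked implicitly in the statement of the quasi vanishing cycle condition, so that the sum over $i$ really is termwise strictly positive regardless of whether any of the consecutive pairs $(e_i,e_{i+1})$ involve such a reversal at a punctured vertex.
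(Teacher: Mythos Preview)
Your argument is correct and takes a genuinely different route from the paper. The paper's one-line proof invokes Theorem~\ref{thm:icd}: any point of $\cC(E)$ (or its closure) arises as some $\widetilde\Gamma$ with $E(\widetilde\Gamma)=E'\subseteq E$ a q.c.d., so the zero-set of the simplicial coordinates is exactly $E-E'$, whose dual in $G(E)$ is a forest each of whose trees contains at most one punctured vertex; such a forest supports no non-trivial quasi efficient cycle. Your approach bypasses the convex hull geometry entirely, reducing the statement to the Telescoping Lemma plus strict positivity of the $\alpha_i$. This is more elementary and self-contained, and the same positivity mechanism reappears later in the paper in asymptotic form (e.g.\ in the proof of the Filtered IJ Lemma).

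One small imprecision in your justification at a punctured vertex: since that puncture is by construction \emph{un}decorated, there is no peripheral horocycle there to speak of. In a quasi triangulation every punctured vertex is univalent (dual to a once-punctured monogon), and the ``included h-length'' at the reversal is the quantity $\phi=2/\lambda(e)$ from Figure~\ref{fig:hlengthB}a bottom, which is the h-length of the horocyclic segment at the \emph{decorated} boundary cusp of the monogon rather than at the interior undecorated puncture. This does not affect your conclusion---$\phi>0$ either way---but the geometric description should be adjusted accordingly.
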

\begin{proof}
This follows from the fact that the convex hull construction of a $\cP$-decorated surface produces a q.c.d.\ 
based at $\cP$ by Theorem \ref{thm:icd}.  Specifically, given a point in $\bar\cC(E)$, the convex hull construction shows that the simplicial coordinates for edges in $G(E)$ will be positive for a subset of $E$ which is a q.c.d.  This q.c.d.\ then divides the surface $F$ into polygons bounded by arcs with positive simplicial coordinates.  As a result, no quasi efficient cycle with vanishing simplicial coordinates can exist.
\end{proof}

\begin{lemma}
\label{lemma:upperboundonproduct}
Given any point in a deprojectivized $\cC(E)$ and any arc $e$ in the quasi triangulation $E$, the product of the lambda length of $e$ and the simplicial coordinate of $e$ is bounded above by four.
\end{lemma}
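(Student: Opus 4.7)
The plan is to compute $e\cdot X(e)$ directly from the formulas for the simplicial coordinate displayed just before Figure~\ref{fig:StandardQuadB}, and then apply the generalized triangle inequalities from Remark~\ref{rem:triangle} to the (necessarily unpunctured) triangles bounding $e$. There are exactly two cases to consider, since in a quasi triangulation every complementary region is a triangle or a once-punctured monogon: either both regions incident to $e$ are triangles (the quadrilateral case), or one side of $e$ is a once-punctured monogon whose boundary loop is $e$, with the other side being an ordinary triangle.

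In the quadrilateral case, multiplying the displayed formula for $X(e)$ through by $e$ gives
$$e\cdot X(e)\;=\;\frac{a^2+b^2-e^2}{ab}\;+\;\frac{c^2+d^2-e^2}{cd}.$$
The key algebraic identity is the factorization
$$2ab-(a^2+b^2-e^2)\;=\;e^2-(a-b)^2\;=\;(e+a-b)(e-a+b).$$
Since the triangle with frontier edges $a,b,e$ is unpunctured, Remark~\ref{rem:triangle} yields the strict triangle inequalities $a<b+e$ and $b<a+e$, so both factors on the right are positive. Hence $\frac{a^2+b^2-e^2}{ab}\leq 2$, and the identical argument applied to the triangle with sides $c,d,e$ gives $\frac{c^2+d^2-e^2}{cd}\leq 2$. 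Summing yields $e\cdot X(e)\leq 4$.

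In the once-punctured monogon case, multiplying the second displayed formula for $X(e)$ through by $e$ gives
$$e\cdot X(e)\;=\;\frac{a^2+b^2-e^2}{ab}\;+\;2,$$
where $a,b,e$ are the sides of the unpunctured triangle on the other side of $e$. Applying the same factorization and triangle inequalities as above bounds the first summand by $2$, and the overall bound $e\cdot X(e)\leq 4$ follows.

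There is no real obstacle here beyond recognizing that the sought estimate $\frac{a^2+b^2-e^2}{ab}\leq 2$ is a restatement of $(a-b)^2\leq e^2$, which is precisely the pair of triangle inequalities furnished by Remark~\ref{rem:triangle}. I note in passing that the inequality is actually strict whenever $e$ is adjacent to an unpunctured triangle, which covers both cases of the lemma; this sharpness will presumably be convenient for the asymptotic analysis that follows, since it means the bound is only approached as a limit when a triangle degenerates.
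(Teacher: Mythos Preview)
Your proof is correct and follows essentially the same approach as the paper: multiply $X(e)$ by $e$, split into the quadrilateral and monogon cases, and bound each summand $\tfrac{a^2+b^2-e^2}{ab}$ by $2$ using the triangle inequalities from Remark~\ref{rem:triangle}. The only cosmetic difference is that the paper phrases the bound via the Euclidean law of cosines (recognizing the summand as $2\cos\theta$ for a Euclidean triangle with sides $a,b,e$), whereas you give the equivalent direct factorization $2ab-(a^2+b^2-e^2)=(e+a-b)(e-a+b)$.
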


\begin{proof}
If $e$ is not incident on a punctured vertex, in the notation of Figure \ref{fig:StandardQuadB} we have
\[
eX(e) = \frac{a^2 + b^2 - e^2}{ab} + \frac{c^2 +d^2 - e^2}{cd}
\]
so each summand is twice a cosine by Remark \ref{rem:triangle} and the Euclidean law of cosines.  If $e$ is incident on a univalent punctured vertex, in the notation of Figure \ref{fig:StandardQuadB} we have
\[
eX(e) = \frac{a^2 + b^2 - e^2}{ab} + 2
\]
which is again bounded above by four.
\end{proof}

\begin{lemma}
\label{lemma:lambdalengthsboundedbelow}
Fix a point in a deprojectivized $\cC(E)$ and suppose that $a,b,e$ are the frontier edges of a triangle complementary to $E$
with respective opposite h-lengths $\alpha,\beta,\epsilon$.   If $1\leq a,b,e$ and $\alpha\leq K<1$, then we have
$b,e\geq \frac{1}{2\sqrt{K}}$.
\end{lemma}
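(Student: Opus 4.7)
The plan is to combine the h-length identity for a decorated ideal triangle recalled from Figure~\ref{fig:hlengthB}a with the strict triangle inequalities furnished by Remark~\ref{rem:triangle}. Since the complementary region in question is an (unpunctured) triangle, the h-length opposite the edge of lambda length $a$ is
\[
\alpha = \frac{a}{be},
\]
so the hypothesis $\alpha \leq K$ reads $a \leq Kbe$; combined with $a \geq 1$, this already yields the one-sided bound $be \geq a/K \geq 1/K$.

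I would next establish $b \geq 1/(2\sqrt{K})$ by contradiction; the bound on $e$ then follows by the symmetric argument using the triangle inequality $b \leq a + e$ in place of $e \leq a + b$. Assume $b < 1/(2\sqrt{K})$. Together with $be \geq 1/K$, this assumption forces
\[
e \;\geq\; \frac{1}{Kb} \;>\; \frac{2}{\sqrt{K}}.
\]
On the other hand, substituting $a \leq Kbe$ into the triangle inequality $e \leq a + b$ of Remark~\ref{rem:triangle} rearranges to $e(1 - Kb) \leq b$; since $K < 1$ and $b < 1/(2\sqrt{K})$ imply $Kb < \sqrt{K}/2 < 1/2$, we have $1 - Kb > 1/2$, and therefore $e < 2b < 1/\sqrt{K}$, contradicting the lower bound $e > 2/\sqrt{K}$ derived a moment ago.

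The step that most requires care is ensuring that the factor $1 - Kb$ stays bounded below by a definite positive constant, since otherwise rearranging $e(1-Kb) \leq b$ yields either a vacuous or a sign-reversed bound on $e$. This is precisely where the strict hypothesis $K < 1$ is used (rather than merely a bound on $K$), and the factor-of-two gap between the competing estimates $e > 2/\sqrt{K}$ and $e < 1/\sqrt{K}$ is what permits the contradiction to close off cleanly.
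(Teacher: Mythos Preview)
Your proof is correct and uses the same two ingredients as the paper's argument: the h-length identity $\alpha=a/(be)$ together with the triangle inequalities from Remark~\ref{rem:triangle}. The organization differs slightly. The paper works with reciprocals throughout: from $\tfrac{1}{b}\cdot\tfrac{1}{e}\leq K$ it deduces that one of $\tfrac{1}{b},\tfrac{1}{e}$ is at most $\sqrt{K}$, and then uses the difference form $|b-e|<a$ of the triangle inequality (divided by $be$) to get $|\tfrac{1}{e}-\tfrac{1}{b}|\leq K$, whence the other reciprocal is at most $\sqrt{K}+K\leq 2\sqrt{K}$. You instead argue by contradiction using the sum form $e\leq a+b$, substituting $a\leq Kbe$ and controlling the factor $1-Kb$. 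Both routes are equally elementary; the paper's version is a touch more symmetric in $b$ and $e$ and avoids the contradiction scaffolding, while yours makes the role of the hypothesis $K<1$ (needed to bound $1-Kb$ away from zero) a bit more explicit.
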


\begin{proof}
We have $\alpha = \frac{a}{be} \leq K$ and $a \geq 1$ whence $K \geq \frac{a}{be} \geq \frac{1}{be}$, and so

\begin{equation}
\label{eqn:firstinlemma}
\frac{1}{b}\frac{1}{e} \leq K.
\end{equation}
Furthermore by Remark \ref{rem:triangle}, we have $|b-e| < a$, and so dividing by $be$ we find also
\begin{equation}
\label{eqn:secondinlemma}
|\frac{1}{e}-\frac{1}{b}| < \frac{a}{be} \leq K.
\end{equation}
It follows from inequality (\ref{eqn:firstinlemma}) that either $\frac{1}{b} \leq \sqrt{K}$ or $\frac{1}{e} \leq \sqrt{K}$.  Without loss of generality concentrating on the former case and using inequality (\ref{eqn:secondinlemma}), we conclude
\[
\frac{1}{e} \leq \frac{1}{b} + K \leq \sqrt{K} + K \leq 2\sqrt{K}
\]
as was claimed.
\end{proof}

\begin{definition}
\label{defn:IJ}
Given a deprojectivized path $\lambda_t$ in a deprojectivized $\cC(E)$, and a subset $H \subset E$, define
\[
I(H) = \{ e \in H: \lim_{t\rightarrow\infty}\lambda_t(e) = \infty\}
\]
and
\[ 
J(H) = \{ e \in H: \lim_{t\rightarrow\infty}X_t(e) = 0\}
\]
\end{definition}

The following result is of central importance in the next section and specializes to the so-called IJ Lemma of \cite{[MP],[P3]}
for $H=E$:

\begin{lemma}[Filtered IJ Lemma]
\label{lemma:FilteredIJLemma}
Suppose that $E$ is a quasi triangulation with dual punctured fatgraph $G(E)$.  Let $H \subset E$, with $G(H)$ a quasi recurrent subfatgraph component that is not a simple cycle; that is, $G(H)$ may have bivalent vertices but has at least one trivalent or punctured vertex.
Suppose that a deprojectivized path in $\cC(E)$ has $\lambda_t(f)>1$ for all $f \in G(H)$ and that each edge $e$ of $G(E-H)$ incident on a bivalent vertex of $G(H)$  has $\lambda_t(e)\to 0$.  Then $I(H) \subset J(H)$, and $G(I(H))$ is the maximal quasi recurrent subfatgraph of $G(J(H))$.
\end{lemma}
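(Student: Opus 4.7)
The plan divides into three parts: the easy inclusion $I(H)\subseteq J(H)$, quasi recurrence of $G(I(H))$, and maximality of $G(I(H))$ as a quasi recurrent subfatgraph of $G(J(H))$.

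The inclusion $I(H) \subseteq J(H)$ is immediate from Lemma \ref{lemma:upperboundonproduct}: if $\lambda_t(e)\to\infty$, then the bound $\lambda_t(e)X_t(e)\le 4$ forces $X_t(e)\to 0$. For quasi recurrence of $G(I(H))$, I would verify that every non-punctured vertex $v$ of $G(I(H))$ has at least two incident edges in $I(H)$. Picking $e\in I(H)$ at $v$ and writing $e,e',e''$ for the three sides of the ideal triangle dual to $v$, the generalized triangle inequality (Remark \ref{rem:triangle}) forces at least one of $e',e''$ to diverge. When $v$ is trivalent in $G(H)$, the diverging edge automatically lies in $H$ hence in $I(H)$; when $v$ is bivalent in $G(H)$, the third edge lies in $E-H$ and vanishes by hypothesis, so the triangle inequality forces the remaining $H$-edge to diverge. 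The univalent case is precluded by quasi recurrence of $G(H)$ itself.

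The main content is maximality. I would show any quasi recurrent $K\subseteq G(J(H))$ and any $e\in K$ satisfy $\lambda_t(e)\to\infty$. By Lemma \ref{lem:rec}, there is a quasi efficient cycle $\gamma=(e_1,\ldots,e_n)$ in $K$ through $e$. Since each $X_t(e_j)\to 0$, the Telescoping Lemma gives $\sum \alpha_j\to 0$ and nonnegativity forces each $\alpha_j\to 0$. I would then classify each vertex $v_j$ of $\gamma$ by its valence in $G(H)$: trivalent, in which case the third edge $f_j$ lies in $H$ with $\lambda_t(f_j)>1$ and Lemma \ref{lemma:lambdalengthsboundedbelow} yields $\lambda_t(e_j),\lambda_t(e_{j+1})\ge 1/(2\sqrt{\alpha_j})\to\infty$; bivalent, in which case $f_j\in E-H$ vanishes by hypothesis and the triangle inequality gives $|\lambda_t(e_j)-\lambda_t(e_{j+1})|\to 0$; or univalent-punctured, in which case the monogon h-length formula $\alpha_j=2/\lambda_t(e_j)$ forces $\lambda_t(e_j)\to\infty$.

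The main obstacle, and the step that invokes the hypothesis that $G(H)$ is not a simple cycle, is ruling out the scenario in which every $v_j$ is bivalent in $G(H)$. In that case the two cycle edges at each $v_j$ would exhaust the $G(H)$-edges at $v_j$, and, since a quasi efficient cycle cannot bounce at a trivalent non-punctured vertex of $G(E)$, $\gamma$ would be a simple cycle with no $G(H)$-edge attached elsewhere along $V(\gamma)$; connectedness of $G(H)$ would then force $G(H)=\gamma$, contradicting the hypothesis. Once this pathological configuration is excluded, at least one $v_j$ is trivalent or punctured, producing a cycle edge with diverging lambda length, and divergence then propagates around $\gamma$ via the asymptotics above — the Lemma across trivalent vertices, the comparison $|\lambda_t(e_j)-\lambda_t(e_{j+1})|\to 0$ across bivalent vertices, and the trivial identification at punctured vertices — placing every $e_j$, and in particular $e$, in $I(H)$. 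Thus $K\subseteq G(I(H))$, completing the maximality.
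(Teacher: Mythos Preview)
Your proof is correct and follows essentially the same approach as the paper: the inclusion $I(H)\subseteq J(H)$ via Lemma~\ref{lemma:upperboundonproduct}, the vertex-by-vertex analysis of a quasi efficient cycle in $G(J(H))$ using the Telescoping Lemma and Lemma~\ref{lemma:lambdalengthsboundedbelow}, and the invocation of the ``not a simple cycle'' hypothesis to rule out the all-bivalent case are exactly what the paper does. The only cosmetic difference is that you verify quasi recurrence of $G(I(H))$ by directly checking there are no non-punctured univalent vertices, whereas the paper builds a quasi efficient cycle through each edge of $G(I(H))$; these are equivalent by Lemma~\ref{lem:rec}.
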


\begin{proof}
If $\lambda_t(f) \rightarrow \infty$, then $X_t(f) \rightarrow 0$ by Lemma \ref{lemma:upperboundonproduct}, so we indeed have $I(H) \subset J(H)$.

To see that the maximal quasi recurrent subfatgraph of $G(J(H))$ is contained in $G(I(H))$, recall that
 a subfatgraph is quasi recurrent if and only if for each of its edges, there is a quasi efficient cycle traversing the edge by Lemma
\ref{lem:rec}.  If $\gamma$ is a quasi efficient cycle in $G(J(H))$, then $X_t(e) \rightarrow 0$ for each edge $e$ traversed by $\gamma$
by definition.  According to the Telescoping Lemma \ref{lem:telescope}, we have $\sum X_t(e) = 2\sum \alpha_t(e)$, so the sum of the h-lengths included in $\gamma$ goes to zero.  If an edge $e$ in $\gamma$ is incident on a univalent punctured vertex, then since $\lambda(e) = 2/\alpha(e)$, where $\alpha$ is the included h-length, we see that $e \in G(I(H))$.  Now, consider a vertex in $\gamma$ with edges $a,b,e$ incident upon it, where $\gamma$ traverses $b,e$ and includes the h-length $\alpha$ opposite $a$.
In case all three of $a,b,e \in H$, then by Lemma \ref{lemma:lambdalengthsboundedbelow}, we have $\lambda_t(b),\lambda_t(e) \rightarrow \infty$ since $\alpha \rightarrow 0$ as required.  
It remains therefore to consider the case that $a \notin H$, so that since $G(H)$ is not a simple cycle,
$b$ and $e$ must lie in a linear chain of edges connecting vertices bivalent in $H$ that begins and ends at vertices either trivalent or punctured univalent in $H$.
Since all incident  edges not in $H$ along this chain have vanishing lambda lengths by assumption, all of the lambda lengths on the edges of this linear chain must have ratios asymptotic to unity by the triangle inequalities.
Since the first and last edges in this chain must have lambda lengths which diverge by earlier remarks, in fact all of these edges must have lambda lengths which diverge.  Thus, each edge traversed by $\gamma$ lies in $G(I(H))$ as required.

For the reverse inclusion that $G(I(H))$ is contained in the maximal recurrent subfatgraph of $G(J(H))$, suppose that $e$ is an edge with $\lambda_t(e) \rightarrow \infty$.  As before,  $X_t(e) \rightarrow 0$, so $e\in G(J(H))$.  Thus, we need only show that
there is a quasi efficient cycle in $G(J(F))$ traversing $e$.  To this end, if the endpoints of $e$ coincide, then there is such a cycle of length one.  Otherwise by the triangle inequalities, there is an edge $f$ adjacent to $e$ at either of its non-puncture endpoints such that $\lambda_t(f) \rightarrow \infty$, so $X_t(f) \rightarrow 0$ by Lemma \ref{lemma:upperboundonproduct}.  Continuing in this manner, we find a quasi efficient cycle lying in $G(I(F))$ passing through $e$, which evidently also lies in $G(J(F))$ as desired.
\end{proof}

%--------------------------------------------------------------------
\subsection{Stable paths and filtered screens}
\label{subsec:stablepathfiltscreen}

We shall next canonically associate a point of $\cF\cS_{\cP}(F)$ to a stable path in $\cC(E)\subset P\widetilde T_{\cP}(F)$.

\begin{definition}
\label{def:comparison}
Fix a stable path in $\cC(E)$ with associated deprojectivized simplicial coordinates $X_t(e)$,
for $t\geq 0$ and $e\in E$.  The simplicial coordinates of two edges $e,f\in E$ are said to be {\em comparable} if 
$
0 < \lim_{t\rightarrow\infty}\frac{X_t(e)}{X_t(f)} < \infty
$
and be {\em asymptotically equal} 
provided
$
\lim_{t\rightarrow\infty}\frac{X_t(e)}{X_t(f)} = 1.
$
The corresponding terms of asymptotic comparison among lambda lengths are defined analogously.
\end{definition}

\begin{definition}[Comparability filtration]
Suppose that $X_t$, for $t\geq 0$, is a deprojectivization, which is bounded above, of a stable path in $\bar\cC(E)$ for some q.c.d.\ $E$. Recursively define a nested collection $E=E_0\supset E_1\supset\cdots\supset E_n$ of sets as follows.  If $X_t(e_k) \neq 0$ for some $e_k \in E_k$, choose some $e_k \in E_k$ so that $\lim_{t \to \infty} \frac{X_t(e_k)}{X_t(e)} > 0$ for all $e \in E_k$; define $E_{k+1}\subset E_k$
to be those edges that are not comparable to $e_k$.
Thus, any two elements of
$E_k-E_{k+1}$ are comparable, for $k=0,\ldots ,n$, where we set $E_{n+1}=\emptyset$ for convenience.   The collection $E_0\supset E_1\supset\cdots \supset E_n$ is evidently invariant under overall scaling of simplicial coordinates and is called the
{\em comparability filtration} on $E$ induced by the stable path $\bar X_t$.
\end{definition} 

\begin{definition}[Recurrence hierarchy]
Given a stable path $\bar X_t$ in $\bar\cC(E)$ for some q.c.d.\ $E$, along with its comparability filtration $E=E_0\supset E_1\supset\cdots\supset E_n$, observe that each $E_j$ has a maximal quasi recurrent subset $R_j$; we then define $L_j=R_j \cap (E_j - E_{j+1})$.  This gives an initial $n$-tuple $(L_0,L_1,\cdots,L_n)$ which we further modify as follows:  if $L_j = \emptyset$ we remove it from the tuple and relabel $L_{j+i}$ to be $L_{j+i-1}$ for $i > 0$.  After removing all such empty sets, the result is a {\it recurrence hierarchy} $(L_0,L_1,\cdots, L_k)$ of non-empty sets for some $k \leq n$, associated with the given stable path of simplicial cooordinates.
\end{definition}

%Observe that two different comparability filtrations may be assumed to have the same length by adjoining empty sets to the end of the chain of inclusions if necessary.

We come to our first important result whose proof will serve two purposes:  first, it will show that each stable path $\bar X_t$ in $\cC(E)\subset P\widetilde T_{\cP}(F)$ limits to a unique {\em limiting filtered screen} $\vec{\cE}(\bar X_t)$ associated to a $\cP$-based q.c.d.\ contained in $E$, as well as to a unique {\em limiting point} $p(\bar X_t) \in \cC(\vec{\cE}(\bar X_t))$; second, it will show that $\bar X_t$ produces a screen of comparability classes of lambda lengths whose boundary matches the boundary of $\vec{\cE}(\bar X_t)$, and thus proves that the short curves associated with $\bar X_t$ are precisely those at the boundary of the filtered screen.

\begin{proposition}
\label{prop:stablepathyieldscreen}
Let $E$ be a q.c.d.\ based at $\cP$, and let $\bar{X}_t$ be a stable path in the cell $\cC(E)\subset P\widetilde T_{\cP}(F)$ corresponding to some q.c.d. $E$ of the surface $F$.  Then there is a canonically determined limit point $p(\bar{X}_t)\in \cF\cS_{\cP}(F)$ in the cell $\cC(\vec{\cE}(\bar{X}_t))$ of a corresponding filtered screen $\vec{\cE}(\bar{X}_t)$ on a $\cP$-based q.c.d.\ contained in $E$ with the following properties:

\begin{itemize}
\item Two such paths $\bar X_t$ and $\bar X_t'$ in a common $\cC(E)$  have the same associated filtered screens
$\vec{\cE}(\bar{X}_t)=\vec{\cE}(\bar{X}_t')$ if and only if their respective recurrence hierarchies $(L_0,\cdots,L_k)$ and $(L'_0,\cdots,L'_k)$ are equal.

\smallskip 

\item
Moreover, the stable paths have the same associated points $p(\bar{X}_t)=p(\bar{X}_t')$ if and only if
they further have the same limiting ratios of simplicial coordinates on 
$L_j=L'_j$, for each $j=0,\ldots ,k$.
\end{itemize}
\end{proposition}

\begin{proof}
Choose a quasi triangulation $E''$ containing $E$, so that $\mathcal{C}(\mathcal{E}) \subseteq \overline{\mathcal{C}}(\mathcal{E}'') \subset P\widetilde{T}_{\mathcal{P}}(F)$.  We shall algorithmically construct $p(\bar X_t)\in \cC(\vec{\cE}(\bar X_t))$ via a recursion, which will use repeated applications of the Filtered IJ Lemma and be shown to be independent of the choice of $E''$.  For the purposes of this proof,  given a set $H$ of edges in a q.c.d.\
such that the deprojectivized $X_t(f)$ vanishes for each $f \in H$, let us define
\[
H^\infty = \{f \in H | \lim_{t\rightarrow\infty} \frac{X_t(f)}{\sup_{e\in H} X_t(e)} = 0\}.
\]
In other words, $H^\infty$ consists of those edges whose simplicial coordinates go to zero faster than the slowest in $H$.

To begin the recursion, define the set $D_{0} = E''$ and the sets $B_{0} = Z_{0} = C_{0} = \emptyset$.  Let us choose a deprojectivization $\lambda _t$ of the path in lambda lengths corresponding to $\bar X_t$ so that $\lambda_t(e) >  1$, for each $e\in D_0$.  Such a deprojectivization exists, since even if $\bar\lambda_t(e)$ goes to zero for some $ e \in D_0$, we may rescale all lambda lengths by the reciprocal of a function which goes to zero just as quickly as any of $\bar\lambda_t$.

The Filtered IJ Lemma thus applies to $H=D_0=E''$ in $E''$ (since $E''$ has only univalent punctured vertices with all other vertices trivalent) and yields $I(D_{0})\subseteq J(D_{0})$.  We separate the subfatgraph $G(I(D_{0}))$ into its components and label the associated collections of edges  $I_1,\cdots,I_m$.  The general recursion producing new sets $B_{i+1},D_{i+1} Z_{i+1},C_{i+1}$ is given by
\[
B_{i+1} = D_{i} - J(D_{i})
\]

\[
D_{i+1} = \bigcup_{\textrm{non-simple cycles}\ I_k} I_k
\]

\[
Z_{i+1} = \bigcup_{\textrm{simple~cycles}~I_k} (I_k - I_k^\infty)
\]

\[
C_{i+1} =   [J(D_{i})-I(D_{i})]~~\cup~~ \bigcup_{\textrm{simple~cycles}\ I_k}I_k^\infty,
\]
Let us pause to examine this output for the basis step $i=0$.  First of all, $B_1$ consists of all arcs in $D_0$ which have simplicial coordinates bounded below and lambda lengths bounded above for all $t$.  As a result, the simplicial coordinates of the arcs in $B_1$ are all comparable 
and likewise their lambda lengths.  We further observe that since any edges of $E''-E$ have $\bar X_t = 0$, they are deepest in the comparability filtration and hence not in $B_1$.

$D_1$ consists of the quasi recurrent $I_k$ that are not simple cycles which have divergent lambda lengths and hence vanishing simplicial coordinates; this then becomes input for the next stage in the process.  We emphasize that the edges of $G(D_1)$ have divergent lambda length compared to any adjacent edges not in $G(D_1)$.

$Z_1$ consists of the duals of edges in each simple cycle with divergent, asymptotically equal, lambda lengths but slowly vanishing simplicial coordinate; thus, the intersection of $Z_1$ with a component $I_k$ consists of edges with comparable lambda lengths and comparable simplicial coordinates.  Upon collapsing
duals of $\cup _{\rm simple~cycles} I_k^\infty\subseteq C_1$, each component $I_k$ which intersects with $Z_1$ again becomes a simple cycle composed of the edges $(I_k -I_k^\infty)$.  At the end of our procedure, we shall determine to which level of the filtered screen these cycles belong; again, we observe that edges of $E''-E$ are not in $Z_1$.  

Finally, $C_1$ consists of two subsets.  The first, $J(F_{0})-I(F_{0})$, is made up of  those edges whose simplicial coordinates go to zero, but whose lambda lengths remain bounded; the duals of these edges will be appropriately collapsed at the end of the algorithm.  The second subset arises from simple cycles with comparably divergent lambda lengths; these edges, which are again finally collapsed,   make up
linear chains whose simplicial coordinates go to zero faster than other simplicial coordinates in their cycle.  Thus, these edges have comparable lambda lengths but vanishing simplicial coordinates and will also be appropriately collapsed at the end of the algorithm.  To make sure that these collapses are legitimate, we have:

\smallskip

\noindent{\bf Claim}~
$G(C_1)$ is a forest in $G(E)$ so that each component tree meets $G(D_1)$ in at most a single endpoint
of an edge.

\smallskip

\noindent {\bf Proof.}~Indeed, each component of $G(J(D_0)-I(D_0))$ is a tree since $I(D_0)$ is the maximal quasi recurrent subset of $J(D_0)$ by the Filtered IJ Lemma.
Each such tree can have at most one point in common with the $G(I_k)$ since if there were two such points of intersection, then there would be a quasi efficient cycle in $G(J(D_0))$ that was not in $G(I(D_0))$, again contradicting the maximality of $I(D_0)$.  This completes the proof of the claim.  

\smallskip

Proceeding now to the general iteration, we are recursively provided with a quasi recurrent $D_i$ so that $G(D_i)$ has no simple cycle components
and whose edges have divergent lambda lengths compared to any adjacent edges of $G(E)$ not in $G(D_i)$.  Again we may rescale all lambda lengths so that the edges of $D_i$ have scaled lambda lengths bigger than one, but so that edges incident on but not in $G(D_i)$ will have rescaled lambda lengths tending to zero.  The hypotheses of the Filtered IJ Lemma are therefore satisfied, and we apply that lemma to $D_i$ to obtain $I(D_i)$ and $J(D_i)$.  As before, we may separate $I(D_i)$ into its components $I_1,\cdots,I_m$ and define a partition of $D_i$ according to the recursion.

The analysis of the resulting sets $B_{i+1}$, $Z_{i+1}$, $D_{i+1}$, and $C_{i+1}$ is entirely analogous to the previous discussion for the basis step; again we observe that edges of $E''-E$ are not in $B_{i+1}$ or $Z_{i+1}$, and we furthermore have:

\smallskip

\noindent{\bf Claim}~$G(\cup_{j=1}^{i+1} C_j)$ is a forest in $G(E)$ so that each component tree meets $G(D_{i+1})$ in at most a single endpoint of an edge.
\smallskip

\noindent {\bf Proof.}~To see this, we may assume by induction the previous claim that $G(\cup_{j=1}^{i} C_j)$  is a forest, and the only way components of $G(\cup_{j=1}^{i} C_j)$ can meet $G(D_i)$ is at a single endpoint of an edge. 
It follows that indeed  $G(\cup_{j=1}^{i+1} C_j)$ is a forest, since $G(C_{i+1})$ is a forest in $G(D_i)$ by the Filtered IJ Lemma.
Moreover, the component trees of $G(C_{i+1})$ can only possibly meet  the new $G(I_k)$ at a single vertex, for otherwise, $I(D_i)$ could not be the maximal quasi recurrent subset of $J(D_i)$.  As a result, if a component tree of $G(\cup_{j=1}^{i+1} C_j)$ were to meet the new $G(I_k)$ at two of its endpoints, only one of these could occur at a point where $G(C_{i+1})$ intersects the $G(I_k)$; the other must occur where $G(\cup_{j=1}^{i} C_j)$ intersects the $G(I_k) \subset G(D_i)$.  This latter point plus the point where $G(\cup_{j=1}^{i} C_j)$ intersects $G(C_{i+1}) \subset G(D_i)$ would then represent two distinct points which $G(\cup_{j=1}^{i} C_j)$ has in common with $G(D_i)$, which 
is impossible by the inductive hypothesis.  This completes the proof of the claim.

\smallskip

Let us assume that the recursion terminates with $D_{n+1}=\emptyset$ having constructed
the various subsets $C_i, Z_i, B_i$ of $E$, for $i=1,\ldots ,n$. We must next
build the resulting filtered screen $\vec{\cE}=\vec{\cE}(\bar X_t)$ and identify the associated point $p=p(\bar X_t) \in \cC(\vec{\cE})$.

To this end, the collection $E'=E''-\cup_{j=1}^{n} C_j \subseteq E$ is a q.c.d.\ according to the previous claim on which we shall define the filtered screen.
The various sets $Z_i,B_i$, for $i=1,\ldots ,n$ are pairwise disjoint, and the arcs in any connected component of $Z_i$ or $B_i$ have comparable simplicial coordinates by construction.
We may thus group together the arcs in $\bigcup_{i=1}^n Z_i\cup \bigcup_{i=1}^n B_i$
into common comparability classes taking the union of these sets in a given class to define
the edges at a fixed level $L_j$ in our filtered screen $\vec{\cE}$ on $E'$.  Quasi recurrence is assured from the construction, since each  $\bigcup_{j=i}^k L_j$ is in fact equal to one of the quasi recurrent sets $D_i$ minus a contractible set of edges, plus possibly additional quasi recurrent simple cycles;
finally, the point $p\in \cC(\vec{\cE}(\bar{X}_t))$ is determined by limiting ratios of projective classes of simplicial coordinates in each level.

It follows by construction that $p$ and $\vec{\cE}$ have the characterizing properties in the statement of the proposition
since the recursion itself depends only on comparability classes of arcs in each maximal quasi recurrent set. In other words, at each step of the recursion it is the non-empty $L_j = R_j \cap (E_j - E_{j+1})$ which form levels in the resulting filtered screen.  In particular, since $E''-E$ is a forest for any quasi triangulation $E''$ containing $E$,  for any subset $A \subset E$, there is a one-to-one correspondence between quasi efficient cycles in $A \cup (E''-E)$ and $A \cup ({E_1}''-E)$ for any two quasi triangulations $E''$ and ${E_1}''$ containing $E$.  Moreover, since the maximal recurrent subset of a set is simply the union of all quasi efficient cycles in the dual fatgraphs of that set, taking the maximal recurrent subset for each of $A \cup (E''-E)$ and $A \cup ({E_1}''-E)$ and then collapsing edges in $E''-E$ and ${E_1}''-E$ yields the maximal recurrent subset of $A$ in either case.  Thus, our construction does not depend on the choice of quasi triangulation.
\end{proof}

\begin{remark}
Observe that the above proposition provides a simple algorithm for producing a filtered screen from a stable path of simplicial coordinates, namely collapse all edges that are not in the maximal quasi recurrent subset for their respective subset $E_j$ in the comparability filtration, and then set as levels of the filtered screen the remaining edges in each $E_j-E_{j+1}$.  However, the detail of the preceding proof allows us to establish our first main theorem which justifies using filtered screens to identify short curves in the limit of such stable paths.
\end{remark}

\begin{theorem}
\label{thm:FSversionMcShanePenner}
The cell $\cC(G)$ in $P\widetilde T_{\cP}(F)$ corresponding to the fatgraph $G$ contains a stable path of simplicial coordinates whose projection to $T(F)$ is asymptotic to a stable curve with pinch curves $\sigma$ if and only if $\sigma$ is homotopic to the collection of edge-paths $\partial \vec{\cE}$ for some filtered screen $\vec{\cE}$ on $G$.
\end{theorem}

\begin{proof}

In the notation of the preceding proof, there is a
screen 
$$\cA(\bar{\lambda}_t) = \{E,I(D_0),I(D_1),\dots,I(D_n)\}$$
which is naturally associated to a stable path $\bar{\lambda}_t$ corresponding
to $\bar X_t$.  We have  $\partial \vec{\cE}(\bar{X}_t) = \partial \cA(\bar{\lambda}_t)$, and by Lemma 4.38 in \cite{[MP]}, these are precisely the curves whose hyperbolic lengths tends to zero.
\end{proof}

We now show in particular that each point of $\cF\cS_{\cP}(F)$ arises as the limiting point of some stable path in a cell of $P\widetilde T_{\cP}(F)$:

\begin{proposition}
\label{prop:everypointinscreenhasstablepath}
Given q.c.d.'s $E'\subseteq E$ and a point $p \in \cC(\vec{\cE'})$ for some filtered screen $\vec{\cE'}$ on $E'$, there is a stable path $\bar{X}_t$ in $\cC(E)$ such that $p=p(\bar{X}_t)$ and $\vec{\cE}(\bar{X}_t) = \vec{\cE'}$.   
\end{proposition}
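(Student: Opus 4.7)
The plan is constructive: I would build an explicit stable path in $\cC(E)$ by prescribing lambda lengths on a completion of $E$ to a quasi triangulation, with distinct power-law divergence rates on each level of the filtered screen, and then verify that the algorithm from Proposition \ref{prop:stablepathyieldscreen} applied to this path recovers exactly $\vec{\cE'}$ and $p$.

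Complete $E$ to a quasi triangulation $E''$, fix base lambda lengths $\mu \colon E'' \to \mathbb{R}_{>0}$ representing some point of $\cC(E) \subseteq \bar\cC(E'')$, and pick strictly increasing positive exponents $0 < a_0 < a_1 < \cdots < a_n$. Define
\[
\lambda_t(e) = \begin{cases} \mu(e)\, t^{a_k} & \text{if } e \in L^k,\\ \mu(e) & \text{if } e \in (E-E') \cup (E''-E),\end{cases}
\]
where the latter rates $a = 0$ on $(E-E')\cup(E''-E)$ may be adjusted upward to some $a_k$ (selecting the triangulation $E''$ and the assignment of ``interior'' arcs accordingly) so that in every complementary triangle or once-punctured monogon of $E''$, the maximum growth rate is attained by at least two of its frontier edges; this is exactly the combinatorial condition that the generalized triangle inequalities of Remark \ref{rem:triangle} remain asymptotically satisfied, and the flexibility to make such a choice is provided precisely by the quasi recurrence of each $L^{\geq k}$, which via Lemma \ref{lem:rec} guarantees quasi efficient cycles in $G(L^{\geq k})$ through every edge.

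Given this construction, stability of $\bar\lambda_t$ in $P\mathbb{R}_{>0}^{E''}$ is immediate, since the projective limit is dominated by the $L^n$-edges. Running the algorithm of Proposition \ref{prop:stablepathyieldscreen}: at stage $i$, after rescaling by $1/\inf_{e\in D_i}\lambda_t(e) = t^{-a_i}$, the Filtered IJ Lemma returns $I(D_i) = L^{\geq i+1}$ inside $D_i = L^{\geq i}$; the non-simple-cycle components feed into $D_{i+1}$, the simple cycles enter $Z_{i+1}$, and the remaining level-$i$ edges enter $B_{i+1}$, while $C_{i+1}$ absorbs the edges of $E - E'$ and $E''-E$ whose rescaled lambda lengths fail to diverge at that stage. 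Gluing the $Z_j \cup B_j$ by comparability class reproduces $(L^0, \ldots, L^n) = \vec{\cE'}$. For the limit point, tune $\mu$ inductively from the deepest level outward: at each step, the asymptotic projective class of the simplicial coordinate $X_t(e)$ on $e \in L^k$ depends, modulo vanishing lower-order terms, only on $\mu|_{L^{\geq k}}$ near $e$, so Theorem \ref{thm:lambdacoords} applied stratum-by-stratum lets one realize the prescribed $p_k$.

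The principal obstacle is the combinatorial step of arranging the completion $E''$ and the rate assignment so that the triangle inequalities persist throughout the path; this must be done so that no triangle of $E''$ has a unique edge of maximal rate and so that edges of $E - E'$ end up in the collapse sets $C_j$ rather than in $B_j$ or $Z_j$. Quasi recurrence of all $L^{\geq k}$ is the key hypothesis making this possible, but establishing it rigorously requires careful case analysis of how edges of $E-E'$ and $E''-E$ interact with the recurrent subfatgraphs $G(L^{\geq k})$. A secondary subtlety is the inductive solution for $\mu$ to match $p$, which relies on an asymptotic decoupling: the simplicial coordinate on level $k$ is to leading order independent of levels $<k$, permitting the innermost-first inductive adjustment to succeed.
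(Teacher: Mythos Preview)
Your approach is workable in spirit but takes a substantially harder route than the paper, and one of the obstacles you flag is in fact a genuine gap as stated. The paper's proof is a three-line construction working directly in \emph{simplicial coordinates} rather than lambda lengths: complete $E$ to a quasi triangulation $E''$, deprojectivize the point $p$ to get coefficients $x_k(e)$ on each $L^k$, and set
\[
X_t(e)=\begin{cases}0,& e\in E''-E,\\ x_k(e)\,t^{-k},& e\in L^k,\\ t^{-(n+1)},& e\in E-E'.\end{cases}
\]
Since projectivized simplicial coordinates are barycentric coordinates on $\cC(E)$ by Theorem~\ref{thm:celldecompdecTeich}, any assignment of positive values on $E$ (and zero on $E''-E$) lies in $\cC(E)$ automatically; the comparability filtration is then read off from the exponents, and the characterization in Proposition~\ref{prop:stablepathyieldscreen} immediately gives $\vec{\cE}(\bar X_t)=\vec{\cE'}$ with limiting point $p$. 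Both of the obstacles you identify simply disappear: there are no triangle inequalities to arrange, and there is no inversion problem for $\mu$ because the target projective classes $p_k$ are the coefficients themselves.

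By contrast, your lambda-length construction must keep the path inside $\cC(E)$, and the condition you impose (``in every triangle the maximum growth rate is attained by at least two frontier edges'') is \emph{necessary but not sufficient} for this. It guarantees the generalized triangle inequalities of Remark~\ref{rem:triangle} asymptotically, but not non-negativity of simplicial coordinates: if in a triangle the two maximal-rate edges are $a$ and $e$ (say with common rate strictly exceeding that of $b$), then $a^2+b^2-e^2\sim(\mu(a)^2-\mu(e)^2)\,t^{2a_k}$, whose sign depends on $\mu$. So $X_t(e)$ can become negative for large $t$ and the path leaves $\cC(E)$. Fixing this would require further constraints on $\mu$, which then conflict with your separate inductive tuning of $\mu$ to realize the prescribed $p$; you would have to solve both problems simultaneously, and the proposal does not indicate how. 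The paper's choice of coordinate system makes all of this unnecessary.
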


\begin{proof}
Suppose that $\vec{\cE}'=(L^0,\ldots ,L^n)$ is of total level $n$ and let $x_k(e)$ denote deprojectivized
 simplicial coordinates on $e\in L^k$ in the $k$th factor simplex $P\mathbb{R}_{>0}^{L^k}$ of $\cC(\vec{\cE}')$,
 for $k=0,\ldots, n$.
 Complete $E$ to an ideal triangulation $E''$ and define for $t\geq 1$
$$X_t(e)=\begin{cases}
0,& {\rm if}~e\in E''-E;\\
x_k(e)~t^{-k},& {\rm if}~ e_i\in L^k, ~{\rm for}~ 0\leq k\leq n;\\
t^{-(n+1)},& {\rm if} ~e\in E-E'.\\
\end{cases}$$
The projectivization $\bar X_t$ is a stable path in ${\cC}(E)$ that by construction has the specified limiting point and filtered screen.
\end{proof}

To conclude the discussion, we explain the face relations in Definition \ref{defn:spaceoffilteredscreens} in terms of comparability filtrations of stable paths.  Let $\vec{\cE}=(L^0,\cdots,L^n)$ be a fixed filtered screen based on a q.c.d.\ $L^{\geq 0}$, and complete $L^{\geq 0}$ to a quasi triangulation $E$ by adding a set $B$ of ideal arcs.  We first observe as a consequence of Proposition \ref{prop:stablepathyieldscreen} that stable paths $\bar X_t$ in $\cC(L^{\geq0}) \subset \bar\cC(E)$ having comparability filtration $$E=(L^{\geq 0}\cup B) \supset (L^{\geq 1}\cup B) \supset \cdots \supset (L^n\cup B) \supset B$$ are precisely those with $\vec{\cE}(\bar X_t) = \vec{\cE}$.  In the following two corollaries of Proposition \ref{prop:stablepathyieldscreen}, we consider limiting filtered screens of stable paths in $\cC(E)$ which slightly increase the comparability class for a proper subset of edges in a level $L^k$.

\begin{corollary}
Let $\vec{\cE}=(L^0,\cdots,L^n)$ be a filtered screen based on a q.c.d.\ $L^{\geq 0}$, and complete $L^{\geq 0}$ to a quasi triangulation $E$ by adding a set $B$ of ideal arcs.  Let $e \in L^k$ be an arc whose dual edge in $G(E)$ has distinct endpoints, at most one of which is punctured.  Consider any stable path $\bar X'_t$ in $\cC(E)$ having comparability filtration 

\begin{eqnarray*}
(L^{\geq 0} \cup B)\supset \cdots &\supset& (L^{\geq k}\cup B) \supset (L^{\geq k+1} \cup \{e\}\cup B) \supset (L^{\geq k+1}\cup B)\\ &\supset& \cdots \supset (L^n\cup B) \supset B.
\end{eqnarray*}

\noindent Then we have the following: 

\begin{itemize}
	\item[1.]   if the dual edge of $e$ in $G(L^{\geq 0})$ has a non-puncture endpoint disjoint from $G(L^{\geq k+1})$, then 
	$$\vec{\cE}(\bar X'_t) = (L^0,\cdots,L^{k-1},L^k-\{e\},L^{k+1},\cdots,L^n)$$ 
	is based on the q.c.d.\ $L^{\geq 0}-\{e\}$;
	\item[2.] if the dual edge of $e$ has both endpoints in $G(L^{\geq k+1})$ and $L^k - \{e\}$ is nonempty, then 
	$$\vec{\cE}(\bar X'_t) = (L^0,\cdots,L^{k-1},L^k-\{e\},\{e\},L^{k+1},\cdots,L^n).$$
\end{itemize}  
\end{corollary}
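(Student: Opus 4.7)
The plan is to apply the characterization in Proposition \ref{prop:stablepathyieldscreen}: the filtered screen associated to a stable path is determined by its comparability filtration $E_0\supset E_1\supset\cdots$ together with the maximal quasi recurrent subset $R_j$ of each $E_j$ and the intersections $R_j\cap(E_j-E_{j+1})$. I compute these for $\bar X'_t$ and compare them, via that characterization, with the corresponding data of a reference stable path obtained by applying Proposition \ref{prop:everypointinscreenhasstablepath} to the claimed new filtered screen; matching data then forces matching filtered screens. For each $j\neq k+1$, the set $E_j$ has the form $L^{\geq j'}\cup B$ for an appropriate index $j'$, and $L^{\geq j'}$ is quasi recurrent as a nonzero level of the filtered screen $\vec{\cE}$, so $L^{\geq j'}\subseteq R_j$; any extra contribution from $B$ arises identically for the reference path and cancels in the comparison.

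The critical computation is therefore $R_{k+1}$ for $E_{k+1}=L^{\geq k+1}\cup\{e\}\cup B$. In Case 1, the hypothesis places a non-puncture endpoint of the dual of $e$ in $G(L^{\geq 0})$ off of $G(L^{\geq k+1})$, so that endpoint is a non-puncture univalent vertex of $G(L^{\geq k+1}\cup\{e\})$, and by Lemma \ref{lem:rec} the arc $e$ lies in no quasi efficient cycle, giving $R_{k+1}\cap\{e\}=\emptyset$. In Case 2, both endpoints of the dual of $e$ lie in $G(L^{\geq k+1})$, and I invoke quasi recurrence of $L^{\geq k+1}$ to build a quasi efficient cycle through $e$, either by concatenating $e$ with a path in $G(L^{\geq k+1})$ joining those endpoints or, if they coincide, the length-one cycle at that vertex; hence $R_{k+1}\cap\{e\}=\{e\}$. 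Assembling the intersections $R_j\cap(E_j-E_{j+1})$ and suppressing empty entries then produces the claimed filtered screens: in Case 1 the empty entry at position $k+1$ collapses so that the screen is $(L^0,\dots,L^{k-1},L^k-\{e\},L^{k+1},\dots,L^n)$ based on $L^{\geq 0}-\{e\}$, while in Case 2 the entry $\{e\}$ inserts a new level yielding $(L^0,\dots,L^{k-1},L^k-\{e\},\{e\},L^{k+1},\dots,L^n)$ based on $L^{\geq 0}$.

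The main obstacle is bookkeeping for the ancillary set $B$, whose dual subgraph $G(B)\subseteq G(E)$ can itself support quasi efficient cycles (for instance around an interior puncture when a complementary region of $L^{\geq 0}$ is once-punctured). Since both the given comparability filtration and the reference filtration place the edges of $B$ in the same deepest class and in the same position at every step, any such internal contribution to $R_j$ appears identically on both sides, and the equality $R_j\cap(E_j-E_{j+1})=R_j\cap(E_j'-E_{j+1}')$ in Proposition \ref{prop:stablepathyieldscreen} eliminates it from the final comparison, leaving precisely the listed levels of the filtered screen.
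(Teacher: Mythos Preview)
Your approach has a genuine gap in Case~1. The reference path that Proposition~\ref{prop:everypointinscreenhasstablepath} produces for the screen $(L^0,\dots,L^{k-1},L^k-\{e\},L^{k+1},\dots,L^n)$ on $L^{\geq 0}-\{e\}$ assigns to $e$ the rate $t^{-(n+1)}$, so $e$ lands in the deepest nonzero comparability class, just above $B$: its filtration reads
\[
\cdots\supset L^{\geq k}\cup B\supset L^{\geq k+1}\cup\{e\}\cup B\supset L^{\geq k+2}\cup\{e\}\cup B\supset\cdots\supset\{e\}\cup B\supset B.
\]
This does \emph{not} line up with the given filtration from level $k+2$ onward, and the level-by-level test of Proposition~\ref{prop:stablepathyieldscreen} already fails at $j=k+1$: both filtrations have $E_{k+1}=E'_{k+1}=L^{\geq k+1}\cup\{e\}\cup B$ and hence a common $R_{k+1}$, but $E_{k+1}-E_{k+2}=\{e\}$ while $E'_{k+1}-E'_{k+2}=L^{k+1}$, so (using your own computation that $e\notin R_{k+1}$) one obtains $R_{k+1}\cap\{e\}=\emptyset$ versus $R_{k+1}\cap L^{k+1}=L^{k+1}\neq\emptyset$. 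At $j=k+2$ the maximal quasi recurrent subsets themselves differ, since $L^{k+1}\subseteq R_{k+2}$ but $L^{k+1}\not\subseteq E'_{k+2}$. The cancellation of $B$ that you invoke never gets to operate, because the two towers are shifted against each other in the middle, not merely padded at the end. (In Case~2 the reference filtration does align with the given one and your argument goes through.)

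The paper avoids this alignment problem by running the recursion from the proof of Proposition~\ref{prop:stablepathyieldscreen} directly on $\bar X'_t$: at the $k$th rescaling one has $J=\{e\}\cup L^{\geq k+1}\cup B$, and your endpoint analysis of $e$ is exactly what determines whether $e$ lies in the maximal quasi recurrent $I$. Applied inside the algorithm this immediately decides whether $e$ is absorbed into the collapsed forest $C_{k+1}$ (Case~1) or survives to produce its own level $B_{k+2}=\{e\}$ at the next step (Case~2). Your key observation is the right one; what is missing is a valid route from it to $\vec{\cE}(\bar X'_t)$ that does not pass through a misaligned reference path.
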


\begin{proof}
First observe that if the edge dual to $e$ in $G(E)$ is contained in a simple cycle component $I$ of $L^{\geq k}$, then $e$ necessarily satisfies the hypothesis of 1, and we also must have $L^k -\{e\}$ nonempty.  Thus, when we rescale the deprojectivization $X'_t$ of $\bar X'_t$ in the $k$-th step of the recursion in Proposition \ref{prop:stablepathyieldscreen} so that edges in $L^{\geq k}-(\{e\} \cup L^{\geq k+1})$ have bounded simplicial coordinates, we shall have $I^\infty = \{e\}$.  It follows that $\{e\} \in C_{k+1}$ is removed from the underlying q.c.d., and the limiting filtered screen  is then evidently $\vec{\cE}(\bar X'_t) = (L^0,\cdots,L^{k-1},L^k-\{e\},L^{k+1},\cdots,L^n)$ as required.
As a result, we may assume for convenience that $L^{\geq k}$ has no simple cycle components.  

For 1, we rescale at the $k$-th step of the recursion as above and observe that $J(L^{\geq k}\cup B) = \{e\} \cup L^{\geq k+1}\cup B$ and $L^{\geq k+1} \subset I(L^{\geq k}\cup B)$.  However, $e \notin I(L^{\geq k}\cup B)$, for if there is a quasi efficient cycle in $\{e\} \cup L^{\geq k+1}\cup B$ passing through $e$, then  all non-puncture vertices of $e$ in $G(L^{\geq 0})$  would be in $L^{\geq k+1}$.  Thus, $\{e\} \in C_{k+1}$ is removed from the underlying q.c.d.\ and limiting filtered screen, which is again $\vec{\cE}(\bar X'_t) = (L^0,\cdots,L^{k-1},L^k-\{e\},L^{k+1},\cdots,L^n)$.  For this last statement, we remark that since $L^{\geq k}$ is itself quasi recurrent, there must in this case be another edge $e'$ in $L^k$ whose dual in $G(E)$ shares with the dual of $e$ the endpoint disjoint from $G(L^{\geq k+1})$; thus, indeed $L^k -\{e\}$ is nonempty.

For 2, we again rescale at the $k$-th step and observe that in this case $\{e\} \cup L^{\geq k+1} \subset I(L^{\geq k}\cup B)$ since now $\{e\} \cup L^{\geq k+1}$ is recurrent.  Moreover, since by hypothesis $L^k-\{e\}$ is nonempty, we have $B_{k+1} = L^k-\{e\}$.  Furthermore, in the $(k+1)$-st step of the recursion, the deprojectivization $X'_t$ of $\bar X'_t$ is rescaled so that $e$ has bounded simplicial coordinate whence $B_{k+2} = \{e\}$, and the limiting filtered screen is $\vec{\cE}(\bar X'_t) = (L^0,\cdots,L^{k-1},L^k-\{e\},\{e\},L^{k+1},\cdots,L^n)$.
\end{proof}

\begin{corollary}
Let $\vec{\cE}=(L^0,\cdots,L^n)$ be a filtered screen based on a q.c.d.\ $L^{\geq 0}$, and complete $L^{\geq 0}$ to a quasi triangulation $E$ by adding a set $B$ of ideal arcs.  Let $A \subsetneq L^k$ be a proper subset with $A \cup L^{\geq k+1}$ quasi recurrent.  If a stable path $\bar X'_t$ in $\cC(E)$ has comparability filtration 

\begin{eqnarray*}
(L^{\geq 0} \cup B)\supset \cdots &\supset& (L^{\geq k}\cup B) \supset (L^{\geq k+1} \cup A\cup B) \supset (L^{\geq k+1}\cup B)\\ &\supset& \cdots \supset (L^n\cup B) \supset B.
\end{eqnarray*}

\noindent then $$\vec{\cE}(\bar X'_t) = (L^0,\cdots,L^{k-1},L^k-A,A,L^{k+1},\cdots,L^n).$$  
\end{corollary}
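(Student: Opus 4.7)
The plan is to invoke the recursive algorithm of Proposition \ref{prop:stablepathyieldscreen} and trace how it processes the given comparability filtration step by step, showing that it produces exactly the stated filtered screen. As in the proof of the preceding corollary, we may assume for convenience that the relevant $L^{\geq j}$ have no simple cycle components (the general case differs only by bookkeeping of which simple cycles end up at which level via the $Z_i$, $C_i$ sets).

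First, observe that the given comparability filtration differs from the one producing $\vec{\cE}$ only by the interpolation of $L^{\geq k+1} \cup A \cup B$ between $L^{\geq k} \cup B$ and $L^{\geq k+1} \cup B$. Hence for the first $k$ stages $i = 0, 1, \dots, k-1$, the rescalings and outputs $B_{i+1}, Z_{i+1}, C_{i+1}, D_{i+1}$ coincide with those obtained by running the algorithm on a stable path whose limiting filtered screen is $\vec{\cE}$. In particular, stage $i$ contributes the level $L^i$ to the filtered screen of $\bar X'_t$, and after stage $k-1$ we enter stage $k$ with $D_k = L^{\geq k}$.

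At stage $k$, the rescaling by $1/(\inf_{e \in D_k} \lambda_t(e))$ followed by the second constant rescaling arranges that edges in $L^k - A$, which form the outermost comparability class of $D_k$ by hypothesis, have lambda lengths bounded above and below, while those in $A \cup L^{\geq k+1}$ diverge. Consequently $J(D_k) = A \cup L^{\geq k+1}$. Since $A \cup L^{\geq k+1}$ is quasi recurrent by hypothesis, the Filtered IJ Lemma gives $I(D_k) = J(D_k) = A \cup L^{\geq k+1}$, so $B_{k+1} = L^k - A$ and $D_{k+1} = A \cup L^{\geq k+1}$. At stage $k+1$, the same reasoning applied to $D_{k+1}$ shows that $A$ is the outermost comparability class (its simplicial coordinates are bounded away from zero after rescaling while those of $L^{\geq k+1}$ vanish), and $L^{\geq k+1}$ is quasi recurrent by the very definition of the original filtered screen $\vec{\cE}$, so the Filtered IJ Lemma gives $B_{k+2} = A$ and $D_{k+2} = L^{\geq k+1}$. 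From stage $k+2$ onward, the algorithm is identical to the one for $\vec{\cE}$ restricted below level $k$, producing levels $L^{k+1}, L^{k+2}, \dots, L^n$ in order.

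Concatenating the levels $L^0, \dots, L^{k-1}$ produced in the first $k$ stages, the levels $L^k - A$ and $A$ produced in stages $k$ and $k+1$, and the levels $L^{k+1}, \dots, L^n$ produced in the remaining stages gives the filtered screen $(L^0, \dots, L^{k-1}, L^k - A, A, L^{k+1}, \dots, L^n)$, based on the q.c.d.\ $L^{\geq 0} = E - B$ as required. The main point requiring care is the handling of simple cycle components of $A \cup L^{\geq k+1}$ that were not simple cycles of $L^{\geq k+1}$ alone; these can only involve edges of $A$, and one must verify that in this situation the splitting still sends all of $A$ into the single level inserted between $L^{k-1}$-like and $L^{k+1}$-like levels, which is a direct check using $I_k^\infty$ and the definition of $Z_{k+1}$ in the algorithm.
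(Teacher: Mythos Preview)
Your proposal is correct and follows the same approach as the paper, which simply says the proof is identical to case 2 of the preceding corollary with $\{e\}$ replaced by $A$; you have written out that argument in full detail by tracing the recursion of Proposition \ref{prop:stablepathyieldscreen}. One minor point: at stage $k$ the primary input is that edges of $L^k - A$ have \emph{simplicial coordinates} bounded away from zero after rescaling (this is what the comparability hypothesis gives directly), whence $J(D_k) = A \cup L^{\geq k+1}$, and only then does the Filtered IJ Lemma together with quasi recurrence yield $I(D_k) = J(D_k)$ and hence divergence of lambda lengths on $A \cup L^{\geq k+1}$; your phrasing momentarily reverses this order, but the correct logic appears in your next sentence.
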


\begin{proof}
The proof is identical to the reasoning for case 2 in the preceding  lemma, replacing $\{e\}$ with $A$.
\end{proof}

%**************************************************************
\section{Decorated augmented Teichm\"uller space}
\label{sec:DecaugTeich}

With Proposition \ref{prop:stablepathyieldscreen} and Theorem \ref{thm:FSversionMcShanePenner} in hand, in this section we describe the quotient of $\cF\cS(F)$ which will yield the decorated augmented Teichm\"uller space $\widehat T(F)$.  Closely following the treatments \cite{[Brock],[W]}, we first recall the augmented Teichm\"uller space $\bar T(F)$ of a surface $F=F_g^s$, introduced in \cite{[Bers]}, which provides a bordification of $T(F)$.

%-----------------------------------------------------------------
\subsection{Augmented Teichm\"{u}ller space}
\label{subsec:augteich}

A maximal curve family $\cP=\{\alpha_i\}_1^N$, where the interior of each complementary region is a planar surface of Euler characteristic -1,  is a {\it pants decomposition} of $F$ and contains $N=3g-3+s$ curves.
Given a point $\Gamma\in T(F)$, there are associated hyperbolic lengths $\ell_i>0$ of the $\Gamma$-geodesic representatives of
the $\alpha_i$; these are completed to {\it Fenchel-Nielsen coordinates} on $T(F)$ by adjoining for each $\alpha _i$ one real {\it twisting parameter}
$\theta_i$ which records the relative displacement of the hyperbolic structures across the geodesic curves, for $i=1,\ldots , N$.

If $\sigma\subseteq \cP$, then there  is a corresponding stratum $T_\sigma$  added to $T(F)$ in  $\bar T(F)$ as follows:  the range of the Fenchel-Nielsen
coordinates with respect to $\cP$ is extended to allow $\ell_i\geq 0$, for  $\alpha_i\in\sigma$, where the twisting parameter $\theta_i$ is undefined
for $\ell_i=0$.  Geometrically, one pinches each curve in $\sigma$ to produce a nodal surface $F_\sigma$, and the added stratum $T_\sigma$ is a copy of the products of the Teichm\"uller spaces of all the irreducible components of $F_\sigma$.  In particular, $T_\emptyset\approx T(F)$ is the {\it null stratum}.

\begin{definition}
The {\it augmented Teichm\"uller space} of $F$ is the space
$$\bar T(F)=T_\emptyset\cup \bigcup T_\sigma,$$
where the union is over all non-empty curve families $\sigma$ in $F$, topologized so that
a neighborhood of a point $\bar F$ in $T_\sigma$, for $\sigma$ a subset of a pants decomposition
$\cP$,  consists of those points whose hyperbolic lengths
are close to those of $\bar F$ for each curve in $\cP$ and whose twisting parameters
are close to those of $\bar F$ for each curve in $\cP-\sigma$.
\end{definition} 

A mapping class  $\varphi\in MC(F)$ acts on $\bar T(F)$ by push-forward of metric as usual on the null stratum and induces
an action on other strata $T_\sigma\to T_{\varphi (\sigma)}$ via permutation of curve families and push-forward of metric
on irreducible components.  In particular, the pure mapping class group of each irreducible component acts on its
corresponding factor Teichm\"uller space in each stratum.

\begin{definition}[Stratum graph]
Associated with each stratum $T_\sigma$ is its {\em stratum graph}  $\cG(\sigma)$ which has a vertex indicated by a $\bullet$ icon 
for each irreducible component of $F_\sigma$ and an edge for each component of $\sigma$ itself connecting the vertices
corresponding to the irreducible components (which may coincide) on its two sides;  for each puncture of the original surface
$F$, we add yet another edge to $\cG(\sigma)$ connecting a univalent vertex indicated by the special icon $\circ$ with the vertex corresponding to the 
irreducible component of $F_\sigma$ containing it.  See Figure \ref{fig:StratumGraphB}.
\end{definition}

\begin{figure}[htbp]
	\centering
		\includegraphics[width=0.60\textwidth]{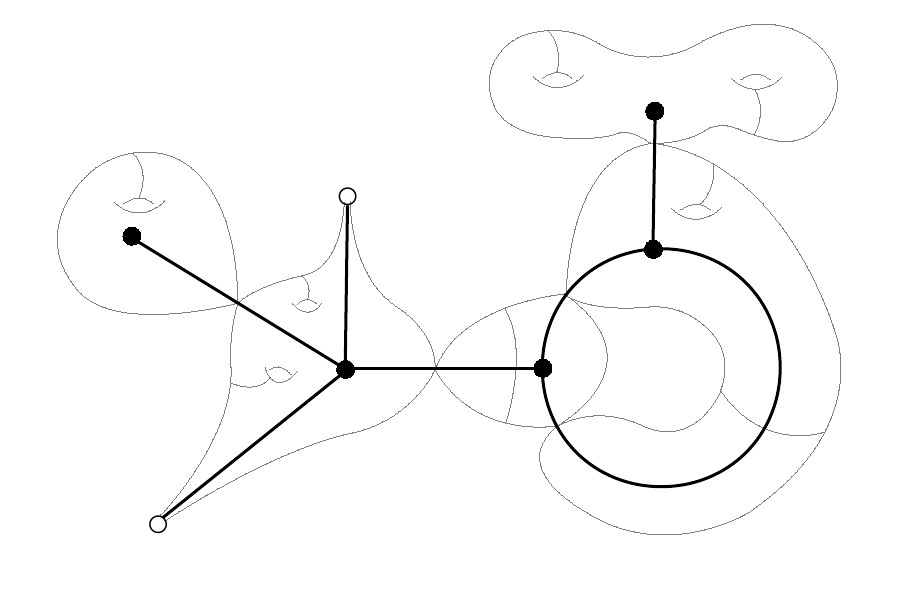}
	\caption{{\small The stratum graph $\cG(\sigma)$ associated with the nodal surface $F_\sigma$.}}
	\label{fig:StratumGraphB}
\end{figure}

\noindent We shall typically identify the edges of a stratum graph with $N \cup P$, where $N$ is the set of nodes and $P$ is the set of punctures of the nodal surface.  By construction, a stratum graph is always connected and contains at least one $\circ$-vertex.  It follows that there is a path
in the stratum graph from any element of $N\cup P$ to a $\circ$-vertex.

Given a stratum $T_\sigma \subset \bar T(F)$ and an $\alpha_i \in \sigma$ with $\ell_i = 0$, we can increase $\ell_i$, and in so doing, produce the stratum $T_{\sigma -\{\alpha_i\}}$ via the resolution of the node of $F_\sigma$ corresponding to $\alpha_i$, in which we replace the double point with $\alpha_i$.  On the level of stratum graphs, this resolution corresponds to collapsing an edge $x$ of $\cG(\sigma)$ to obtain $\cG(\sigma-\{\alpha_i\})$, with the proviso that if an edge has only one endpoint, we simply remove it from the graph.

\subsection{Punctured fatgraphs with partial pairing}
\label{subsec:partial}

Before performing the quotient we need one more object.

\begin{definition}[Punctured fatgraph with partial pairing] A {\it partial pairing} on a possibly disconnected punctured fatgraph is a collection of unordered pairs $\{\alpha,\beta\}$
where at least one of $\alpha$ or $\beta$ is a punctured vertex, and the other is 
either a distinct punctured vertex or a boundary component of a component fatgraph so that each punctured vertex or boundary component occurs at most once; see Figure \ref{fig:PairingsB}.  We shall refer to $\alpha$ and $\beta$ as {\em paired punctures} and denote the resulting {\em punctured fatgraph with partial pairing} by $\bar G$.\end{definition}

\begin{figure}[htbp]
	\centering
		\includegraphics[width=0.65\textwidth]{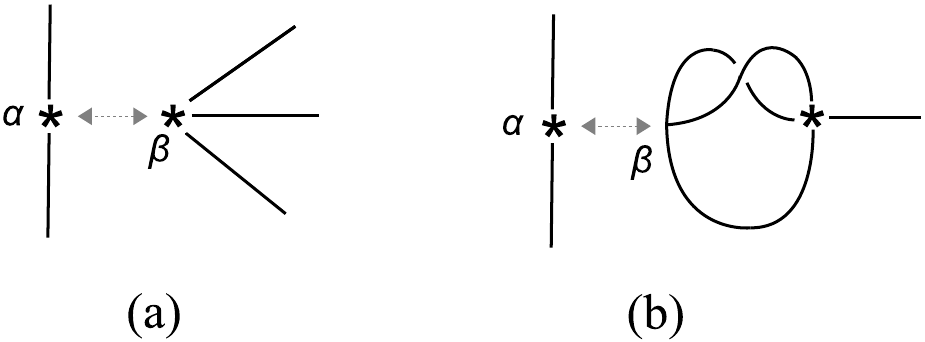}
	\caption{{\small The two types of paired punctures; in (a) two punctured vertices are paired, and in (b) a punctured vertex $\alpha$ is paired with a boundary component $\beta$.}}
	\label{fig:PairingsB}
\end{figure}

Fix a possibly disconnected surface $F$ with negative Euler characteristic $\chi(F)<0$ and $s>0$ punctures.  A {\em curve family} $\sigma$ in $F$ is (the isotopy class of) a collection of curves disjointly embedded in $F$, no component of which is puncture-parallel or null-homotopic, and no two of which are parallel.  Each such $\sigma$ determines a nodal surface $F_\sigma$ obtained by pinching each curve in $\sigma$ to a double point or node and then separating the resulting nodes into pairs of punctures to produce a surface whose components $F_1,\cdots,F_n$ are called the {\em irreducible components of} $F_\sigma$.   

\begin{definition}\label{def:support}
We say that a punctured fatgraph $\bar G$ with partial pairing  
and component fatgraphs $G_1,\ldots G_n$ is {\em supported by the connected surface $F$} if the following hold:
\begin{itemize}
	\item[i.] there are exactly $s$ unpaired punctures;
	\item[ii.] $\chi(F(G_i))<0$, for each $i = 1,\cdots,n$;		
	\item[iii.] $\chi(F) = \sum_{i=1}^n \chi(F(G_i))$;
	\item[iv.] any two component fatgraphs are connected by a sequence of partial pairings, i.e., for any two distinct punctured fatgraph components $G$ and $G'$, there is a sequence of components $G= G^{1},G^{2},\cdots,G^{m}=G'$ of $\bar G$ such that $G^{\ell}$ has a puncture or boundary component paired with a puncture or boundary component of $G^{{\ell+1}}$, for each $\ell$.  
\end{itemize}
\end{definition}

\noindent Figure \ref{fig:New1B} shows an example of a punctured fatgraph $\bar G$ with partial pairing supported by $F_2^4$, that is, a genus-2 surface with 4 punctures.

\begin{figure}[htbp]
	\centering
		\includegraphics[width=0.65\textwidth]{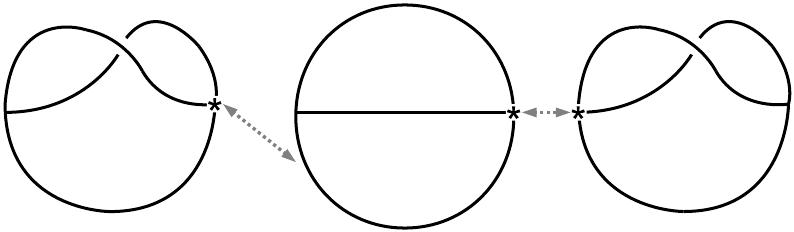}
	\caption{{\small An example of a punctured fatgraph $\bar G$ with partial pairing; the partial pairing on the left pairs a punctured vertex with a boundary component, and the partial pairing on the right pairs two punctured vertices.}}
	\label{fig:New1B}
\end{figure}

$\bar G$ thus determines the topological type of a nodal surface $F(\bar G)$, with the $F(G_i)$ as irreducible components and paired punctures corresponding to double points.  We may thus consider isotopy classes in $F$ of punctured fatgraphs $\bar G$ with partial pairing supported by $F$, by which we mean that $\bar G$ carries a specification of both a curve family $\sigma$ in $F$ such that $F(\bar G) = F_\sigma$, as well as an isotopy class of fatgraph $G_i$ in each irreducible component $F(G_i)=F_i$ of $F_\sigma$.
Moreover, we further consider {\em weighted} isotopy classes of punctured fatgraphs with partial pairing, which we still denote by $\bar G$, on which is specified a collection of positive weights, one weight for each edge, so that the sum of weights on edges within each component is one.  Two such labeled $\bar G$ and $\bar G'$ are therefore equivalent if and only if their respective curve families $\sigma$ and $\sigma'$  and component fatgraphs are isotopic, and the weights on their edges are identical.  Equivalently, one imagines projectivizing weights separately on each component fatgraph.

%------------------------------------------------------------------------------------
\subsection{The quotient space $\widehat T(F)$}
\label{subsec:equivalenceonfiltscreens}

There is an isotopy class in $F$ of weighted punctured fatgraph with partial pairing canonically associated to each 
point in $\cF\cS_{\cP}(F)$ as follows:

\begin{construction}  
\label{constructit}
Given a filtered screen
$\vec{\cE}=(L^0,\ldots ,L^n)$, consider its corresponding tower 
$$L^{\geq 0}\supset L^{\geq 1}\supset\cdots\supset L^{\geq n}$$
of quasi recurrent sets.  There is a corresponding tower of inclusions
$$G(L^{\geq 0})\supset G(L^{\geq 1})\supset\cdots\supset G(L^{\geq n}),$$
where each connected component of $G(L^{\geq k})$, for each $k=0,1\ldots ,n$, is either a simple cycle or it is not; if it is not, for the purposes of the construction we will call it a fatgraph, and we will further subdivide simple cycles into those that are puncture parallel (which we call horocycles) and those that are not (which we will just refer to as simple cycles).   These three possibilities for a component of 
$G(L^{\geq k+1})$ in $G(L^{\geq k})$ are then illustrated
in bold stroke in Figure \ref{fig:PartialPairingsB},
together with a modification of $G(L^{\geq k})$
to be performed in each case in order to produce a partial pairing
indicated by double arrows on the resulting punctured
fatgraph.  These modifications are performed in 
order of decreasing level beginning with $G(L^{\geq n}) \subset G(L^{\geq n-1})$ so as to produce an isotopy class of punctured fatgraph $\bar G(\vec{\cE})$
with partial pairing associated to $\vec{\cE}$.

\begin{figure}[htbp]
	\centering
		\includegraphics[width=0.85\textwidth]{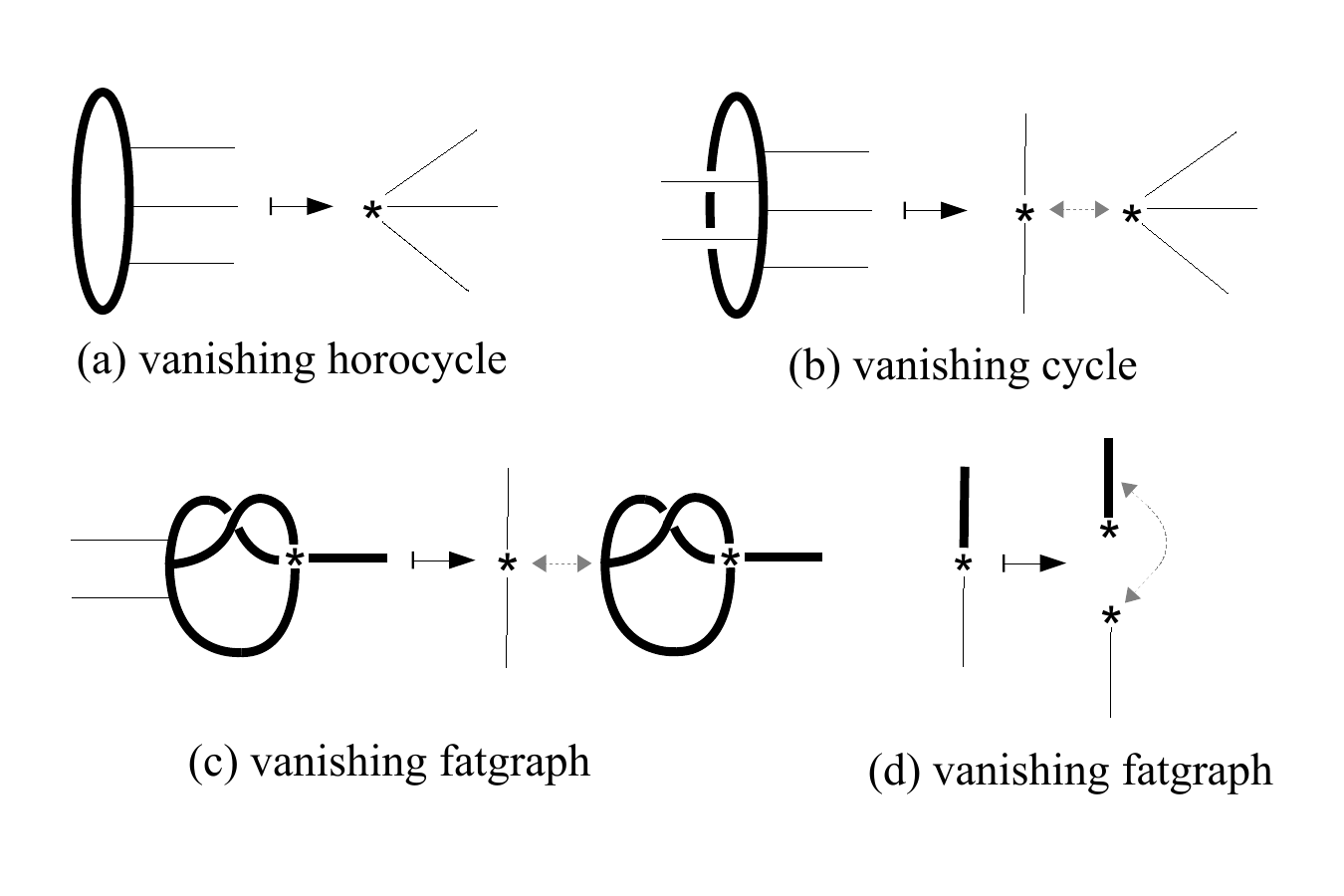}
	\caption{{\small The four operations performed on a filtered screen on a fatgraph to obtain the corresponding punctured fatgraph with partial pairing.}}
	\label{fig:PartialPairingsB}
\end{figure}

Furthermore, given a point $p\in\cC(\vec{\cE})$,  there are associated projectivized simplicial coordinates induced on
the edges of each component punctured fatgraph of $\bar G(\vec{\cE})$ as follows:
First, separately deprojectivize each factor simplex in 
$\cC(\vec{\cE})$ in order to have  coordinates defined on each edge.
\begin{itemize}
	\item For edges arising from operations (a), (b) or (d) in Figure \ref{fig:PartialPairingsB}, take the same coordinate in 
$\bar G(\vec{\cE})$.
	\item For edges arising from operation (c) in Figure \ref{fig:PartialPairingsB}, take as coordinate the sum of the  coordinates of the constituent edges that have combined to form the new edge.
\end{itemize} 
\noindent Finally, projectivize these coordinates on each punctured fatgraph component of $\bar G(\vec{\cE})$
to obtain projective simplicial coordinates on each component of $\bar G(\vec{\cE})$.
\end{construction}

\begin{figure}[htbp]
	\centering
		\includegraphics[width=0.80\textwidth]{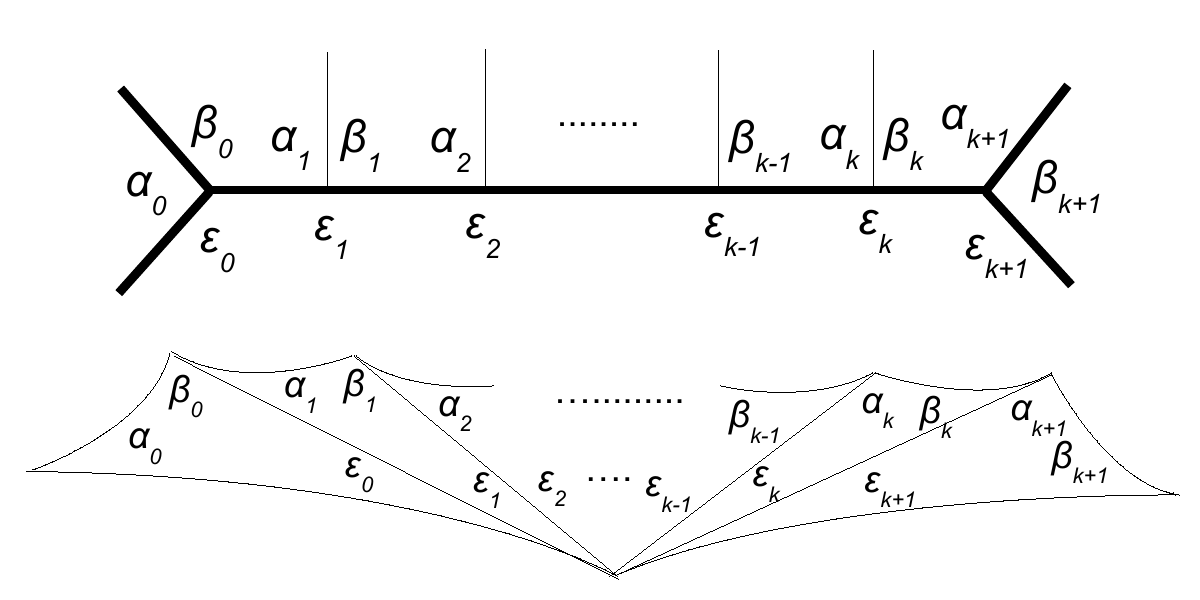}
	\caption{{\small Edges with bounded simplicial coordinates incident on a subfatgraph whose lambda lengths diverge; both the fatgraph version and the dual q.c.d. version are shown.}}
	\label{fig:SimpCoordsB}
\end{figure}

The procedure for operation (c)  is justified by the following calculation:
Figure \ref{fig:SimpCoordsB} illustrates the included h-lengths near a consecutive sequence of edges with bounded simplicial coordinates adjacent to a linear chain of  edges in bold strokes with vanishing simplicial coordinates.  Using the expression for simplicial coordinates as a linear combination of included h-lengths, one finds that the sum of simplicial coordinates along the linear chain is given by
\[
\sum_{i=1}^{k+1} X(e_i) = (\beta_0 + \epsilon_0 - \alpha_0) + 2\sum_{i=1}^k \epsilon_i + (\alpha_{k+1} + \epsilon_{k+1} - \beta_{k+1}).
\]
The term $2\sum_{i=1}^k \epsilon_i$ goes to zero, thus leaving the expression for the usual simplicial coordinate of the single edge  arising from the linear chain in keeping with the specification of simplicial coordinates above; a similar calculation holds if the linear chain terminates at a punctured vertex.

\begin{example}
\label{ex:partpair}
The top of Figure \ref{fig:PartialPairingExampleB} illustrates the fatgraph $G(E)$ treated in Example \ref{ex:filtscreen} on which we consider the two filtered screens
\[
\vec{\cE} = (E-[(a-g)\cup(i-k)],(a-g),(i-k)),
\]
\[
\vec{\cE}^{\prime}= (E-[(a-g)\cup(i-k)],(i-k),(a-g))
\]
in the earlier notation.  Assuming that $i-k$ is a horocycle, the common punctured fatgraph $\bar G(\vec{\cE})=\bar G(\vec{\cE}')$
with partial pairing is illustrated on the bottom in 
Figure \ref{fig:PartialPairingExampleB}, where the deprojectivized simplicial coordinates are indicated by capital Roman letters.

\begin{figure}[htbp]
	\centering
		\includegraphics[width=0.60\textwidth]{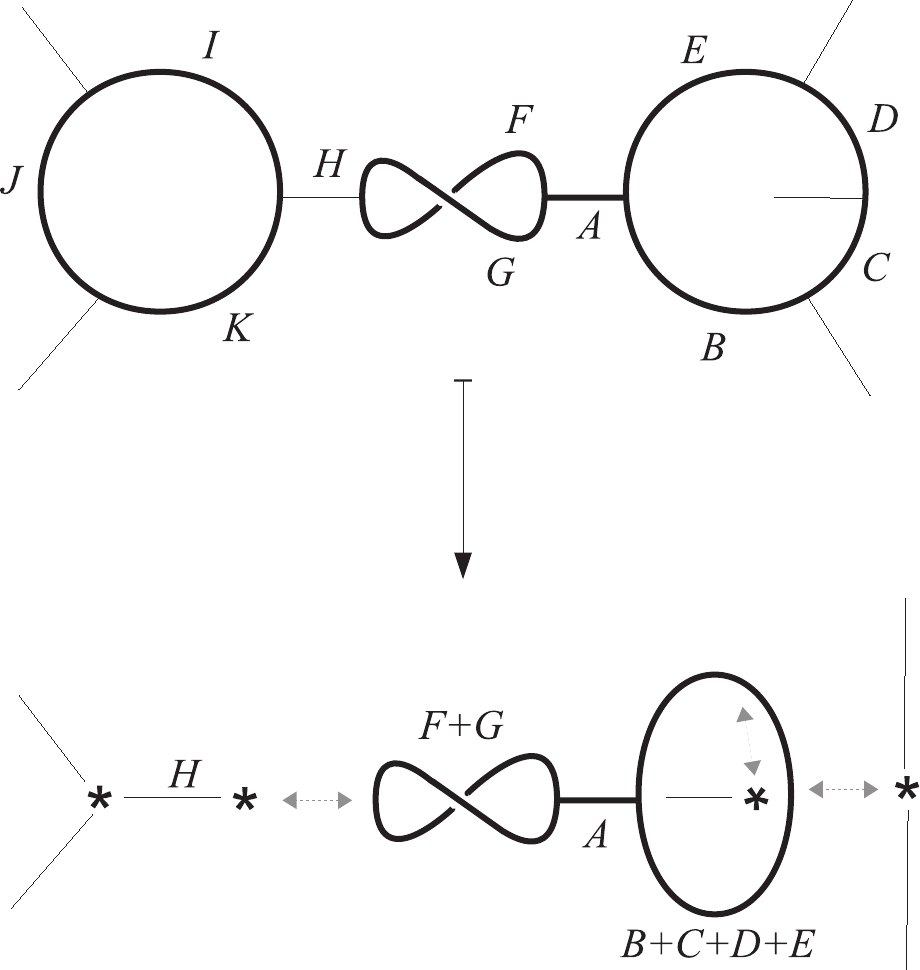}
	\caption{{\small Example of Construction \ref{constructit}.}}
	\label{fig:PartialPairingExampleB}
\end{figure}

\end{example}

\begin{definition} 
\label{defn:equivreln} Given $p\in\cC(\vec{\cE})\subset \cF\cS_{\cP}(F)$, 
the isotopy class of punctured fatgraph $\bar G(\vec{\cE})$ with partial pairing together with the
labeled assignment of projective simplicial coordinates on each component
punctured fatgraph determined by Construction \ref{constructit} is denoted $\pi (p)$.  Two points $p,q\in
\cF\cS_{\cP}(F)$ are {\it equivalent} and we write $p\sim q$ if $\pi (p)=\pi (q)$ with quotient space $\cF\cS_{\cP}(F)/\sim$.
\end{definition}

\begin{definition}[$\widehat T_{\cP}(F)$]
Define the {\em $\cP$-decorated augmented Teichm\"uller space} for $F$, denoted by $\widehat T_{\cP}(F)$, by the following:  a labeled isotopy class of punctured fatgraph with partial pairing $\bar G $ lies in $ \widehat T_{\cP}(F)$ if and only if $\bar G = \pi(p)$ for some point $p \in \cC(\vec{\cE}) \subset \cF\cS_{\cP}(F)$.  There is thus a bijection between $\widehat T_{\cP}(F)$ and $\cF\cS_{\cP}(F)/\sim$, and we topologize $\widehat T_{\cP}(F)$ with the quotient topology via this bijection.  
In the case of $\cF\cS(F)/\sim$ where $\cP$ is the set of all punctures of $F$, we simply denote this space by $\widehat T(F)$, and refer to it as the {\em decorated augmented Teich\"uller space} for $F$.   
\end{definition}

The example of $\widehat T(F_0^3)$ is illustrated in Figure \ref{fig:PGExampleB}. 
\begin{figure}[htbp]
	\centering
		\includegraphics[width=0.75\textwidth]{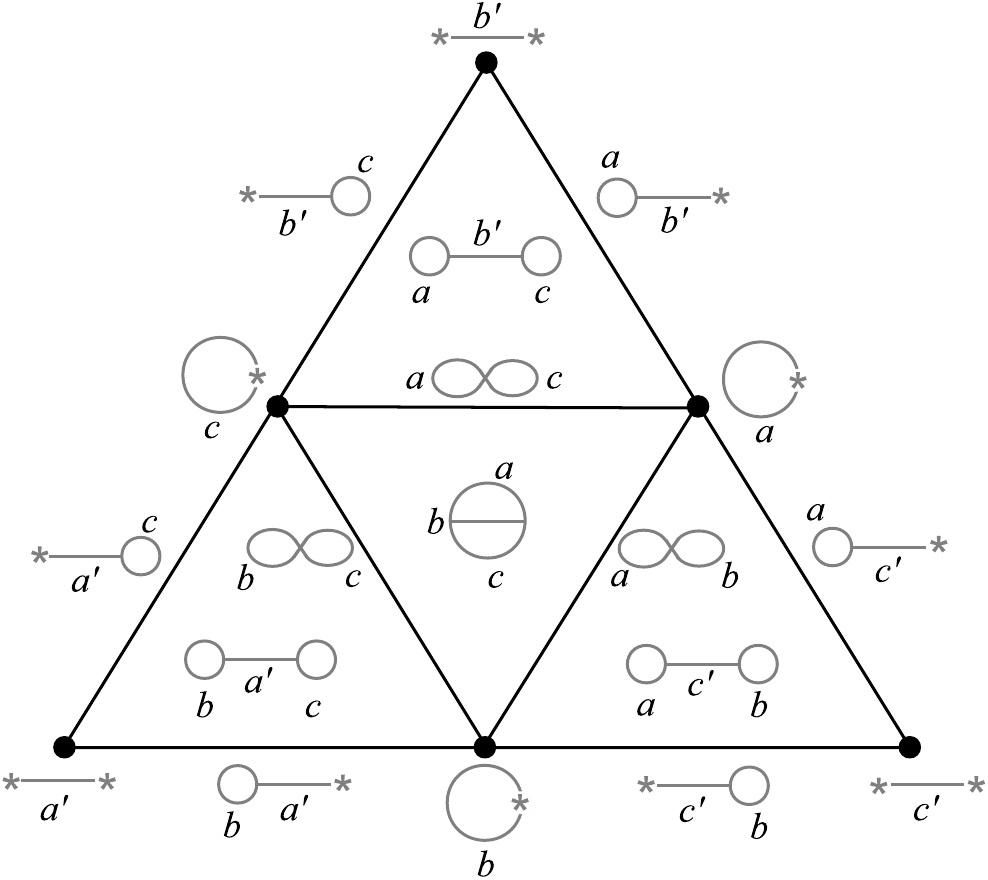}
	\caption{{\small $\widehat T(F_0^3)$ and its cell decomposition.}}
	\label{fig:PGExampleB}
\end{figure}

Our final goal in this paper is to provide a more usable description of a CW decomposition for $\widehat T(F)$, in particular describing how the strata of $\widehat T(F)$ are products of projectivized decorated Teichm\"uller spaces that are indexed by a new object termed {\it partially oriented stratum graphs}.

%------------------------------------------------------------------------------
\subsection{Partially oriented stratum graphs}
\label{subsec:nestsimbalanced}

Strata in $\bar T(F)$ are in 1-1 correspondence with stratum graphs $\cG(\sigma)$; we now begin to work toward showing that strata in $\widehat T(F)$ are in 1-1 correspondence with a new object, defined below, termed ``partially oriented stratum graphs''.

\begin{definition}[Partially oriented stratum graph]  A {\em partial orientation} on a stratum graph is an assignment of orientations to a subset of its edges such that every $\bullet$ vertex is an initial point for an oriented edge, no oriented edge has its initial point at a $\circ$ vertex, and there are no oriented cycles.
\end{definition}

We will typically denote a partially oriented stratum graph by the notation $\vec{\cG}$.

The following will be useful:  fix some stratum graph $\cG(\sigma)$ with edges $N \cup P$.  We shall consider ordered partitions $f$ of $N \cup P$, meaning $f$ is an ordered $m+1$-tuple $$f = (L_f^0,L_f^1,\cdots,L_f^m)$$ with $N \cup P = \bigcup_{i=1}^m L_f^i$, $L_f^i \neq \emptyset$ and $L_f^i \cap L_f^j = \emptyset$ for all $0 \leq i\neq j \leq m$.  

We use the notation $f$ since such an ordered partition exists if and only if it induces a surjection $$f:N\cup P \to \{0,1,\cdots,m\}$$ defined by $f(x) = k$ if $x \in L_f^k$; we will refer to $f(x)$ as the {\em level} of $x \in N \cup P$.

\begin{definition}[Nest]
\label{defn:nest}
An ordered partition $f$ is  a {\em nest} if for every $n \in N$ there is an adjacent $x \in N \cup P$ with $f(x) < f(n)$, where here $x$ is adjacent to $n$ if they share a common vertex endpoint.
\end{definition}

%\subsection{Partially oriented stratum graphs and compatible nests}

  We next observe that given $\vec{\cG}$, certain nests on its underlying stratum graph $\cG$ satisfy useful relationships with the orientations assigned to $\vec{\cG}$.

\begin{definition}[Compatible nest]
\label{defn:nestcompatible}
Given a partially oriented stratum graph $\vec{\cG}$, we say that a nest $f$ on the underlying stratum graph $\cG$ is {\em compatible with} $\vec{\cG}$ if for every vertex $v \in \cG$, the set of oriented edges with initial point at $v$ is precisely $\textrm{Min}_f(v)$. 
\end{definition}

We now observe that for each $\vec{\cG}$ such compatible nests exist.

\begin{definition}[The nest $f_{\vec{\cG}}$]
\label{defn:fbarg}
Given a partially oriented stratum graph $\vec{\cG}$, for each vertex $v$ we define $d(v)$ to be the maximal count of $\bullet$ vertices traversed by an oriented path starting from $v$ without counting $v$ itself; we shall call such a path realizing this maximal count a {\em maximal path} for $v$.  Define $f_{\vec{\cG}}:N \cup P \rightarrow \{0\} \cup \mathbb{N}$ as follows:  If $x \in N \cup P$ is an oriented edge, define $f_{\vec{\cG}}(x) = d(v)$ where $v$ is the initial vertex for the oriented $x$.  If $x \in N \cup P$ is unoriented, define $f_{\vec{\cG}}(x)$  to be one greater than $\max\{d(v) | v \in V(x)\}$.

\end{definition}

\begin{lemma}
\label{lemma:canonicalnest2}
The function $f_{\vec{\cG}}$ is a nest on $\vec{\cG}$.
\end{lemma}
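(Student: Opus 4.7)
The plan is to verify the four conditions of Definition~\ref{defn:nest} for $f_{\bar G}$. Two structural observations underlie everything. First, $\vec{\cG}_{\bar G}$ contains no oriented cycle, since the orientation rule converts each pairing of a decorated puncture with an undecorated puncture into a single arrow and condition 2 of Definition~\ref{defn:PGasset} explicitly forbids cyclic chains of such pairings; hence $d(v)$ is well-defined and finite at every non-$\ast$ vertex. Second, each component punctured fatgraph must possess at least one boundary cycle (its negative Euler characteristic forces at least one edge, and every nonempty fatgraph has at least one boundary cycle), so every non-$\ast$ vertex $v$ admits at least one outgoing oriented edge of $\vec{\cG}_{\bar G}$ --- to another non-$\ast$ vertex if the corresponding boundary cycle is paired, or to a $\ast$-vertex if it is unpaired.

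For conditions 1 and 2, I would invoke condition 1 of Definition~\ref{defn:PGasset} to produce a component $G_0$ whose boundary cycles are all unpaired; the vertex $v_0$ corresponding to $G_0$ then has no outgoing arrow to any non-$\ast$ vertex, forcing $d(v_0)=0$, and its unpaired boundary cycles supply edges $p \in P$ with $f_{\bar G}(p) = 0$, yielding condition 1. Condition 2 is immediate: if $n\in N$ is oriented $v\to v'$, then $n$ itself witnesses $d(v)\geq 1$, while if $n$ is unoriented then $f_{\bar G}(n) = 1 + \max\{d(\cdot)\} \geq 1$.

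The crux is the admissibility condition 3. Given $n\in N$ with $f_{\bar G}(n)=k$, I would produce an adjacent edge of smaller $f$-value by cases. If $n$ is oriented from $v$ to $v'$, then $d(v') \leq d(v)-1 = k-1$ by the basic DAG inequality, and the second observation applied at $v'$ supplies an outgoing arrow $x$ with $f_{\bar G}(x) = d(v') < k$. If $n$ is unoriented and $v$ is an endpoint realizing the maximum $d(v) = k-1$, the same observation applied at $v$ supplies an outgoing arrow $x$ with $f_{\bar G}(x) = d(v) = k-1 < k$.

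For condition 4, set $K = \max f_{\bar G}$ and choose a maximizing edge. A maximal oriented path from the vertex of $d$-value $K$ (or $K-1$ if the maximizer is unoriented) has its successive edges carrying $f$-values $K, K-1, \ldots, 1$, since $d$ decreases by exactly one along each step of such a path. Its terminal vertex has $d = 0$ and hence, by the same argument used for condition 1, admits an outgoing arrow to a $\ast$-vertex of $f$-value $0$, completing the surjection onto $\{0, 1, \ldots, K\}$. The main obstacle throughout is condition 3: it is the only step requiring a nontrivial structural argument combining the DAG inequality with the boundary-cycle existence fact, while conditions 1, 2, and 4 reduce to direct bookkeeping once the two observations are in place.
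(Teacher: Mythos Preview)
Your verification of the four nest conditions is correct in content and follows essentially the same case analysis as the paper's proof. There is, however, a logical ordering issue in how you justify your two preliminary structural observations.

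You establish the absence of oriented cycles and the existence of a component whose boundary cycles are all unpaired by invoking conditions 1 and 2 of Definition~\ref{defn:PGasset}, the intrinsic definition of the set $PG(F)$. But Lemma~\ref{lemma:canonicalnest2} concerns a point $\bar G\in\cP\cG(F)$, and at this stage of the paper $\cP\cG(F)$ is defined as the quotient $\cF\cS(F)/{\sim}$; the identification $\cP\cG(F)=PG(F)$ is only drawn \emph{after} Theorem~\ref{thm:maincelldecomp}, via Lemma~\ref{lemma:charorient}, whose converse direction explicitly appeals to ``reasoning identical to that in Lemma~\ref{lemma:canonicalnest2}''. So citing Definition~\ref{defn:PGasset} here is circular.

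The paper avoids this by working directly with a preimage $p\in\cC(\vec{\cE})\subset\cF\cS(F)$ satisfying $\pi(p)=\bar G$: the level-zero component of the filtered screen supplies a vertex $v$ with $d(v)=0$ (giving condition~1), and the observation just preceding Definition~\ref{defn:fbarg} that orientations point in the direction of \emph{decreasing} filtered-screen level already rules out oriented cycles. If you replace your appeals to Definition~\ref{defn:PGasset} with these filtered-screen arguments, your proof is complete and matches the paper's. Your treatment of conditions~2,~3, and~4, including the DAG inequality $d(v')\le d(v)-1$ and the exact decrement of $d$ along a maximal path, is fine and coincides with the paper's reasoning.
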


\begin{proof}
To see that the nest condition of Definition \ref{defn:nest} is satisfied, first observe that for an unoriented $n \in N$, there are by definition adjacent $x$ for which $f_{\vec{\cG}}(x) < f_{\vec{\cG}}(n)$.  If $n$ is oriented, assume it is in $\textrm{Min}_{f_{\vec{\cG}}}(v)$ for some $v \in V(n)$ and let $v_0$ denote the other vertex of $n$.  Consider directed paths beginning at $v_0$ and observe that a maximal path beginning at $v_0$ must have length smaller than the maximal path beginning at $v$. 
Finally, to confirm that $f_{\vec{\cG}}$ is a surjection onto $\{0,1,\cdots,m\}$ for some $m$, consider an oriented edge of level $k$.  Its initial point is a vertex $v$ with $d(v)=k$, and the maximal path for $v$ has length $k$.  Along this path, the maximal path length for the next vertex after $v$ must be $k-1$.  Thus, there must be an edge of level $k-1$ in $f_{\vec{\cG}}$.  A similar statement holds by definition for unoriented edges. 
\end{proof}

Figure \ref{fig:PartOrientStratumGraph} shows an example of a partially oriented stratum graph $\vec{\cG}$ along with the compatible nest $f_{\vec{\cG}}$.

\begin{figure}[htbp]
	\centering
		\includegraphics[width=0.65\textwidth]{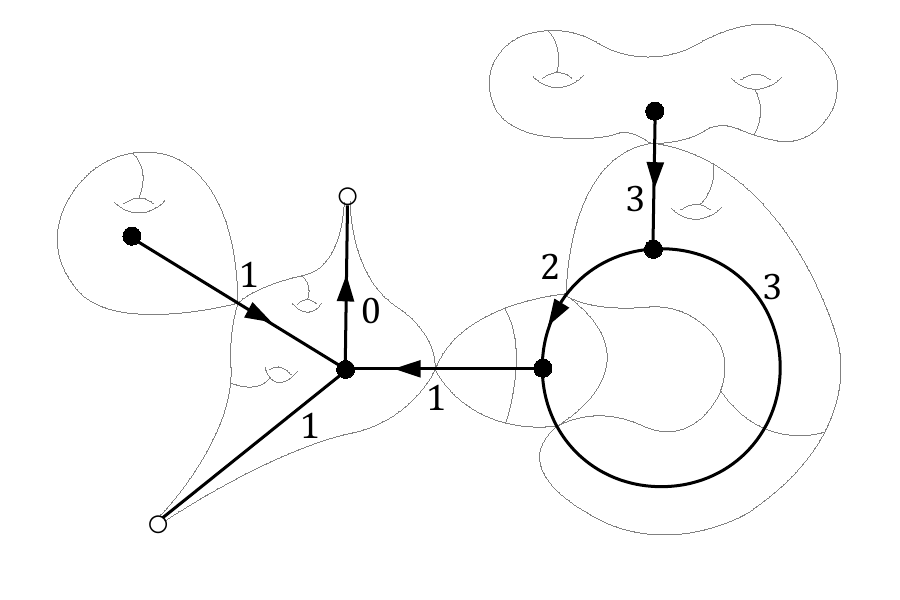}
	\caption{{\small An example of a partially oriented stratum graph $\vec{\cG}$ along with the compatible nest $f_{\vec{\cG}}$.}}
	\label{fig:PartOrientStratumGraph}
\end{figure}

\begin{definition}[The partially oriented stratum graph $\vec{\cG}_{\bar G}$]
\label{defn:thepartoriented}
To every point $\bar G \in \widehat T(F)$ we associate a unique stratum graph $\cG_{\bar G}$, which has a $\bullet$ vertex for each component punctured fatgraph and an edge for each pair of paired punctures  or puncture paired with a  boundary component connecting the vertices corresponding to the punctured fatgraphs (which may coincide) on its two sides; for each unpaired puncture (which includes both unpaired punctured vertices and unpaired boundary components), we add yet another edge connecting a univalent $\circ$-vertex to the vertex of the associated punctured fatgraph.  For every such stratum graph $\cG_{\bar G}$, we uniquely orient a subset of its edges to obtain a {\em partially oriented stratum graph} $\vec{\cG}_{\bar G}$, as follows:  for edges corresponding to a pairing of a decorated with an undecorated puncture, we orient from the decorated towards the undecorated puncture; for edges corresponding to unpaired decorated punctures, we orient towards the unpaired $\circ$-vertex and all other edges are left unoriented.
\end{definition}

In other words, given $\cG_{\bar G}$, we orient edges in the direction of decreasing level in filtered screens in the preimage $\pi^{-1}(\bar G)$; as a result, there do not exist oriented cycles in $\vec{\cG}_{\bar G}$, and every $\bullet$ vertex is the initial point for an oriented edge, as every component fatgraph of $\bar G$ has a decorated puncture.  It is then clear that every point $\bar G \in \widehat T(F)$ is a point in a product of projectivized partially decorated Teichm\"uller spaces, where the punctures decorated at each irreducible component are determined by the outgoing oriented edges at the associated vertex in the partially oriented stratum graph $\vec{\cG}_{\bar G}$; see Figure \ref{fig:PartOrientStratumGraphDec} for an example.  Every partially oriented stratum graph $\vec{\cG}$ yields in this manner a product of projectivized partially decorated Teichm\"uller spaces, and this leads us to the final lemma in this section.

\begin{figure}[htbp]
	\centering
		\includegraphics[width=0.65\textwidth]{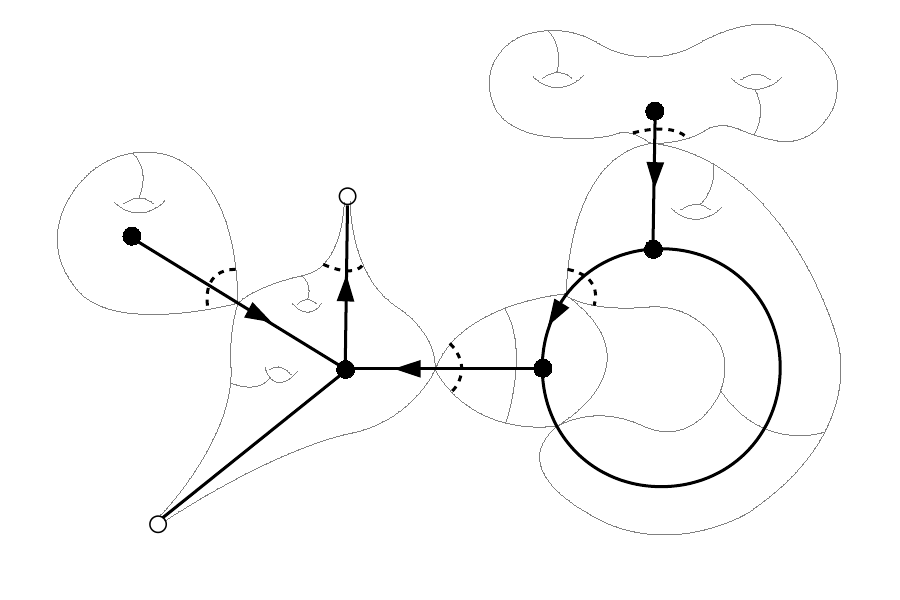}
	\caption{{\small An example of a partially oriented stratum graph $\vec{\cG}$ along with the decorated punctures at each irreducible component indicated by dashed arcs, and corresponding to outgoing edges.}}
	\label{fig:PartOrientStratumGraphDec}
\end{figure}

\begin{lemma}
\label{lem:prodpartproj}
Let $\vec{\cG}$ be a partially oriented stratum graph; then every point in the associated product of projectivized partially decorated Teichm\"uller spaces is in fact a point in $\widehat T(F)$. 
\end{lemma}

\begin{proof}
Fixing a point in the product of projectivized partially decorated Teichm\"uller spaces associated with $\vec{\cG}$ yields a punctured fatgraph with partial pairing, which we call $\bar G$.  Take $\bar{G}$, and resolve all punctured vertices and pairings, making choices, using the operations in Figure \ref{fig:ResolutionsB}, to obtain a fatgraph $G$ representing $F$.  Note that the partial orientation condition associated to $\vec{\cG}$ precludes paired decorated punctures since oriented edges yield a decoration only in the irreducible component for which that orientation is outgoing.  Thus, the resolutions in Figure \ref{fig:ResolutionsB} are exhaustive.

\begin{figure}[htbp]
	\centering
		\includegraphics[width=0.65\textwidth]{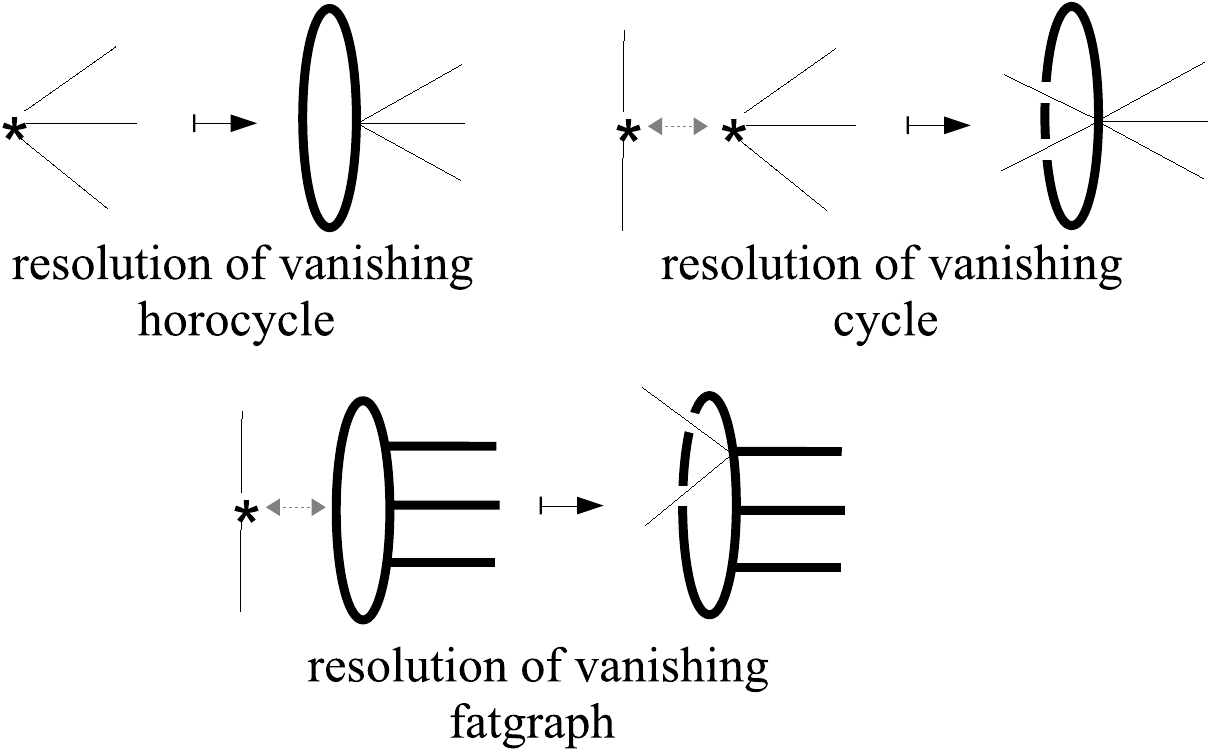}
	\caption{{\small Chosen resolutions of pairings and punctured vertices of $\bar{G}$ to obtain a fatgraph $G$.}}
	\label{fig:ResolutionsB}
\end{figure}

This chosen resolution produces a fatgraph $G$ for $F$. To determine the level structure for the filtered screen on $G$, choose a nest $f$ compatible with $\vec{\cG}$, and do the following:  For each edge $e$ in $G$ which is also in $\bar{G}$, the level of $e$ will be the $f$-level of edges in $\textrm{Min}_f(v_i)$, where $v_i$ is the $\bullet$ vertex representing the irreducible component $F_i$ containing $e \in \bar G$.  Furthermore, for edges of $G$ in horocycles or cycles resulting from the resolution of $\bar{G}$, assign the $f$-level associated to the corresponding edge in $\vec{\cG}$.  The claim is that this level structure yields a filtered screen, and if the maximum level is $n$, it suffices to show that $L^{\geq k}$ is recurrent for all $k = 1,\cdots,n$. 
To see this, fix $k$ and consider a component of $L^k$; if this  
component is a horocycle, cycle, or fatgraph, then it is recurrent.   
Otherwise, the edges in the component of $L^k$ appear in $\bar G$ as a  
punctured fatgraph with punctured vertices, corresponding to an  
irreducible component $F_i$.  However in $G$, these punctured vertices  
no longer appear, and edges incident on such punctured vertices of  
$\bar G$ are now incident on a vertex of $G$ in either a vanishing  
horocycle, cycle, or fatgraph, all of which are of greater level, and  
hence in $L^{\geq k}$.
Thus, $L^{\geq k}$ is recurrent, and the level structure yields a filtered screen $\vec{\cE}$.
  
We need to determine correct coordinates for a point $p \in \cC(\vec{\cE})$ so that $\pi(p)=\bar G$.  The  deprojectivized coordinates for any horocycle or cycle in $G$ resulting from a resolution can simply be assigned arbitrarily.  For all other edges, the deprojectivized coordinates are identical to those in $\bar G$; now projectivize in comparability classes in the level structure of $\vec{\cE}$ to obtain $p \in \cC(\vec{\cE})$ for which $\pi(p)=\bar G$.
\end{proof}

%-----------------------------------------------------------------------
\subsection{CW decomposition and strata of $\widehat T(F)$}
\label{subsec:celldecompstrata}

We now come to the last of our main results, which is an $MC(F)$-invariant CW decomposition of $\widehat T(F)$ which follows directly from the above analysis.

\begin{theorem}
\label{thm:maincelldecomp}
$\widehat T(F)$ admits an $MC(F)$-invariant CW decomposition with the following properties:

\begin{itemize}
	\item Cells are in one-to-one correspondence with isotopy classes of punctured fatgraphs with partial pairing $\bar G$ supported by $F$ for which $\vec{\cG}_{\bar G}$ is a partially oriented stratum graph.
	\item Each cell is a product of simplices, where projectivized simplicial coordinates on the component punctured fatgraphs give barycentric coordinates for the factor simplices.
	\item The union of cells with the same underlying partially oriented stratum graph is itself a product of projectivized partially decorated Teichm\"{u}ller spaces, one for each irreducible component; we shall refer to these as the {\em strata} of $\widehat T(F)$.
	\item The face relation for cells within a stratum is generated by collapsing edges (of punctured fatgraph components) with distinct endpoints, at most one of which is punctured.
	\item The face relation for cells in different strata is generated by vanishing of quasi recurrent subsets within an irreducible component $F_i$, followed by the map $\pi$ applied to the resulting filtered screen in $\cF\cS(F_i)$.
\end{itemize}
\end{theorem}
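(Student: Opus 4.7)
The plan is to lift the CW structure on $\cF\cS(F)$ from Definition \ref{defn:spaceoffilteredscreens} to the quotient $\cP\cG(F)$ via the map $\pi$, using the surjectivity statements of Lemmas \ref{lemma:chisurjects} and \ref{lemma:psiproperties} and the nest $f_{\bar G}$ constructed in Lemma \ref{lemma:canonicalnest2} to verify the indexing by realizable partially oriented stratum graphs. For each isotopy class of punctured fatgraph with partial pairing $\bar G$ supported by $F$, I would declare its open cell in $\cP\cG(F)$ to be the set of all weighted isotopy classes sharing the combinatorial type $\bar G$, with positive weights on edges projectivized separately per component fatgraph; this is automatically a product of open simplices, which gives property (2) at once.

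The first substantive task is the bijection in property (1). Every $\bar G \in \cP\cG(F)$ lies in the image of $\chi$ by Lemma \ref{lemma:chisurjects}, and the construction of $\chi$ together with Lemma \ref{lemma:canonicalnest2} shows that $\vec{\cG}_{\bar G}$ admits the compatible nest $f_{\bar G}$ and is therefore realizable. Conversely, given a realizable $\vec{\cG}_{\bar G}$ with a compatible nest, Lemma \ref{lemma:nestsinfibercompatible} produces a point in $\hat T(F)$ mapping to $\bar G$ under $\chi$, and then the surjectivity of $\psi$ (Lemma \ref{lemma:psiproperties}) yields a preimage point in $\cF\cS(F)$, whence $\bar G$ is indeed realized as $\pi(p)$ for some $p$.

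For the stratum structure (property 3), I would fix a partially oriented stratum graph $\vec{\cG}$ and consider all $\bar G$ with $\vec{\cG}_{\bar G} = \vec{\cG}$. In each irreducible component $F_i$, condition (i) of Lemma \ref{lemma:charorient} guarantees a non-empty decoration set $\cP_i$, namely those punctures corresponding to edges of $\vec{\cG}$ oriented away from the vertex representing $F_i$; the component fatgraphs over all such $\bar G$ range over all punctured fatgraphs dual to q.c.d.'s of $F_i$ based at $\cP_i$. By Theorem \ref{thm:celldecompdecTeich} applied componentwise, this collection assembles into $\prod_i P\widetilde{T}_{\cP_i}(F_i)$, as required. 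Property (4) is then the intra-stratum face relation of Theorem \ref{thm:celldecompdecTeich} applied componentwise, which collapses an edge of a component fatgraph with distinct endpoints at most one of which is punctured.

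Property (5) encodes the two face-generating moves of Definition \ref{defn:spaceoffilteredscreens} in $\cF\cS(F)$, restricted to those that alter the underlying partially oriented stratum graph: the vanishing of simplicial coordinates on a quasi recurrent subset isolated within a single level, which under Construction \ref{constructit} and the map $\pi$ introduces a new node in the stratum graph. The main obstacle, and the bookkeeping that most deserves care, is verifying that the face relations across strata are well-defined after passing to the quotient by $\sim$, since distinct filtered screens $\vec{\cE}$ on distinct fatgraphs $G$ can map to the same $\bar G$ under $\pi$. Here I would invoke the h-length calculation following Construction \ref{constructit} to show that the factor simplices of $\cC(\vec{\cE})$ corresponding to edges absorbed into vanishing horocycles or cycles are collapsed by $\pi$ to a single point per cycle, so that the quotient cell decomposition agrees on overlaps and is genuinely a CW structure. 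Finally, $MC(F)$-invariance is automatic since mapping classes act by push-forward of isotopy classes, weights, and partial pairings, and the entire construction commutes with this action.
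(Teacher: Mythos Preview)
Your proposal is correct and follows essentially the same route as the paper: both assemble $\cP\cG(F)$ into strata indexed by realizable partially oriented stratum graphs via the surjectivity of $\chi$, invoke the componentwise cell decomposition of projectivized partially decorated Teichm\"uller space (Theorem~\ref{thm:celldecompdecTeich}) for the intra-stratum structure, and read off the inter-stratum face relation from the level-one coalescing move in $\cF\cS(F)$ pushed through $\pi$. Your version is slightly more explicit in invoking $\psi$-surjectivity and Lemma~\ref{lemma:nestsinfibercompatible} for the converse direction of property~(1), and in flagging the well-definedness of the quotient face relations, but these are elaborations of exactly the argument the paper sketches by pointing back to \S\ref{subsec:equivalenceonfiltscreens} and \S\ref{subsec:mapdecaugteichtofiltscreensmod}.
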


\begin{proof}
We see from Lemma \ref{lem:prodpartproj} that $\widehat T(F)$ is composed of strata of products of projectivized partially decorated Teichm\"{u}ller spaces, where a particular stratum has an underlying stratum $T_\sigma$ from augmented Teichm\"{u}ller space but with a choice of decorated subset $\cP_i$ of the punctures on each irreducible component $F_i$.  We may label these strata by $\widetilde{T}_{\cup \cP_i}(F_\sigma)$, where the $F_\sigma$ indicates the underlying stratum in augmented Teichm\"{u}ller space, and the $\cup \cP_i$ indicates the decorated punctures on each irreducible component $F_i$.  

However, the subset of punctures which are decorated cannot be chosen arbitrarily but must be chosen consistently with a partially oriented stratum graph $\vec{\cG}$ that supports a compatible nest.  Nevertheless, as long as these conditions are satisfied, each stratum admits a cell decomposition, where cells are in one-to-one correspondence with isotopy classes of punctured fatgraphs with partial pairing compatible with the underlying partially oriented stratum graph.  Each cell is a product of simplices, where simplicial coordinates on the component punctured fatgraphs give barycentric coordinates for the factor simplices.  The face relation for cells within a stratum is generated by collapsing edges with distinct endpoints (only one of which may be punctured) in the punctured fatgraphs associated to irreducible components.    

Moreover, since the face relation on $\cF\cS(F)$ is also generated by coalescing of adjacent levels in filtered screens using the fact that $\widehat T(F)$ is then obtained by a quotient of $\cF\cS(F)$ via the map $\pi$, we see that the face relation for cells in different strata in $\widehat T(F)$ is generated by way of $\cF\cS_{\cP_i}(F_i)$ and $\pi$ for each irreducible component $F_i$ in $F_\sigma$.  More specifically, given a cell $\cC(E_1) \times \cdots \times \cC(E_m)$ in a stratum $\widetilde{T}_{\cup \cP_i}(F_\sigma)$, where $F_\sigma$ has irreducible components $F_1,\cdots,F_m$, we consider a stable path $\bar X_t$ which only varies simplicial coordinates within one factor $\cC(E_j)$ and whose limiting screen $\vec{\cE}(\bar X_t)$ is of total level one.  There will then be a limiting point $p(\bar X_t) \in \cC(\vec{\cE}(\bar X_t)) \subset \cF\cS_{\cP_i}(F_i)$ to which we may apply $\pi$ to obtain a new labeled punctured fatgraph with partial pairing; this will yield a point in a new cell in a new stratum where the $\cC(E_j)$ factor has been replaced by a  product $\cC(E_{j_1}) \times \cdots \times \cC(E_{j_k})$.  If the stable path simply yielded a new vanishing horocycle, we have moved from $\widetilde{T}_{\cup \cP_i}(F_\sigma)$ to $\widetilde{T}_{\cup \cP^\prime_i}(F_\sigma)$, where the underlying stratum in augmented Teichm\"{u}ller space has stayed fixed, but we have changed the collection of punctures which are decorated and hence have changed the underlying partially oriented stratum graph without changing the stratum graph.  However, if the stable path yields a vanishing cycle or fatgraph, then we move to $\widetilde{T}_{\cup \cP^\prime_i}(F_{\sigma^\prime})$, where the underlying stratum in augmented Teichm\"{u}ller space itself has changed.

Finally, we observe that the natural action of $MC(F)$ on $\widehat T(F)$ takes cells to cells, whence the decomposition is $MC(F)$-invariant.
\end{proof}


\begin{thebibliography}{}

\bibitem{AC}
 E.\ Arbarello and M.\ Cornalba, {\em Combinatorial and algebro-geometric
 cohomology classes on the moduli spaces of curves},  { J.\ Algebraic Geom.} {\bf 5}
(1996),  705-749.


\bibitem{[Bers]} L. Bers, {\em Spaces of degenerating Riemann surfaces}, 43--55, Ann.\ of Math.\ Studies No. 79, Princeton Univ.\ Press (1974).

\bibitem{[Brock]} J. Brock, {\em The Weil-Petersson metric and volumes of 3-dimensional hyperbolic convex cores}, J.\ Amer.\ Math.\ Soc.\ {\bf 16} (2003), 495--535.

\bibitem{DM}
P. Deligne and D. Mumford, {\em Irreducibility of the space of curves of a given genus},  Inst.\ Hautes \'Etudes Sci.\ Publ.\ Math.\ {\bf 36} (1979), 7--110.

\bibitem{EP} D.\ B.\ A.\ Epstein and R.\ C. Penner, {\em Euclidean decompositions of noncompact hyperbolic manifolds}, J.\ Diff.\ Geom.\ {\bf 27} (1988),  67--80.

\bibitem{Gadgil} S. Gadgil, {\em A triangulation of a homotopy-Deligne-Mumford compactification of the Moduli of curves}, e-print at arXiv:1004.1498.


\bibitem{Harer86} J.\ Harer,
{\em The virtual cohomological dimension of the mapping 
class group
of an orientable surface},
Invent.\ Math.\ {\bf 84}, (1986), 157--176.


\bibitem{HZ}
J. Harer and D. Zagier, {\em The Euler characteristic of the moduli space of curves} Invent.\
Math.\ {\bf 85} (1986), 457--485. 

\bibitem{Igusa1}
K.\ Igusa,
{\em Combinatorial Miller-Morita-Mumford classes and Witten cycles},
{Alg.\ Geom.\ Top.} {\bf  4}
(2004), 473--520.

\bibitem{Igusa2}
---,
{\em Graph cohomology and Kontsevich cycles},
{Topology} {\bf  43} (2004),
1469-1510.

\bibitem{Knudsen}
F. Knudsen,
{\em The projectivity of the moduli space of stable curves, II:  the stacks $M_{g,n}$}, 
{Math.\ Scand.} {\bf 52} (1983), 161--199.

\bibitem{Kontsevich}
M.\ Kontsevich,
{\em Intersection theory on the moduli space of curves and the matrix Airy
function},
{Commun.\ Math.\ Phys.}  {\bf 147}
(1992), 1-23.


\bibitem{[L]} E. Looijenga, {\em Cellular decompositions of compactified moduli spaces of pointed curves}, The moduli space of curves (Texel Island, 1994), Birkh\"{a}user, Boston (1995), pp. 369--400.

%\bibitem{MadWei}
%I. Madsen and M. Weiss, {\em The stable mapping class group and stable homotopy theory} in
%{\em European Congress of Mathematics}, Stockholm 2004, Eur. Math. Soc., Z�rich, 2005,
%283�307. 

\bibitem{[MP]} G. McShane and R.C. Penner, {\em Stable curves and screens on fatgraphs}, in {\em Proceedings of the Research Institute for Mathematics}, Kyoto University (2007).

%\bibitem {Miller}E. Miller, {\em The homology of the mapping class group}. J.\ Differential Geom.\ {\bf 24} (1986), 1-14.

%\bibitem{mondello-tesi} G.\ Mondello, Combinatorial classes on the moduli space of curves are tautological,
%Scuola Normale Superiore di Pisa (2003).


\bibitem{Mondello} G. Mondello, 
{\em Combinatorial classes on the moduli space of Riemann surfaces are tautological}, Int.\ Math.\ Res.\ Not.\  (2004),  no. 44, 2329-2390. 

%\bibitem{Morita}S. Morita, {\em Characteristic classes of surface bundles}, Invent.\ Math.\ {\bf 90} (1987), 551�577. 

%\bibitem{Mumford}
%D. Mumford, {\em Towards an enumerative geometry of the moduli space of curves} in: {Arithmetic
%and geometry} {\bf 2}, Progr\. Math.\ {\bf 36}, Birkh�user, Boston, MA, 1983. 271�328. 


\bibitem{[P]}  R.C. Penner, {\em The decorated Teichm\"{u}ller space of punctured surfaces}, Commun.\ Math.\ Phys.\ {\bf 113} (1987), 299--339.

\bibitem{Pert}
---,
{\em Perturbative series and the moduli space of Riemann surfaces.} J.\ Differential Geom.\
{\bf 27} (1988), 35--53.

\bibitem {Pen93b}
---,
{\em The Poincar\'e dual of the Weil-Petersson K\"ahler form},
{Comm.\ Anal.\  Geom.}  {\bf 1} (1993), 
43-70.


%\bibitem{[Parc]} ---,
%{\rm The structure and singularities of quotient arc complexes}, J.\ Top. {\bf 1} (2008), 527-550.

\bibitem{[P3]} ---, {\em Decorated Teichm\"{u}ller theory}, European Mathematical Society, Z\"urich (2011).

% \bibitem{Ulrike} U. Tillmann, {\em Strings and the stable cohomology of mapping class groups} in {\em Proceedings
%of the International Congress of Mathematicians}, Higher Ed. Press, Beijing, 2002,
%447�456.

\bibitem{[W]} S. Wolpert, {\em Geometry of the Weil-Petersson completion of Teichm\"{u}ller space}, Surv.\ Diff.\ Geom.\ {\bf 8} (2002), 357--393.

\bibitem{Zuniga} J. Z\'u\~niga, {\em  Compactifications of Moduli Spaces and Cellular Decompositions}, Algeb. Geom. Topol. \textbf{15}, 1--41 (2015).

\end{thebibliography}
\end{document}